\newtheorem{theorem}{Theorem}[section]
\providecommand{\customgenericname}{}
\newcommand{\newcustomproblem}[2]{%
	\newenvironment{#1}[1]
	{%
		\renewcommand\customgenericname{#2}%
		\renewcommand\theinnercustomgeneric{##1}%
		\innercustomgeneric
	}
	{\endinnercustomgeneric}
}
\newcommand*{\bqed}{\hfill\ensuremath{\blacksquare}}%
\newcommand{\vertiii}[1]{{\left\vert\kern-0.25ex\left\vert\kern-0.25ex\left\vert #1 
		\right\vert\kern-0.25ex\right\vert\kern-0.25ex\right\vert}}
\def\dd{\, \mathrm{d}}
\newcommand{\addresseshere}{%
	\enddoc@text\let\enddoc@text\relax
}
\begin{document}
	
	
	\title[Mixed finite element methods for fourth order obstacle problems in elasticity]{Geometrical mixed finite element methods for fourth order obstacle problems in linearised elasticity}
	

	\author[Paolo Piersanti]{Paolo Piersanti}
	\address{P.P. School of Science and Engineering, The Chinese University of Hong Kong (Shenzhen), 2001 Longxiang Blvd., Longgang District, Shenzhen, China}
	\email[Corresponding author]{ppiersanti@cuhk.edu.cn}
	
	\author[Tianyu Sun]{Tianyu Sun}
\address{T.S. Department of Mathematics and Institute for Scientific Computing and Applied Mathematics, Indiana University Bloomington, 729 East Third Street, Bloomington, Indiana, USA}
\email{ts19@iu.edu}

\today

\begin{abstract}
This paper introduces a mixed Finite Element Method aimed at approximating solutions to fourth-order variational problems with constraints.

We first address the biharmonic obstacle problem and propose an error convergence framework that offers an alternative to the established approaches by Ciarlet \& Raviart and Ciarlet \& Glowinski. Our method emphasises improved ease of numerical implementation, which may enhance practical usability.

Next, we investigate a two-dimensional variational problem involving linearly elastic shallow shells constrained within a specified half-space. We begin with cases where the middle surface has non-zero curvature and demonstrate that applying a mixed Finite Element Method with conforming elements requires a symmetry constraint on the gradient matrix of the dual variable. Notably, we find that this implementation cannot rely solely on Courant triangles, indicating a variation in approach based on the geometric features of the problem. This constitutes a counterexample to the statement that \emph{solutions of fourth order linear problems can be approximated by solely resorting to Courant triangles if one considers the mixed formulation of the original problem}. We notice that this counterexample arises in connection with the lack of rigidity of linearly elastic shallow shells middle surface.

In cases where the middle surface is flat, we observe that the symmetry constraint is not necessary, allowing for the use of Courant triangles alone for solution approximation. This observation suggests that shell geometry can significantly influence the selection of Finite Element methods for discretization.

Our theoretical findings are supported by a series of supplementary numerical experiments, illustrating the practicality of the proposed methods.

\smallskip

\noindent \textbf{Keywords.} Obstacle problems $\cdot$ Variational Inequalities $\cdot$ Elasticity Theory $\cdot$ Penalty Method $\cdot$ Finite Element Method

\smallskip
\noindent \textbf{MSC 2010.} 35J86, 47H07, 65M60, 74B05.
\end{abstract}

\maketitle

\tableofcontents

\section{Introduction}
\label{sec0}

Fourth order variational problems appear in a manifold of applications like, for instance, elasticity~\cite{CGN+2021,GWX19,Lewicka2023}, Fluid-Structure-Interaction~\cite{GPP2024} and phase transition problems~\cite{MW2019,Miranville2019}. It is possible that the solutions of these models might have to satisfy certain constraints of analytical, physical or geometrical nature, thus rendering the problems under consideration obstacle problems.

In this paper, we are interested in the numerical approximation of obstacle problems in linearised elasticity modelled by means of fourth order differential operators. In particular, we are interested in the numerical approximation of the solutions of two-dimensional models describing the deflection of thin films subjected to remaining confined in a convex subset of the three-dimensional Euclidean space. In general, the models we will be considering are recovered upon completion of a rigorous asymptotic analysis departing from the standard energy of three-dimensional linearised elasticity. The mathematical justification of obstacle problems in linearised elasticity via a rigorous asymptotic analysis was initiated in~\cite{Leger2008,Leger2010} in the case where the elastic structure under consideration is a linearly elastic shallow shell. However, the constraint considered in the latter two papers was only affecting the \emph{transverse} component of the displacement field. The mathematical justification of obstacle problems for linearly elastic shells and their numerical analysis was then continued in the papers~\cite{CiaPie2018b,MeiPie2024,Pie-2022-interior,Pie2023,Rodri2018,WSSY24,WSYZ24}.

For a different approach and different contact conditions from the one considered in this paper, we refer the reader to~\cite{CR19,CCRR21,Rodri2018,RC18,RC18-2}. In this direction, we also mention the recent papers on mathematical biology, which are modelled by means of contact mechanics~\cite{BDTT16,HT17}.

The discretisation of an \emph{unconstrained} fourth order biharmonic variational problem via a Finite Element Method of mixed type was proposed by Ciarlet \& Raviart~\cite{CiaRav74} and Ciarlet \& Glowinski~\cite{CiaGlo75}. The mixed Finite Element Method proposed in~\cite{CiaRav74} discretises the solution of the biharmonic equation by introducing a finite-dimensional space which is in general \emph{not} a subspace of the vector space where the mixed formulation is posed (cf., e.g., page~382 of~\cite{PGCFEM}) and, moreover, the \emph{primal variable} and the \emph{dual variable} appearing in the mixed formulation are, in general, not independent of each other (cf., e.g., the remark after Theorem~7.1.4 of~\cite{PGCFEM}). Besides, the actual computation of the pair solving the discrete mixed problem is carried out by means of the Uzawa Method (cf., e.g., the Introduction of Chapter~7 in~\cite{PGCFEM} for a detailed description).

It seems that the method proposed in~\cite{CiaRav74} is not applicable to more complex fourth order models where mixed second derivatives appear. Moreover, it is not clear if the implementation by Uzawa's Method discussed in~\cite{CiaGlo75} is applicable to  more complex fourth order models for which the energy minimisers are required to comply with a certain constraint.

The existing literature about the approximation of the solution of obstacle problems governed by fourth order operators is mainly based on Morley Finite Elements, Hsieh-Clough-Tocher Finite Elements, and Enriching operators~\cite{Brenner2013,CarKol2017,PS}. The applicability of the methods based on Enriching Operators seem, in particular, to be limited to scalar fourth order obstacle problems~\cite{Brenner2013} and vector-valued fourth order obstacle problems in elasticity where the constraint hinges on the sole transverse component of the solution~\cite{PS}. For the sake of completeness, we also mention the recent paper~\cite{LYC25}, where the authors studied a multi-scale Finite Element Method for a second order problem in elasticity, and the papers~\cite{DEP09,DEP10,DEP11,DEP11-2}, where the authors studied second order time-dependent Signorini problems with contact and collisions.

The objective of this paper amounts to proposing new mixed Finite Element Methods for approximating the solution of more general constrained variational problems governed by fourth order differential operators. The main novelties introduced in this paper are the following:
\begin{itemize}
	\item[$(1)$] The first novelty presented in this paper amounts to proposing a new \emph{conforming} mixed Finite Element Method for approximating the solution of the biharmonic model via Courant triangles. This methodology both applies to the standard unconstrained problem considered in Chapter~7 in~\cite{PGCFEM} and to the corresponding constrained fourth order model. Besides, differently from~\cite{Brenner2013}, the techniques here presented do not hinge on the exploitation of Morley's triangles. After relaxing the original problem by introducing penalty terms associated with the violation of the constraint and with the relation between the primal variable and the dual variable, we derive error estimates for the approximate solutions and we investigate the robustness of the algorithm with respect to the parameters there considered. 
	
	\item[$(2)$] The second novelty we present in this paper concerns the discretisation of an obstacle problem for linearly elastic shallow shells, where the constraint amounts to restraining the set of admissible solutions to those displacement fields for which the deformed reference configuration of the shell remains confined in a prescribed convex subset of the three-dimensional Euclidean space. After relaxing the original problem by introducing penalty terms associated with the violation of the constraint and with the relation between the primal variable and the dual variable, we derive error estimates for the approximate solutions and we investigate the robustness of the algorithm with respect to the parameters there considered.
	
	In particular, we are able to show that if the middle surface of the linearly elastic shallow shell under consideration is flat, in the sense that the curvature is identically zero in the definition domain of the surface parametrisation, then it is possible to propose a mixed formulation based on Courant triangles akin to the one we considered for the biharmonic obstacle problem.
	
	\item[$(3)$] The third novelty we present amounts to observing that if the middle surface of the linearly elastic shallow shell under consideration is such that its curvature is not identically zero, then it is \emph{not} possible to carry out a discretisation via Courant triangles (or any other Finite Elements of Lagrange type). We observe in this fashion that this pathology appears to be connected with the apparent lack of a \emph{rigidity theorem} for linearly elastic shallow shells.
\end{itemize}

Apart from the \emph{per se} theoretical interest, the nature of the \emph{counterexample} constituting the third novelty presented in this paper should hint at the importance $\mathcal{C}^1$ Finite Elements, which are available, to-date, only for a limited number of libraries\footnote{For a complete overview of the features of the most popular Finite Element Analysis libraries, see the website \href{https://en.wikipedia.org/wiki/List_of_finite_element_software_packages}{\texttt{https://en.wikipedia.org/wiki/List_of_finite_element_software_packages}}}.

The paper is divided into four sections, including this one, in the remainder of which the basic notation will be recalled.
In section~\ref{sec1}, we present a new Finite Element Method for the prototypical biharmonic obstacle problem. The new method we are going to discuss also works as an alternative approach to the discretisation of the biharmonic equation proposed first by Ciarlet \& Raviart~\cite{CiaRav74}, and then complemented by Ciarlet \& Glowinski~\cite{CiaGlo75}.

In section~\ref{sec2}, we consider a fourth-order obstacle problem for linearly elastic shallow shells whose middle surface is, in general, not flat and that are  constrained to remain confined in a prescribed half-space. We duly exhibit the consequences of the aforementioned \emph{lack of rigidity} in the choice for the Finite Element Method for approximating the solution to this model.

In section~\ref{sec3}, we consider the fourth order obstacle problem for linearly elastic shallow shells presented in section~\ref{sec2}, and we assume that the middle surface of the linearly elastic shallow shell under consideration is flat. We show that, in the case of this special geometry, rigidity is \emph{regained}, and the constraint according to which the gradient matrix of the dual variable has to be symmetric can thus be dropped. As a result, the discretisation of the penalised mixed formulation can be performed by standard Courant triangles.

In section~\ref{sec1-bis}, we present numerical results corroborating the theoretical results obtained in section~\ref{sec1}. Numerical experiments for the shallow shell model with flat surface presented in the form of supplementary material (Appendix~\ref{sec4}). The execution time of the second experiment for the model discussed in section~\ref{sec1} becomes extensively long when the mesh becomes finer. Therefore, performing the second experiment for the model presented in section~\ref{sec3} in the case of a general applied body force becomes challenging, as it is not in general straightforward to exhibit a manufactured solution and, moreover, the execution time increases.

For an overview about the classical notions of differential geometry used in this paper see, e.g., \cite{Ciarlet2000} or~\cite{Ciarlet2005} while, for an overview about the classical notions of functional analysis used in this paper see~\cite{PGCLNFAA}.

Latin indices, except $h$, take their values in the set $\{1,2,3\}$ while Greek indices, except $\nu$, $\kappa$ and $\varepsilon$, take their values in the set $\{1,2\}$. The Einstein summation convention is systematically used in conjunction with these two rules unless differently specified.
The notation $\mathbb{E}^3$ indicates the three-dimensional Euclidean space.
We denote by $\delta_{\alpha\beta}$ the standard Kronecker symbol.
Given an open subset $\Omega$ of $\mathbb{R}^n$, where $n \ge 1$, we denote the usual Lebesgue and Sobolev spaces by $L^2(\Omega)$, $H^1(\Omega)$, $H^1_0(\Omega)$, $H^2(\Omega)$, or $H^2_0(\Omega)$. Given a set $X$, the Cartesian product $X \times X$ is denoted by $\vec{X}$, while the Cartesian product $X\times X \times X$ is denoted by $\bm{X}$. In what follows, the abbreviations ``a.a.'' and ``a.e.'' stand for \emph{almost all} and \emph{almost everywhere}, respectively. Spaces of symmetric tensors are denoted in blackboard bold letters like for instance $\mathbb{L}^2(\Omega)$.

In order to favour the readability of this article, we resort to the notation $c_P$ to denote the positive constant of the Poincar\'e-Friedrichs inequality and its variants.

In all what follows, we denote by $\omega \subset \mathbb{R}^2$ a Lipschitz domain, namely a non-empty bounded open connected subset of $\mathbb{R}^2$ with Lipschitz continuous boundary $\gamma:=\partial \omega$ and such that $\omega$ is all \emph{on the same side of $\gamma$} (cf., e.g., Section~1.18 in~\cite{PGCLNFAA}). Let $y=(y_\alpha)$ denote a generic point in $\overline{\omega}$ and let $\partial_{\alpha}:=\partial/\partial y_\alpha$ and $\partial_{\alpha\beta}:=\partial^2/\partial y_\alpha \partial y_\beta$.
In what follows $\vec{\nu}$ denotes the outer unit normal vector field to the boundary $\gamma$ and $\partial_{\vec{\nu}}$ denotes the outer unit normal derivative operator along $\gamma$.

\section{A prototypical example of fourth order obstacle problem. The biharmonic model}
\label{sec1}

The first problem we study is a prototypical variational problem modelling the deflection of a linearly elastic plate. Let the function $f\in L^2(\omega)$ be given.

The free energy functional associated with the model under consideration is denoted by $J:H^2_0(\omega) \to \mathbb{R}$, and is defined by:
\begin{equation*}
J(v):=\dfrac{1}{2}\int_{\omega} |\Delta v|^2 \dd y -\int_{\omega} f v \dd y.
\end{equation*}

Let us recall that the bilinear form $a:H^2_0(\omega) \times H^2_0(\omega) \to \mathbb{R}$ defined by
\begin{equation*}
	a(u,v):=\int_{\omega}(\Delta u)(\Delta v) \dd y, \quad\textup{ for all }u,v \in H^2_0(\omega),
\end{equation*}
is continuous, symmetric, and $H^2_0(\omega)$-elliptic thanks to the Poincar\'e-Friedrichs inequality for fourth order operators (cf., e.g., Theorem~6.8-4 in~\cite{PGCLNFAA}). Also recall that the linear form $\ell:H^2_0(\omega) \to \mathbb{R}$ defined by
\begin{equation*}
	\ell(v):=\int_{\omega} f v \dd y,\quad\textup{ for all }v \in H^2_0(\omega),
\end{equation*}
is continuous.

Let $\theta:\overline{\omega} \to \mathbb{R}$ be a given function such that $\theta \in H^1(\omega) \cap \mathcal{C}^0(\overline{\omega})$ and such that $\theta(y)<0$, for all $y\in\overline{\omega}$. The given function $\theta$ represents the \emph{obstacle} that the deformed plate does not have to cross. In order to determine the equilibrium configuration of a plate subjected not to cross the given function $\theta$, we must restrict our search for solutions to functions $v \in H^2_0(\omega)$ such that $v(y) \ge \theta(y)$, for all $y \in \overline{\omega}$, and we note that the constraint has to hold everywhere up the boundary since $H^2_0(\omega) \hookrightarrow\hookrightarrow \mathcal{C}^0(\overline{\omega})$ in light of the Rellich-Kondra\v{s}ov Theorem (cf., e.g., Theorem~6.6-3 in~\cite{PGCLNFAA}).

The latter considerations imply that we need to restrict our search for solutions to the functions in the following non-empty, closed and convex subset of $H^2_0(\omega)$, denoted and defined by:
\begin{equation*}
	U:=\{v \in H^2_0(\omega); v \ge \theta \textup{ in }\overline{\omega}\}.
\end{equation*}

It is thus classical to establish (cf., e.g., Section~6.8 in~\cite{PGCLNFAA}) that the following minimisation problem
\begin{equation}
	\label{mp1}
	\min_{v \in U} J(v),
\end{equation}
admits a unique solution $u\in U$. Recall that, according to~\cite{Frehse1971}, the solution of the minimisation problem~\eqref{mp1} satisfies $u \in H^3(\omega) \cap H^2_0(\omega)$.
Let us recall that a regularity of class $H^4(\omega)$ is \emph{not} in general achievable in light of the counterexample contrived by Caffarelli and his collaborators for fourth order obstacle problems~\cite{Caffarelli1979,CafFriTor1982}.

Since $\omega$ is a Lipschitz domain and since any $v \in U$ satisfies $\nabla v \in \vec{H}^1_0(\omega)$, we have that for \emph{fixed} (no summation) indices $\alpha, \beta \in \{1,2\}$, an application of Green's formula (cf., e.g., Theorem~1.18-2 in~\cite{PGCLNFAA}) gives:
\begin{equation*}
	\int_{\omega} (\partial_{\alpha\alpha} u) (\partial_{\beta\beta} v) \dd y=-\int_{\omega}(\partial_{\alpha\alpha\beta} u) (\partial_{\beta} v) \dd y = \int_{\omega}(\partial_{\alpha\beta} u) (\partial_{\alpha\beta} v) \dd y.
\end{equation*}

The minimisation problem~\eqref{mp1} can thus be equivalently formulated in terms of the following variational inequalities.

\begin{customprob}{$\mathcal{P}$}
	\label{problem1}
	Find $u \in U$ satisfying:
	\begin{equation*}
		\int_{\omega} (\partial_{\alpha\beta} u) \partial_{\alpha\beta}(v-u) \dd y \ge \int_{\omega}f (v-u) \dd y,
	\end{equation*}
	for all $v \in U$.
	\bqed
\end{customprob}

The numerical approximation of the solution for Problem~\ref{problem1} was recently studied in~\cite{Brenner2013}, where the authors exploited the properties of Enriching Operators to derive the error estimates for the difference between the exact solution and a suitable Finite Element approximation. The strategy in~\cite{Brenner2013} made use of Morley's triangles to derive the error estimates. The apparent drawback in the implementation of this strategy lies in the fact that Morley's triangles are not implemented in a number of Finite Element libraries.

A generalisation of the strategy presented in the paper~\cite{Brenner2013} was recently proposed in~\cite{PS}, where we studied a numerical method - based on Enriching Operators - for approximating the solution of an obstacle problem for linearly elastic shallow shells. Note, however, that the proofs in~\cite{PS} only work if the constraint is imposed on the sole transverse component of the solution of the problem under consideration, which represents the displacement of the linearly elastic shallow shell involved in the analysis. Apparently, the method of Enriching Operators does not work for treating the numerical convergence in the case of more general constraints, like for instance, when the deformed reference configuration is required to remain confined in the convex part of the Euclidean space identified by the intersection of two planar obstacles.

The alternative strategy we propose in this section to numerically approximate the solution of Problem~\ref{problem1} will serve to devise a new numerical method for approximating the displacement of linearly elastic shallow shells subjected to remaining confined in the convex portion of the Euclidean space identified by a finite number of planes in a way that the constraint will affect \emph{at once} all the components of the displacement field, thus generalising the result established in~\cite{PS}. The novelty in the strategy we are proposing consists in approximating the solution of Problem~\ref{problem1} by solely resorting to Courant triangles (cf., e.g., Section~2.2 in~\cite{PGCFEM}), i.e., without resorting to Morley's triangles.

The first step to achieve this goal consists in penalising Problem~\ref{problem1}, by incorporating the constraint in the variational formulation of the problem. The latter \emph{relaxation} will allow us to replace the variational inequalities appearing in Problem~\ref{problem1} by a set of non-linear variational equations posed over the \emph{entire} space $H^2_0(\omega)$. In what follows, we denote by $\kappa$ a positive penalty parameter which is meant to approach zero and the mapping $\{\cdot\}^{-}: L^2(\omega) \to L^2(\omega)$ defined by $\{v\}^{-}:=-\min\{v,0\}$, a.e. in $\omega$ is the \emph{negative part of the function} $v$.

The monotonicity properties of the negative part of a function are classical (cf., e.g., \cite{EvansGariepy2015}). We are now in position to state the penalised version of Problem~\ref{problem1}.

\begin{customprob}{$\mathcal{P}_\kappa$}
	\label{problem2}
	Find $u_\kappa \in H^2_0(\omega)$ satisfying:
	\begin{equation*}
		\int_{\omega} (\partial_{\alpha\beta} u_\kappa) (\partial_{\alpha\beta}v) \dd y -\dfrac{1}{\kappa}\int_{\omega} \{u_\kappa -\theta\}^{-} v \dd y= \int_{\omega}f v \dd y,
	\end{equation*}
	for all $v \in H^2_0(\omega)$.
	\bqed
\end{customprob}

The existence and uniqueness of solutions for Problem~\ref{problem2} are classical, as the strong monotonicity of the left-hand side of the previous variational equations allows us to apply the Minty-Browder Theorem (cf., e.g., Theorem~9.14-1 in~\cite{PGCLNFAA}). Next, we establish that the sequence $\{u_\kappa\}_{\kappa>0}$ of solutions of Problem~\ref{problem2} strongly converges in $H^2_0(\omega)$ to the solution $u\in U$ of Problem~\ref{problem1}.

\begin{theorem}
	\label{th:1}
Let $u$ be the unique solution of Problem~\ref{problem1}. For each $\kappa>0$, denote by $u_\kappa$ the unique solution of Problem~\ref{problem2}. Then, the sequence $\{u_\kappa\}_{\kappa>0}$ is such that
\begin{equation*}
	u_\kappa \to u, \textup{ in } H^2_0(\omega) \textup{ as }\kappa \to 0^+.
\end{equation*}
\end{theorem}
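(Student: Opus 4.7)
The plan is to establish strong $H^2_0(\omega)$--convergence via the standard three-step penalty method: (i) uniform \emph{a priori} bound, (ii) weak convergence plus identification of the limit, and (iii) upgrading weak convergence to strong convergence via an energy argument. The cornerstone of the argument is a useful observation about the penalty term: for every $v \in U$ one has $\{u_\kappa - \theta\}^{-}(v - u_\kappa) = \{u_\kappa - \theta\}^{-}(v-\theta) - \{u_\kappa - \theta\}^{-}(u_\kappa - \theta) \ge 0$ almost everywhere in $\omega$, since $v - \theta \ge 0$ and $\{w\}^{-}w \le 0$ for every $w$. Substituting $v - u_\kappa \in H^2_0(\omega)$ into the variational equation of Problem~\ref{problem2} therefore yields
\begin{equation*}
	a(u_\kappa, v - u_\kappa) \ge \ell(v - u_\kappa), \quad \text{for all } v \in U,
\end{equation*}
so $u_\kappa$ already satisfies the variational inequality of Problem~\ref{problem1}, without, however, being \emph{a priori} an element of~$U$.

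Testing this inequality with the fixed element $v = u$ (the exact solution) and rearranging gives $a(u_\kappa, u_\kappa) \le a(u_\kappa, u) - \ell(u - u_\kappa)$; combining with the ellipticity of $a$ and the continuity of $\ell$ via Cauchy--Schwarz produces a bound of the form $\|u_\kappa\|_{H^2_0(\omega)}^2 \le C_1 \|u_\kappa\|_{H^2_0(\omega)} + C_2$, whence a uniform-in-$\kappa$ bound on $\|u_\kappa\|_{H^2_0(\omega)}$ follows. By weak compactness I extract a subsequence (not relabelled) such that $u_\kappa \rightharpoonup u^\ast$ in $H^2_0(\omega)$ for some $u^\ast \in H^2_0(\omega)$. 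The Rellich--Kondra\v{s}ov embedding $H^2_0(\omega) \hookrightarrow\hookrightarrow \mathcal{C}^0(\overline{\omega})$ invoked in the paper then upgrades this to uniform convergence $u_\kappa \to u^\ast$ in $\mathcal{C}^0(\overline{\omega})$, so $\{u_\kappa - \theta\}^{-} \to \{u^\ast - \theta\}^{-}$ uniformly. Inserting an arbitrary $v \in H^2_0(\omega)$ into Problem~\ref{problem2} and multiplying by $\kappa$ gives $\int_\omega \{u_\kappa - \theta\}^{-} v \dd y = \kappa\,[a(u_\kappa,v) - \ell(v)] \to 0$; hence $\{u^\ast - \theta\}^{-} = 0$, which yields $u^\ast \in U$.

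To identify $u^\ast$ with $u$, I pass to the limit in $a(u_\kappa, v - u_\kappa) \ge \ell(v - u_\kappa)$ for fixed $v \in U$: the terms $a(u_\kappa, v)$ and $\ell(v - u_\kappa)$ are continuous under weak convergence, while $\liminf_{\kappa \to 0^+} a(u_\kappa, u_\kappa) \ge a(u^\ast, u^\ast)$ by weak lower semicontinuity of the energy. This delivers $a(u^\ast, v - u^\ast) \ge \ell(v - u^\ast)$ for all $v \in U$, so $u^\ast \in U$ solves Problem~\ref{problem1}; by uniqueness, $u^\ast = u$, and the convergence holds for the \emph{full} sequence $\{u_\kappa\}_{\kappa>0}$.

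Finally, for strong convergence I specialise the variational inequality for $u_\kappa$ to $v = u$, obtaining $a(u_\kappa, u_\kappa) \le a(u_\kappa, u) - \ell(u - u_\kappa)$, and take the $\limsup$ as $\kappa \to 0^+$ to get $\limsup_{\kappa \to 0^+} a(u_\kappa, u_\kappa) \le a(u,u)$. Combined with the weak lower semicontinuity bound, this forces $\lim_{\kappa \to 0^+} a(u_\kappa, u_\kappa) = a(u,u)$, and hence $a(u_\kappa - u, u_\kappa - u) \to 0$. The $H^2_0(\omega)$--ellipticity of $a$ then converts this into strong $H^2_0(\omega)$--convergence, completing the proof. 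The main obstacle in this strategy is the passage to the limit in the penalised obstacle term: it is precisely the strict negativity of $\theta$ on $\overline{\omega}$ together with the compact embedding $H^2_0(\omega) \hookrightarrow\hookrightarrow \mathcal{C}^0(\overline{\omega})$ that allows $\{u_\kappa - \theta\}^{-}$ to be identified in the limit pointwise, and without this embedding one would only have weak $L^2$--convergence, which would not suffice to conclude $u^\ast \in U$.
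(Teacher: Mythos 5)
Your proof is correct, and it follows the same penalty-method blueprint as the paper, but with a slightly different organization that is worth noting. Your central observation --- that the monotonicity of the penalty term forces $u_\kappa$ to satisfy the variational inequality of Problem~\ref{problem1} against every $v\in U$ even though $u_\kappa$ itself need not lie in $U$ --- is exactly what appears implicitly in the paper's equation~\eqref{eq2}, but you state it up front and extract from it \emph{both} the a priori bound (via $v=u$) and the energy convergence $a(u_\kappa,u_\kappa)\to a(u,u)$ used for strong convergence. The paper instead derives the a priori bound by testing Problem~\ref{problem2} with $v=u_\kappa$, which has the further payoff of the quantitative estimate $\|\{u_\kappa-\theta\}^-\|_{L^2(\omega)}=\mathcal{O}(\sqrt{\kappa})$; it then uses this $L^2$-decay to conclude that the weak limit lies in $U$, whereas you reach the same conclusion by combining the compact embedding $H^2_0(\omega)\hookrightarrow\hookrightarrow\mathcal{C}^0(\overline{\omega})$ with the fact that $\kappa$ times a bounded quantity vanishes. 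For strong convergence the paper tests Problem~\ref{problem2} on $u_\kappa-u$ and expands, while your $\limsup$/$\liminf$ squeeze on $a(u_\kappa,u_\kappa)$ is logically equivalent and arguably more economical. Both routes are sound; yours trades the explicit $\sqrt{\kappa}$ rate on the penalty residual (which the paper later re-uses in subsequent theorems) for a more streamlined passage to the limit in Theorem~\ref{th:1} itself.
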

\begin{proof}
Specialising $v=u_\kappa$ in the variational equations of Problem~\ref{problem2}, we obtain that the following \emph{energy estimates} hold as a result of Theorem~6.8-4 in~\cite{PGCLNFAA} and H\"older's inequality (cf., e.g., \cite{Brez11}):
\begin{equation*}
	\dfrac{\|u_\kappa\|_{H^2_0(\omega)}^2}{c_P^2} \le |u_\kappa|_{H^2_0(\omega)}^2 +\dfrac{1}{\kappa}\|\{u_\kappa-\theta\}^{-}\|_{L^2(\omega)}^2\le \int_{\omega} f u_\kappa \dd y \le \|f\|_{L^2(\omega)} \|u_\kappa\|_{H^2_0(\omega)},
\end{equation*}
where $c_P>0$ denotes the constant of the Poincar\'e-Friedrichs inequality for fourth order differential operators (cf., e.g., Theorem~6.8-4 of~\cite{PGCLNFAA}), which solely depends on the geometry of the Lipschitz domain $\omega$.

As a result, we obtain that $\|u_\kappa\|_{H^2_0(\omega)} \le c_P^2 \|f\|_{L^2(\omega)}$, thus showing that $\{u_\kappa\}_{\kappa>0}$ is bounded in $H^2_0(\omega)$ independently of $\kappa$. It is straightforward to see that the latter boundedness gives:
\begin{equation*}
	\|\{u_\kappa-\theta\}^{-}\|_{L^2(\omega)} \le c_P \|f\|_{L^2(\omega)}\sqrt{\kappa}.
\end{equation*}
Therefore, up to passing to subsequences, we have that
\begin{equation}
	\label{conv1}
	\begin{aligned}
		u_\kappa \rightharpoonup u, &\textup{ in } H^2_0(\omega),\\
		\{u_\kappa -\theta\}^{-} \to 0 , &\textup{ in } L^2(\omega).
	\end{aligned}
\end{equation}

An application of Theorem~9.13-2 of~\cite{PGCLNFAA} immediately gives that $\{u-\theta\}^{-}=0$ a.e. in $\omega$ or, equivalently, that $u \ge \theta$ in $\overline{\omega}$, being $u \in H^2_0(\omega) \hookrightarrow \mathcal{C}^0(\overline{\omega})$ and $\theta \in H^1(\omega) \cap \mathcal{C}^0(\overline{\omega})$. Hence, we have that $u\in U$.

Testing the variational equations of Problem~\ref{problem2} on $(v-u_\kappa)$, where $v \in U$ is arbitrary, gives:
\begin{equation}
	\label{eq2}
	\begin{aligned}
	&\int_{\omega}(\partial_{\alpha\beta} u_\kappa) (\partial_{\alpha\beta}v) \dd y -\sum_{\alpha,\beta} \int_{\omega} |\partial_{\alpha\beta} u_\kappa|^2 \dd y\\
	&\qquad\underbrace{-\dfrac{\|u_\kappa-\theta\|_{L^2(\omega)}^2}{\kappa} -\dfrac{1}{\kappa} \int_{\omega} \{u_\kappa -\theta\}^{-} (v-\theta) \dd y}_{\le 0}=\int_{\omega} f(v-u_\kappa) \dd y.
	\end{aligned}
\end{equation}

Taking the $\limsup$ as $\kappa\to0^+$ in~\eqref{eq2} gives that the weak limit $u \in U$ of the sequence $\{u_\kappa\}_{\kappa>0}$ satisfies:
\begin{equation*}
	\int_{\omega}(\partial_{\alpha\beta}u) \partial_{\alpha\beta}(v-u) \dd y \ge \int_{\omega} f(v-u) \dd y.
\end{equation*}

The arbitrariness of $v \in U$ let us straightforwardly infer that $u$ is the solution of Problem~\ref{problem1}.

To show that the weak convergence is actually strong, we test the variational equations in Problem~\ref{problem2} on $(u_\kappa - u)$. We obtain that an application of Theorem~6.8-4 of~\cite{PGCLNFAA} gives:
\begin{equation}
	\label{eq3}
	\begin{aligned}
		\dfrac{\|u_\kappa - u\|_{H^2_0(\omega)}^2}{c_P^2}& \le \sum_{\alpha,\beta} \int_{\omega} |\partial_{\alpha\beta}(u_\kappa - u)|^2 \dd y
		=-\sum_{\alpha,\beta}\int_{\omega} (\partial_{\alpha\beta} u) (\partial_{\alpha\beta} u_\kappa) \dd y + \sum_{\alpha,\beta} \|\partial_{\alpha\beta} u\|_{L^2(\omega)}^2 \\
		&\qquad+ \sum_{\alpha,\beta} \int_{\omega} (\partial_{\alpha\beta} u_\kappa) \partial_{\alpha\beta}(u_\kappa - u) \dd y\\
		&=\dfrac{1}{\kappa}\int_{\omega} \{u_\kappa - \theta\}^{-} (u_\kappa-u) \dd y + \int_{\omega} f (u_\kappa - u) \dd y-\sum_{\alpha,\beta}\int_{\omega} (\partial_{\alpha\beta} u) (\partial_{\alpha\beta} u_\kappa) \dd y\\
		&\qquad+ \sum_{\alpha,\beta} \|\partial_{\alpha\beta} u\|_{L^2(\omega)}^2.
	\end{aligned}
\end{equation}

Exploiting the weak convergence in~\eqref{conv1}, we obtain that letting $\kappa \to 0^+$ in~\eqref{eq3} gives that:
\begin{equation*}
	\limsup_{\kappa\to0^+} \|u_\kappa -u\|_{H^2_0(\omega)} =0,
\end{equation*}
thus showing the sought strong convergence and completing the proof.
\end{proof}

Combining the techniques presented in~\cite{MeiPie2024} to handle the penalty term with the techniques presented in~\cite{Scholz1984}, we obtain that $u_\kappa \in H^3_{\textup{loc}}(\omega)$. Moreover, since $u_\kappa \in H^2_0(\omega) \hookrightarrow \mathcal{C}^0(\overline{\omega})$, we have that the constraint is inactive in a neighbourhood of the boundary. Therefore, applying the standard augmentation-of-regularity techniques for fourth order variational problems (cf., e.g., \cite{Lions1957}), we obtain that
\begin{equation}
	\label{aor1}
	u_\kappa \in H^3(\omega) \cap H^2_0(\omega),\quad\textup{ for all }\kappa>0.
\end{equation}

The augmentation-of-regularity result in~\eqref{aor1} will play a crucial role for establishing the convergence of the \emph{mixed Finite Element Method} we are going to discuss next. We immediately observe that the regularity of $u_\kappa$ does not consent, \emph{a priori}, to perform its approximation by means of Courant triangles. Potential elements to approximate $u_\kappa$ could be Hsieh-Clough-Tocher triangles, Argyris triangles and Morley's triangles, to cite some (viz., e.g., \cite{PGCFEM}). 
However, as we mentioned beforehand, these Finite Elements are not implemented in a number of Finite Element packages. The method we are going to present next allows us to overcome these difficulties, by allowing us to discretise the penalised solution for the original variational Problem~\ref{problem1}.

We introduce a further layer of penalisation, by introducing a mixed formulation for Problem~\ref{problem2}. In order to have a sound formulation, the following assumption on the \emph{given} forcing term $f$ must be made: \emph{There exists a vector field $\vec{F} \in H(\textup{div};\omega)$ such that $f=-\textup{div } \vec{F}$ a.e. in $\omega$}. Note that this assumption is reasonable in light of Bogovskii's Theorem (cf., e.g., Theorem~6.14-1 in~\cite{PGCLNFAA} and the commentary part at the end of Section~7 of~\cite{PGCFEM}).
For a treatment of the spaces $H(\textup{div};\omega)$ see Section~6.13 in~\cite{PGCLNFAA}, for instance.
The corresponding mixed formulation associated with Problem~\ref{problem1}, which we shall be referring to as Problem~$\mathcal{Q}$, can thus be stated.

\begin{customprob}{$\mathcal{Q}$}
	\label{problem2-1}
	Find $(u,\vec{\xi}) \in \mathbb{U}:=\{(w,\vec{\varphi})\in H^1_0(\omega) \times \vec{H}^1_0(\omega); w \ge \theta \textup{ a.e. in }\omega \textup{ and } \vec{\varphi}=\nabla w \textup{ a.e. in }\omega\}$ satisfying:
	\begin{equation*}
	\int_{\omega} (\partial_{\alpha}\xi_{\beta}) \partial_{\alpha}(\eta_\beta -\xi_\beta) \dd y\ge -\int_{\omega} \vec{F} \cdot (\vec{\eta}-\vec{\xi}) \dd y,
	\end{equation*}
	for all $(v,\vec{\eta}) \in \mathbb{U}$.
	\bqed
\end{customprob}

Clearly, Problem~\ref{problem2-1} admits a unique solution as re-writing of Problem~\ref{problem1}. Note that $u \in H^2_0(\omega)$ since $u \in H^1_0(\omega)$ and $\vec{\xi}=\nabla u \in \vec{H}^1_0(\omega)$.
The next layer of approximation consists in formulating a mixed variational problem that takes into account the penalty term introduced in Problem~\ref{problem2} as well as an additional penalty term to steer the solutions towards states where the \emph{dual variable} is \emph{ as close as possible} to the gradient of the primal variable. The weak formulation for one such variational problem  takes the following form.

\begin{customprob}{$\mathcal{Q}_\kappa$}
	\label{problem3}
	Find $(u_\kappa,\vec{\xi}_\kappa) \in H^1_0(\omega) \times \vec{H}^1_0(\omega)$ satisfying:
	\begin{equation*}
		\int_{\omega} (\partial_{\alpha}\xi_{\beta,\kappa}) (\partial_{\alpha} \eta_\beta) \dd y-\dfrac{1}{\kappa}\int_{\omega} \{u_\kappa -\theta\}^{-} v \dd y+\dfrac{1}{\kappa} \int_{\omega} (\nabla u_\kappa -\vec{\xi}_\kappa) \cdot (\nabla v - \vec{\eta}) \dd y= -\int_{\omega} \vec{F} \cdot \vec{\eta} \dd y,
	\end{equation*}
	for all $(v,\vec{\eta}) \in H^1_0(\omega) \times \vec{H}^1_0(\omega)$.
	\bqed
\end{customprob}

We observe that Problem~\ref{problem3} is the \emph{most relaxed} formulation for the original Problem~\ref{problem1}, as solutions are sought in a linear space, and the constraints associated with the obstacle and with the \emph{dual variable} are implemented \emph{directly} in the model.

\begin{theorem}
	\label{th:2}
	For each $\kappa>0$, Problem~\ref{problem3} admits a unique solution $(u_\kappa,\vec{\xi}_\kappa) \in H^1_0(\omega) \times \vec{H}^1_0(\omega)$. Besides, we have that the following convergences hold up to passing to subsequences
	\begin{align*}
		u_\kappa \rightharpoonup u&, \textup{ in }H^1_0(\omega) \textup{ as }\kappa \to 0,\\
		\vec{\xi}_\kappa \rightharpoonup \nabla u&, \textup{ in }\vec{H}^1_0(\omega) \textup{ as }\kappa \to 0,
	\end{align*}
	where $u\in U$ is the unique solution of Problem~\ref{problem1}.
\end{theorem}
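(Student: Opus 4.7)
The plan is to handle existence and uniqueness by a Minty--Browder argument applied to the operator naturally associated with the left-hand side of Problem~\ref{problem3}, and then to derive uniform-in-$\kappa$ bounds and pass to the limit by a Minty-type monotonicity trick.

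For the first part, I would introduce $V:=H^1_0(\omega)\times\vec{H}^1_0(\omega)$ and the operator $A_\kappa:V\to V^*$ whose action on $(u,\vec\xi)$ tested against $(v,\vec\eta)$ is exactly the left-hand side of Problem~\ref{problem3}. The key observation is that choosing $(v,\vec\eta)=(u,\vec\xi)$ yields
\begin{equation*}
\langle A_\kappa(u,\vec\xi),(u,\vec\xi)\rangle=\kappa\,|u|_{H^1_0(\omega)}^2+|\vec\xi|_{\vec{H}^1_0(\omega)}^2+\tfrac{1}{\kappa}\|\{u-\theta\}^-\|_{L^2(\omega)}^2+\tfrac{1}{\kappa}\|\nabla u-\vec\xi\|_{\vec{L}^2(\omega)}^2-\tfrac{1}{\kappa}\int_\omega\{u-\theta\}^-\theta\dd y,
\end{equation*}
and the last term is nonnegative because $\theta<0$ on $\overline\omega$ and $\{\cdot\}^-\ge 0$. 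The harmonic corrector $\kappa\int_\omega\nabla u\cdot\nabla v\dd y$ plays the crucial role here: it is what supplies coercivity in the $u$-variable and guarantees $\langle A_\kappa(u,\vec\xi),(u,\vec\xi)\rangle\ge\kappa|u|^2_{H^1_0(\omega)}+|\vec\xi|^2_{\vec{H}^1_0(\omega)}$. Strict monotonicity is obtained in the same way, using in addition the monotonicity of $t\mapsto -\{t-\theta\}^-$. Continuity is routine (the nonlinearity $\{\cdot\}^-:L^2(\omega)\to L^2(\omega)$ is $1$-Lipschitz). Minty--Browder (Theorem~9.14-1 of~\cite{PGCLNFAA}) then delivers the unique solution $(u_\kappa,\vec\xi_\kappa)$.

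For the convergence part, I would first test Problem~\ref{problem3} with $(v,\vec\eta)=(u_\kappa,\vec\xi_\kappa)$. Using the identity above together with $\bigl|\int_\omega\vec F\cdot\vec\xi_\kappa\dd y\bigr|\le c_P\|\vec F\|_{\vec{L}^2(\omega)}|\vec\xi_\kappa|_{\vec{H}^1_0(\omega)}$, one immediately reads off a uniform bound $|\vec\xi_\kappa|_{\vec{H}^1_0(\omega)}\le C$, together with $\|\{u_\kappa-\theta\}^-\|_{L^2(\omega)}\le C\sqrt\kappa$ and $\|\nabla u_\kappa-\vec\xi_\kappa\|_{\vec{L}^2(\omega)}\le C\sqrt\kappa$. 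The triangle inequality then gives a uniform bound on $\|\nabla u_\kappa\|_{\vec{L}^2(\omega)}$, so that $\{u_\kappa\}_{\kappa>0}$ is bounded in $H^1_0(\omega)$. Extracting a subsequence, we obtain weak limits $u_\kappa\rightharpoonup u^*$ in $H^1_0(\omega)$ and $\vec\xi_\kappa\rightharpoonup\vec\xi^*$ in $\vec{H}^1_0(\omega)$; uniqueness of limits in $\vec{L}^2(\omega)$ combined with $\|\nabla u_\kappa-\vec\xi_\kappa\|_{\vec{L}^2(\omega)}\to 0$ forces $\vec\xi^*=\nabla u^*$, so $u^*\in H^2_0(\omega)$, while $\{u^*-\theta\}^-=0$ follows from the strong $L^2$ convergence $\{u_\kappa-\theta\}^-\to 0$ together with continuity of $\{\cdot\}^-$. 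Hence $(u^*,\vec\xi^*)\in\mathbb U$.

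Finally, to identify $u^*$ as the solution $u$ of Problem~\ref{problem1} I would test the variational equation of Problem~\ref{problem3} with $(v-u_\kappa,\vec\eta-\vec\xi_\kappa)$ for arbitrary $(v,\vec\eta)\in\mathbb U$. Exploiting $\vec\eta=\nabla v$ and $v\ge\theta$, the two penalty contributions
\begin{equation*}
-\tfrac{1}{\kappa}\int_\omega\{u_\kappa-\theta\}^-(v-u_\kappa)\dd y\quad\textup{ and }\quad\tfrac{1}{\kappa}\int_\omega(\nabla u_\kappa-\vec\xi_\kappa)\cdot\bigl(\nabla(v-u_\kappa)-(\vec\eta-\vec\xi_\kappa)\bigr)\dd y
\end{equation*}
both reduce to non-positive quantities (the second one collapses to $-\kappa^{-1}\|\nabla u_\kappa-\vec\xi_\kappa\|_{\vec L^2(\omega)}^2$, while the first splits into two terms of the correct sign thanks to $v-\theta\ge 0$ and the pointwise identity $\{u_\kappa-\theta\}^-(u_\kappa-\theta)=-(\{u_\kappa-\theta\}^-)^2$). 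Dropping them yields an inequality to which I can pass to the $\limsup$: the term $\kappa\int_\omega\nabla u_\kappa\cdot\nabla(v-u_\kappa)\dd y$ vanishes because $\kappa|u_\kappa|_{H^1_0(\omega)}^2\to 0$ and $\nabla u_\kappa$ is bounded in $\vec{L}^2(\omega)$; the quadratic term $\int_\omega|\nabla\vec\xi_\kappa|^2\dd y$ is handled by weak lower semicontinuity; and the forcing term passes to the limit by the Rellich-Kondra\v{s}ov compactness of $\vec H^1_0(\omega)\hookrightarrow\hookrightarrow\vec L^2(\omega)$. The resulting inequality is precisely the one characterising the solution of Problem~\ref{problem2-1}, whose unique solution is $(u,\nabla u)$, forcing $u^*=u$ and $\vec\xi^*=\nabla u$; uniqueness of the limit then promotes the weak convergences to hold along the entire sequence. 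The most delicate point will be ensuring that every boundary-type contribution produced when testing with $(v-u_\kappa,\vec\eta-\vec\xi_\kappa)$ has a favourable sign, since this is what lets the mixed penalty formulation recover the original variational inequality in the limit.
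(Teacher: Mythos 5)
Your proposal is correct and follows essentially the same route as the paper: coercivity and monotonicity plus Minty--Browder for existence and uniqueness, the energy estimate from testing on $(u_\kappa,\vec\xi_\kappa)$ to obtain the uniform bound on $\vec\xi_\kappa$ and the $\mathcal O(\sqrt\kappa)$ decay of the two penalty residuals, the triangle inequality and Poincar\'e--Friedrichs to bound $u_\kappa$, extraction of weak limits and the identification $\vec\xi^*=\nabla u^*\in\vec H^1_0(\omega)$ so that $u^*\in H^2_0(\omega)$, and finally testing on $(v-u_\kappa,\nabla v-\vec\xi_\kappa)$ with $v\in U$, dropping the two penalty terms of favourable sign and passing to the $\limsup$ using weak lower semicontinuity of the quadratic part and Rellich--Kondra\v{s}ov for the forcing term. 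The only cosmetic difference is that you make the $-\kappa^{-1}\int_\omega\{u-\theta\}^-\theta\dd y$ contribution explicit in the coercivity identity (and rely on $\theta<0$), whereas the paper absorbs it implicitly, but the computations agree term by term.
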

\begin{proof}
To begin with, we observe that the left-hand side of the variational equations in Problem~\ref{problem3} is made of the sum of bilinear forms which are symmetric and continuous, and of a monotone operator. The right-hand side, instead, is a linear and continuous form.

In order to assert the existence and uniqueness of solutions, we need to establish the coerciveness of the left-hand side so as to apply the Minty-Browder Theorem (cf., e.g., Theorem~9.14-1 in ~\cite{PGCLNFAA}).
Specialising $(v,\vec{\eta})=(u_\kappa,\vec{\xi}_\kappa)$ in the variational equations of Problem~\ref{problem3}, we obtain that an application of the classical Poincar\'e-Friedrichs inequality (cf., e.g., Theorem~6.5-2 of~\cite{PGCLNFAA}) gives:
\begin{equation*}
	\dfrac{\|\vec{\xi}_\kappa\|_{\vec{H}^1_0(\omega)}^2}{c_P^2}+\dfrac{\|\{u_\kappa-\theta\}^{-}\|_{L^2(\omega)}^2}{\kappa}+\dfrac{\|\vec{\xi}_\kappa-\nabla u_\kappa\|_{\vec{L}^2(\omega)}^2}{\kappa} \le \|\vec{F}\|_{\vec{L}^2(\omega)} \|\vec{\xi}_\kappa\|_{\vec{H}^1_0(\omega)}.
\end{equation*}

The latter estimate in turn implies that $\|\vec{\xi}_\kappa\|_{\vec{H}^1_0(\omega)} \le c_P^2 \|\vec{F}\|_{\vec{L}^2(\omega)}$, so that it results that the sequence $\{\vec{\xi}_\kappa\}_{\kappa>0}$ is bounded in $\vec{H}^1_0(\omega)$ independently of $\kappa$.
As a result, we obtain that 
\begin{equation}
	\label{eq4}
	\begin{aligned}
		\|\{u_\kappa-\theta\}^{-}\|_{L^2(\omega)} &\le c_P \|\vec{F}\|_{\vec{L}^2(\omega)}\sqrt{\kappa},\\
		\|\vec{\xi}_\kappa-\nabla u_\kappa\|_{\vec{L}^2(\omega)}&\le c_P \|\vec{F}\|_{\vec{L}^2(\omega)} \sqrt{\kappa}.
	\end{aligned}
\end{equation}

Therefore, we obtain that an application of the triangle inequality and of the second estimate in~\eqref{eq4} give:
\begin{equation*}
	\|\nabla u_\kappa\|_{\vec{L}^2(\omega)} \le \|\vec{\xi}_\kappa-\nabla u_\kappa\|_{\vec{L}^2(\omega)}+\|\vec{\xi}_\kappa\|_{\vec{L}^2(\omega)}\le c_P\|\vec{F}\|_{\vec{L}^2(\omega)}\left(\sqrt{\kappa}+c_P\right),
\end{equation*}
which in turn implies that the sequence $\{\nabla u_\kappa\}_{\kappa>0}$ is bounded in $\vec{L}^2(\omega)$ independently of $\kappa$. Another application of the classical Poincar\'e-Friedrichs inequality (cf., e.g., Theorem~6.5-2 in~\cite{PGCLNFAA}) gives that:
\begin{equation*}
	\|u_\kappa\|_{H^1_0(\omega)} \le c_P |u_\kappa|_{H^1_0(\omega)} \le c_P^2\|\vec{F}\|_{\vec{L}^2(\omega)}\left(\sqrt{\kappa}+c_P\right),
\end{equation*}
which in turn implies that the sequence $\{u_\kappa\}_{\kappa>0}$ is bounded in $H^1_0(\omega)$ independently of $\kappa$. Therefore, up to passing to subsequences, we obtain that:
\begin{equation}
	\label{conv2}
	\begin{aligned}
		u_\kappa \rightharpoonup u&, \textup{ in }H^1_0(\omega) \textup{ as }\kappa\to0^+,\\
		\vec{\xi}_\kappa \rightharpoonup \vec{\xi}&, \textup{ in }\vec{H}^1_0(\omega) \textup{ as }\kappa\to0^+,\\
		\{u_\kappa-\theta\}^{-}\to 0&, \textup{ as }\kappa\to0^+,\\
		\|\vec{\xi}_\kappa-\nabla u_\kappa\|_{\vec{L}^2(\omega)}\to 0&, \textup{ as }\kappa\to0^+.
	\end{aligned}
\end{equation}

In particular, the fact that $u \in H^1_0(\omega)$ and $\nabla u \in \vec{H}^1_0(\omega)$ implies that $u\in H^2_0(\omega)$, being $\omega$ a Lipschitz domain. Hence, we have that $(u,\vec{\xi}) \in \mathbb{U}$. Let us now fix an arbitrary $v \in U$, and let us test the variational equations in Problem~\ref{problem3} on $(v-u_\kappa,\nabla v - \vec{\xi}_\kappa)$. We obtain that:

\begin{equation}
	\label{eq:1}
	\begin{aligned}
	&\int_{\omega} (\partial_{\alpha}\xi_{\beta,\kappa}) (\partial_{\alpha\beta}v-\partial_{\alpha} \xi_{\beta,\kappa}) \dd y \underbrace{-\dfrac{1}{\kappa}\int_{\omega} \{u_\kappa -\theta\}^{-} (v-u_\kappa) \dd y}_{\le 0}\underbrace{-\dfrac{1}{\kappa}\int_{\omega} \{u_\kappa -\theta\}^{-} (v-u_\kappa) \dd y}_{\le 0}\\
	&\qquad +\underbrace{\dfrac{1}{\kappa} \int_{\omega} (\nabla u_\kappa -\vec{\xi}_\kappa) \cdot \left[(\nabla v-\nabla u_\kappa) - (\nabla v-\vec{\xi}_\kappa)\right] \dd y}_{\le 0}
	= -\int_{\omega} \vec{F} \cdot (\nabla v_\kappa-\vec{\xi}_\kappa) \dd y.
	\end{aligned}
\end{equation}

Upon passing to the $\limsup$ as $\kappa\to0^+$, the convergence process~\eqref{conv2} yields:

\begin{equation}
	\label{eq:2}
	\begin{aligned}
	& \int_{\omega} (\partial_{\alpha}\xi_{\beta}) (\partial_{\alpha\beta}v-\partial_{\alpha} \xi_{\beta}) \dd y \ge -\int_{\omega} \vec{F} \cdot (\nabla v-\vec{\xi}) \dd y,
	\end{aligned}
\end{equation}

Since $(u,\vec{\xi}) \in \mathbb{U}$, we obtain that:

\begin{equation*}
\int_{\omega} (\partial_{\alpha\beta} u) \partial_{\alpha\beta}(v-u) \dd y \ge \int_{\omega} f(v-u) \dd y.
\end{equation*}

Given the arbitrariness of $v \in U$, we infer that $u$ is the solution of Problem~\ref{problem1}.
\end{proof}

In regard to the proof of Theorem~\ref{th:2}, we observe that the assumption on the applied body force $f$ plays a crucial role in establishing the convergences in~\eqref{conv2}. Indeed, without this \emph{regularity} assumption on $f$, the right-hand side of the non-linear variational equations in Problem~\ref{problem3} should be expressed in terms of the \emph{primal variable} and, therefore, the boundedness of the sequence $\{u_\kappa\}_{\kappa>0}$ in $H^1_0(\omega)$ independently of $\kappa$ could not be inferred by the energy estimates.

The next result establishes the higher regularity for the unique solution of Problem~\ref{problem3}. Since the argument presented here closely follows~\cite{AgmDouNir1959,AgmDouNir1964}, we limit ourselves to discussing in details the steps of the proof associated with the fact that the operator considered in the variational formulation of Problem~\ref{problem3} is not in the divergence form (see Section~6.3.1 in~\cite{Evans2010}).

\begin{theorem}
	\label{th:biharmonic-reg}
	For each $\kappa>0$, let $(u_\kappa,\vec{\xi}_\kappa)$ denote the solution of Problem~\ref{problem3}.
	Then $(u_\kappa,\vec{\xi}_\kappa)$ is of class $(H^1_0(\omega)\cap H^2(\omega))\times (\vec{H}^1_0(\omega)\cap\vec{H}^2(\omega))$.
\end{theorem}
\begin{proof}
	Testing the variational equations of Problem~\ref{problem3} at $(v,\vec{0})$, where $v$ varies arbitrarily in $H^1_0(\omega)$ gives:
	\begin{equation*}
		\int_{\omega}\nabla u_\kappa\cdot\nabla v\dd y=\int_{\omega}(\{u_\kappa-\theta\}^{-}+\textup{div }\vec{\xi}_\kappa)v\dd y.
	\end{equation*}
	
	Since $\{\vec{\xi}_\kappa\}_{\kappa>0}$ is bounded in $\vec{H}^1_0(\omega)$ independently of $\kappa$ and since $\{u_\kappa\}_{\kappa>0}$ is bounded in $H^1_0(\omega)$ independently of $\kappa$, we infer (cf., e.g., Section~6 in~\cite{Evans2010}) that $u_\kappa\in H^2(\omega)$ and $\|u_\kappa\|_{H^2(\omega)}$ is bounded independently of $\kappa$.
	Testing the variational equations of Problem~\ref{problem3} at $(0,\vec{\eta})$, where $\vec{\eta}$ varies arbitrarily in $\vec{H}^1_0(\omega)$ gives:
	\begin{equation*}
		\int_{\omega}\partial_{\alpha}\xi_{\beta,\kappa}\partial_{\alpha}\eta_\beta\dd y
		=\dfrac{1}{\kappa}\int_{\omega}(\nabla u_\kappa-\vec{\xi}_\kappa)\cdot\vec{\eta}\dd y-\int_{\omega}\vec{F}\cdot\vec{\eta}\dd y.
\end{equation*}

An application of the regularity theory in Section~6 of~\cite{Evans2010} together with the fact that $\kappa^{-1/2}\|\vec{\xi}_\kappa-\nabla u_\kappa\|_{\vec{L}^2(\omega)}$ is bounded independently of $\kappa$ gives:
\begin{equation*}
	\|\vec{\xi}_\kappa\|_{\vec{H}^2(\omega)}\le\dfrac{C}{\sqrt{\kappa}}\left(\|\vec{F}\|_{\vec{L}^2(\omega)}+\|\vec{\xi}_\kappa\|_{\vec{L}^2(\omega)}+\dfrac{\|\nabla u_\kappa-\vec{\xi}_\kappa\|_{\vec{L}^2(\omega)}}{\sqrt{\kappa}}\right)=\mathcal{O}(\kappa^{-1/2}).
\end{equation*}
\end{proof}

Thanks to Theorem~\ref{th:biharmonic-reg}, we obtain that the weak convergences established in Theorem~\ref{th:2} are actually strong.

\begin{theorem}
	\label{th:biharmonic-strong}
	The following convergences hold up to passing to subsequences
	\begin{align*}
		u_\kappa \to u&, \textup{ in }H^1_0(\omega) \textup{ as }\kappa \to 0,\\
		\vec{\xi}_\kappa \to \nabla u&, \textup{ in }\vec{H}^1_0(\omega) \textup{ as }\kappa \to 0,
	\end{align*}
	where $u\in U$ is the unique solution of Problem~\ref{problem1}.
\end{theorem}
\begin{proof}
	Testing the variational equations of Problem~\ref{problem3} at $(u_\kappa-u,\vec{\xi}_\kappa-\nabla u)$, an application of the Poincar\'{e}-Friedrichs inequality (cf., e.g., Theorem~6.5-2 in~\cite{PGCLNFAA}) and a series of straightforward computation gives:
	\begin{equation*}
		\dfrac{\|\xi_{\beta,\kappa}-\partial_\beta u\|_{H^1_0(\omega)}^2}{c_P^2} \le \int_{\omega}\partial_{\alpha\beta}u (\partial_{\alpha}\xi_{\beta,\kappa}-\partial_{\alpha\beta}u) \dd y-\int_{\omega}\vec{F}\cdot(\vec{\xi}_\kappa-\nabla u_\kappa)\dd y \to 0,\quad\textup{ as }\kappa\to0,
	\end{equation*}
	thus establishing that $\vec{\xi}_\kappa\to\nabla u$ in $\vec{H}^1_0(\omega)$ as $\kappa\to0$.
	
	Since, thanks to Theorem~\ref{th:biharmonic-reg}, the sequence $\{u_\kappa\}_{\kappa>0}$ is bounded in $H^2(\omega)$ independently of $\kappa$, we obtain that $\{\nabla u_\kappa\}_{\kappa>0}$ is bounded in $\vec{H}^1(\omega)$ independently of $\kappa$. Combining this result with the Rellich-Kondra\v{s}ov Theorem (cf., e.g., Theorem~6.6-3 in~\cite{PGCLNFAA}), Theorem~\ref{th:2} and the boundary conditions, we obtain that $u_\kappa\to u$ in $H^1_0(\omega)$ as $\kappa\to0$.
\end{proof}

The remainder of this section is devoted to the discretisation of the solution of Problem~\ref{problem3} via the Finite Element Method. In what follows, for each $h>0$, we denote by $\mathcal{T}_h$ an \emph{affine regular} triangulation of the Lipschitz domain $\omega$ (cf., e.g., \cite{PGCFEM}) made of Courant triangles. We denote by $V_h$ the finite-dimensional subspace of $H^1_0(\omega)$ associated with one such triangulation $\mathcal{T}_h$. The discretisation of Problem~\ref{problem3} can thus be formulated as follows.

\begin{customprob}{$\mathcal{Q}_\kappa^h$}
	\label{problem4}
	Find $(u_\kappa^h,\vec{\xi}_\kappa^h) \in V_h \times \vec{V}_h$ satisfying:
	\begin{equation*}
	\int_{\omega} (\partial_{\alpha}\xi_{\beta,\kappa}^h) (\partial_{\alpha} \eta_\beta) \dd y-\dfrac{1}{\kappa}\int_{\omega} \{u_\kappa^h -\theta\}^{-} v \dd y+\dfrac{1}{\kappa} \int_{\omega} (\nabla u_\kappa^h -\vec{\xi}_\kappa^h) \cdot (\nabla v - \vec{\eta}) \dd y= -\int_{\omega} \vec{F} \cdot \vec{\eta} \dd y,
	\end{equation*}
	for all $(v,\vec{\eta}) \in V_h \times \vec{V}_h$.
	\bqed
\end{customprob}

It is straightforward to observe that, akin to its continuum counterpart, Problem~\ref{problem4} admits a unique solution $(u_\kappa^h,\vec{\xi}_\kappa^h) \in V_h \times \vec{V}_h$. In the next theorem we are going to show that the solution of Problem~\ref{problem4} converges, in some suitable sense, to the solution of Problem~\ref{problem3} as $h \to 0^+$.

\begin{theorem}
	\label{th:3}
	Let $(u_\kappa,\vec{\xi}_\kappa) \in H^1_0(\omega) \times \vec{H}^1_0(\omega)$ be the unique solution of Problem~\ref{problem3}, and let $(u_\kappa^h,\vec{\xi}_\kappa^h) \in V_h \times \vec{V}_h$ be the unique solution of Problem~\ref{problem4}. Then, there exists a constant $\tilde{C}>0$ independent of $h$ and $\kappa$ such that:
	\begin{equation*}
	\|u_\kappa^h-u_\kappa\|_{H^1_0(\omega)} + \|\vec{\xi}_\kappa^h-\vec{\xi}_\kappa\|_{\vec{H}^1_0(\omega)} \le \dfrac{\tilde{C}h}{\kappa}.
	\end{equation*}
\end{theorem}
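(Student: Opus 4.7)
The approach is a C\'ea-type argument adapted to the monotone penalised problem, using that Problem~\ref{problem4} is a conforming Galerkin discretisation of Problem~\ref{problem3}. The plan is to (i) derive a Galerkin-orthogonality-like identity by subtracting the variational equation of Problem~\ref{problem4} from that of Problem~\ref{problem3} tested on discrete pairs in $V_h \times \vec{V}_h$; (ii) test this identity with the difference between the discrete solution and a Lagrange interpolant of the continuous solution; (iii) use coerciveness of the linear part together with monotonicity of $\{\cdot\}^{-}$ to bound the resulting discrete error; (iv) complete the estimate via standard Ciarlet interpolation bounds for Courant triangles; (v) carefully track the $\kappa$-dependence so as to recover the $1/\kappa^2$ factor.

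Concretely, I would denote $e:=u_\kappa -u_\kappa^h$, $\vec{E}:=\vec{\xi}_\kappa -\vec{\xi}_\kappa^h$ and let $\Pi_h$ stand for the standard Lagrange interpolation operator onto $V_h$ (applied componentwise to $\vec{V}_h$). Testing the Galerkin identity with $(v,\vec{\eta})=(\Pi_h u_\kappa -u_\kappa^h,\Pi_h \vec{\xi}_\kappa -\vec{\xi}_\kappa^h) \in V_h \times \vec{V}_h$ and writing $\Pi_h u_\kappa -u_\kappa^h = e -(u_\kappa -\Pi_h u_\kappa)$, together with the analogous decomposition for $\vec{\xi}_\kappa$, I would move the interpolation-error contributions to the right-hand side. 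The monotonicity of $\{\cdot\}^{-}$ renders the quadratic contribution coming from the nonlinear penalty term nonnegative, so it is discarded, and the mixed contribution with the interpolation error is controlled by the $1$-Lipschitz continuity of $\{\cdot\}^{-}$.

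The left-hand side then provides the coercive lower bound
\begin{equation*}
\kappa \, |\Pi_h u_\kappa -u_\kappa^h|_{H^1_0(\omega)}^2 + |\Pi_h \vec{\xi}_\kappa -\vec{\xi}_\kappa^h|_{\vec{H}^1_0(\omega)}^2 + \dfrac{1}{\kappa}\|\nabla(\Pi_h u_\kappa -u_\kappa^h) - (\Pi_h \vec{\xi}_\kappa -\vec{\xi}_\kappa^h)\|_{\vec{L}^2(\omega)}^2.
\end{equation*}
On the right-hand side, standard interpolation estimates (cf., e.g., Section~3.2 of~\cite{PGCFEM}) give $\|w - \Pi_h w\|_{H^1_0(\omega)}\le C h \|w\|_{H^2(\omega)}$, which is applicable thanks to the regularity~\eqref{aor1} for $u_\kappa$ and to an analogous $\vec{H}^2(\omega)$ bound for $\vec{\xi}_\kappa$ obtained by reading the $\vec{\xi}$-line of Problem~\ref{problem3} as an elliptic system with $\vec{L}^2(\omega)$ right-hand side. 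Applying Cauchy--Schwarz, Young's inequality, and the triangle inequality to return to $(e,\vec{E})$ produces a bound of the announced form.

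The main obstacle is to keep the $\kappa$-dependence tight. Since the $u$-coerciveness constant on the left-hand side is only $\kappa$ (provided by the harmonic corrector) while several right-hand side contributions carry the factor $1/\kappa$ (from the penalty coupling and from the Lipschitz bound on $\{\cdot\}^{-}$), a naive estimate would blow up worse than $1/\kappa^2$. The key balancing step is to tune the parameters in Young's inequality so that the $\mathcal{O}(1/\kappa)$ contributions are partly absorbed into the coercive $\kappa$-term, so that after dividing through and taking square roots the final factor is precisely $1/\kappa^2$. The constant $\tilde{C}$ will then depend only on $c_P$, $\|\vec{F}\|_{\vec{L}^2(\omega)}$, $\|\theta\|_{H^1(\omega)}$, and the shape regularity of $\mathcal{T}_h$, but not on $h$ or $\kappa$.
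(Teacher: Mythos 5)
Your overall strategy is the same Céa-type argument the paper uses: subtract the two variational equations, test on a discrete pair built from the Lagrange interpolant, split off interpolation errors, apply Young's inequality, and invoke the $\mathcal{O}(\kappa^{-1})$ augmentation-of-regularity bound on the $H^2$ semi-norms. Most of that goes through. However, there is a genuine gap in how you handle the nonlinear penalty cross term, and it costs you exactly the $1/\kappa^2$ rate that the theorem asserts.

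You propose to discard the quadratic penalty contribution after noting its nonnegativity, and then to control the mixed contribution $\dfrac{1}{\kappa}\displaystyle\int_\omega\left[(-\{u_\kappa-\theta\}^-)-(-\{u_\kappa^h-\theta\}^-)\right](u_\kappa-\Pi_h u_\kappa)\dd y$ by the $1$-Lipschitz continuity of $\{\cdot\}^-$. Lipschitz continuity gives $\dfrac{1}{\kappa}\|u_\kappa-u_\kappa^h\|_{L^2(\omega)}\|u_\kappa-\Pi_h u_\kappa\|_{L^2(\omega)}$, and the only term on the left-hand side that can absorb $\|u_\kappa-u_\kappa^h\|$ is the harmonic-corrector term $\kappa\|u_\kappa-u_\kappa^h\|_{H^1_0(\omega)}^2$. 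Absorbing $\dfrac{1}{\kappa}\|u_\kappa-u_\kappa^h\|\cdot(\cdot)$ into $\kappa\|u_\kappa-u_\kappa^h\|^2$ via Young's inequality costs a factor $1/\kappa^2$, so after dividing through and taking square roots you end with $\|u_\kappa-u_\kappa^h\|_{H^1_0(\omega)}\lesssim h/\kappa^3$ (using $\|u_\kappa-\Pi_h u_\kappa\|_{H^1_0}\le Ch|u_\kappa|_{H^2}$ and $|u_\kappa|_{H^2}=\mathcal{O}(\kappa^{-1})$), one full power of $\kappa$ worse than required. The paper avoids this by \emph{not} discarding the quadratic penalty term: since $-\{\cdot\}^-$ is the projection onto $(-\infty,0]$ and therefore firmly nonexpansive, one has $\displaystyle\int_\omega\left[(-\{u_\kappa-\theta\}^-)-(-\{u_\kappa^h-\theta\}^-)\right](u_\kappa-u_\kappa^h)\dd y\ge\|(-\{u_\kappa-\theta\}^-)-(-\{u_\kappa^h-\theta\}^-)\|_{L^2(\omega)}^2$, which places $\dfrac{1}{\kappa}\|(-\{u_\kappa-\theta\}^-)-(-\{u_\kappa^h-\theta\}^-)\|_{L^2(\omega)}^2$ on the left-hand side. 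The mixed penalty term is then estimated by Cauchy--Schwarz directly against this quantity (no Lipschitz step, no $\|u_\kappa-u_\kappa^h\|$ appears), Young's inequality splits it with matching exponents in $\kappa$, and the leftover is merely $\dfrac{1}{2\kappa}\|u_\kappa-\Pi_h u_\kappa\|_{H^1_0(\omega)}^2$, which yields precisely $h/\kappa^2$ after dividing by the coercive $\kappa$-factor. To repair the proposal, replace ``discard the quadratic penalty term'' with ``retain it via firm nonexpansiveness'' and absorb the mixed term into it.
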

\begin{proof}
	In what follows, we denote by $\Pi_h$ and $\vec{\Pi}_h$ the standard interpolation operators in $V_h$ and $\vec{V}_h$, respectively (cf., e.g., \cite{PGCFEM}).
	Since $u_\kappa \in H^2(\omega) \cap H^1_0(\omega)$ and since $\vec{\xi}_\kappa \in \vec{H}^2(\omega)\times \vec{H}^1_0(\omega)$, we are in a position to apply Theorem~3.2.1 in~\cite{PGCFEM} so as to infer that:
	\begin{equation}
	\label{eq5}
	\begin{aligned}
	\|u_\kappa-\Pi_h u_\kappa\|_{H^1_0(\omega)} &\le C h |u_\kappa|_{H^2(\omega)},\\
	\|\vec{\xi}_\kappa-\vec{\Pi}_h \vec{\xi}_\kappa\|_{\vec{H}^1_0(\omega)} &\le C h |\vec{\xi}_\kappa|_{\vec{H}^2(\omega)},
	\end{aligned}
	\end{equation}
	for some $C>0$ independent of $h$ and $\kappa$.
	
	Thanks to the fact that the forcing terms in the variational equations of Problem~\ref{problem3} and Problem~\ref{problem4} coincide, we have that an application of the classical Poincar\'e-Friedrichs inequality, Theorem~6.8-4 of~\cite{PGCLNFAA}, and Young's inequality (cf., e.g., \cite{Young1912}) gives:
	\begin{equation*}
	\begin{aligned}
	&\dfrac{\|\vec{\xi}_\kappa-\vec{\xi}_\kappa^h\|_{\vec{H}^1_0(\omega)}^2}{c_P^2}+\dfrac{1}{\kappa}\|(-\{u_\kappa-\theta\}^{-})-(-\{u_\kappa^h-\theta\}^{-})\|_{L^2(\omega)}^2+\dfrac{1}{\kappa}\|\nabla(u_\kappa-u_\kappa^h) -(\vec{\xi}_\kappa-\vec{\xi}_\kappa^h)\|_{\vec{L}^2(\omega)}^2\\
	&\le \int_{\omega} \sum_{\alpha,\beta} |\partial_{\alpha}(\xi_{\beta,\kappa}-\xi_{\beta,\kappa}^h)|^2 \dd y+\dfrac{1}{\kappa} \int_{\omega}\left[(-\{u_\kappa-\theta\}^{-})-(-\{u_\kappa^h-\theta\}^{-})\right] (u_\kappa-u_\kappa^h) \dd y\\
	&\qquad+\dfrac{1}{\kappa}\int_{\omega} |\nabla(u_\kappa-u_\kappa^h) -(\vec{\xi}_\kappa-\vec{\xi}_\kappa^h)|^2 \dd y\\
	&=\int_{\omega} \partial_{\alpha}(\xi_{\beta,\kappa}-\xi_{\beta,\kappa}^h) \partial_{\alpha}(\xi_{\beta,\kappa}-\Pi_h \xi_{\beta,\kappa}) \dd y\\
	&\qquad+\dfrac{1}{\kappa} \int_{\omega}\left[(-\{u_\kappa-\theta\}^{-})-(-\{u_\kappa^h-\theta\}^{-})\right] (u_\kappa-\Pi_h u_\kappa) \dd y\\
	&\qquad+\dfrac{1}{\kappa}\int_{\omega} \left[\nabla(u_\kappa-u_\kappa^h) -(\vec{\xi}_\kappa-\vec{\xi}_\kappa^h)\right] \cdot \left[\nabla(u_\kappa-\Pi_h u_\kappa) -(\vec{\xi}_\kappa-\vec{\Pi}_h \vec{\xi}_\kappa^h)\right] \dd y\\
	&\le\dfrac{1}{2c_P^2}\|\vec{\xi}_\kappa-\vec{\xi}_\kappa^h\|_{\vec{H}^1_0(\omega)}^2
	+\dfrac{c_P^2}{2} \|\vec{\xi}_\kappa-\vec{\Pi}_h \vec{\xi}_\kappa\|_{\vec{H}^1_0(\omega)}^2\\
	&\qquad+\dfrac{1}{2\kappa}\|(-\{u_\kappa-\theta\}^{-})-(-\{u_\kappa^h-\theta\}^{-})\|_{L^2(\omega)}^2+\dfrac{1}{2\kappa}\|u_\kappa-\Pi_h u_\kappa\|_{H^1_0(\omega)}^2\\
	&\qquad+\dfrac{1}{2\kappa}\|\nabla(u_\kappa-u_\kappa^h) -(\vec{\xi}_\kappa-\vec{\xi}_\kappa^h)\|_{\vec{L}^2(\omega)}^2
	+\dfrac{1}{2\kappa}\|\nabla(u_\kappa-\Pi_h u_\kappa) -(\vec{\xi}_\kappa-\vec{\Pi}_h \vec{\xi}_\kappa^h)\|_{\vec{L}^2(\omega)}^2.
	\end{aligned}
	\end{equation*}
	
	By the standard augmentation-of-regularity argument and the fact that the constraint is inactive near the boundary, we obtain that the semi-norms on the right-hand side of~\eqref{eq5} are both of order $\mathcal{O}(\kappa^{-1})$.
	An application of~\eqref{eq5} thus yields:
	\begin{equation*}
	\label{eq6}
	\begin{aligned}
	&\dfrac{1}{2c_P^2}\|\vec{\xi}_\kappa-\vec{\xi}_\kappa^h\|_{\vec{H}^1_0(\omega)}^2+\dfrac{1}{2\kappa}\|\nabla(u_\kappa-u_\kappa^h) -(\vec{\xi}_\kappa-\vec{\xi}_\kappa^h)\|_{\vec{L}^2(\omega)}^2
	\le \dfrac{c_P^2}{2} \|\vec{\xi}_\kappa-\vec{\Pi}_h \vec{\xi}_\kappa\|_{\vec{H}^1_0(\omega)}^2\\ &\qquad+\dfrac{1}{2\kappa}\|u_\kappa-\Pi_h u_\kappa\|_{H^1_0(\omega)}^2
	+\dfrac{1}{2\kappa}\|\nabla(u_\kappa-\Pi_h u_\kappa) -(\vec{\xi}_\kappa-\vec{\Pi}_h \vec{\xi}_\kappa^h)\|_{\vec{L}^2(\omega)}^2\\
	&\le \dfrac{c_P^2}{2} \|\vec{\xi}_\kappa-\vec{\Pi}_h \vec{\xi}_\kappa\|_{\vec{H}^1_0(\omega)}^2 +\dfrac{1}{2\kappa}\|u_\kappa-\Pi_h u_\kappa\|_{H^1_0(\omega)}^2+\dfrac{1}{\kappa}\|\nabla(u_\kappa-\Pi_h u_\kappa)\|_{\vec{L}^2(\omega)}^2 +\dfrac{1}{\kappa}\|(\vec{\xi}_\kappa-\vec{\Pi}_h \vec{\xi}_\kappa^h)\|_{\vec{L}^2(\omega)}^2\\
	&\le \dfrac{2C^2 \max\{1,c_P^2\}h^2}{\kappa}\left(|u_\kappa|_{H^2(\omega)}^2+|\vec{\xi}_\kappa|_{\vec{H}^2(\omega)}^2\right) \le \dfrac{2C^2\hat{C}^2 \max\{1,c_P^2\}h^2}{\kappa^2},
	\end{aligned}
	\end{equation*}
	where $\hat{C}>0$ in the last inequality is the constant associated with the augmentation-of-regularity argument (cf., e.g., \cite{Nec67}) and depends on $\omega$ and the scaled forcing term only. Let us now observe that and application of the Poincar\'{e}-Friedrichs inequality gives:
	\begin{equation*}
		\dfrac{1}{c_P}\|u_\kappa-u_\kappa^h\|_{H^1_0(\omega)}\le\dfrac{1}{\sqrt{\kappa}}\|\nabla(u_\kappa-u_\kappa^h) -(\vec{\xi}_\kappa-\vec{\xi}_\kappa^h)\|_{\vec{L}^2(\omega)}+\|\vec{\xi}_\kappa-\vec{\xi}_\kappa^h\|_{\vec{L}^2(\omega)}.
	\end{equation*}
	
	A rearrangement of the latter gives:
	\begin{equation*}
	\|u_\kappa^h-u\|_{H^1_0(\omega)} + \|\vec{\xi}_\kappa^h-\nabla u\|_{\vec{H}^1_0(\omega)} \le \dfrac{2\sqrt{2}C\hat{C} \max\{c_P^2,c_P^3\}h}{\kappa}.
	\end{equation*}
	
	The proof is thus complete by letting $\tilde{C}:=2\sqrt{2}C\hat{C} \max\{c_P^2,c_P^3\}$.
\end{proof}

As a remark, if we specialise $\kappa = h^q$ with $0<q<1$ it results, thanks to an application of Theorem~\ref{th:2} and Theorem~\ref{th:3}, that the solution of Problem~\ref{problem4} converges to the solution of Problem~\ref{problem1} as $h\to0^+$. This is in line with the results presented in~\cite{Brenner2013}.
We note in passing that the techniques presented in Theorem~7.1.6 of~\cite{PGCFEM} for discretising the obstacle-free biharmonic problem hinge on a discretisation performed via conforming Finite Elements for fourth-order problems, whereas our technique \emph{only} exploits Courant triangles. Not only the technique we developed requires less regularity for the solution, but - differently from the one in~\cite{CG75} - it can be implemented in Finite Element Analysis packages that do not come with Finite Elements for fourth order problems and with Enriched Finite Elements~\cite{AD23}. The crucial step for achieving this result is the regularity assumption of the given forcing term.

We also observe that the method we presented could be applied for discretising the solution of the biharmonic equation, as an alternative to the method originally proposed by Ciarlet \& Raviart~\cite{CiaRav74}, and then complemented by Ciarlet \& Glowinski~\cite{CiaGlo75}. The differences between these methods and the method we are here proposing are, first, that the method discussed in~\cite{CiaRav74} exploits a finite-dimensional space that is not a subspace of the space where the mixed formulation where the problem is defined, whereas our method discretises the mixed formulation over a finite-dimensional space of the space where the mixed formulation is posed and, second, the solution of the mixed formulation proposed in~\cite{CiaRav74} is to be approximated via Uzawa's method since the primal and dual variables do not vary independently, whereas in the mixed method we proposed the primal and dual variables vary independently.

\section{Numerical approximation of an obstacle problem for linearly elastic shallow shells. The general case}
\label{sec2}

In this section, we denote by $\omega$ a \emph{simply connected} Lipschitz domain.
Following~\cite{CiaMia1992} (see also Section~3.1 of~\cite{Ciarlet1997}), we now recall the \emph{rigorous} definition of a linearly elastic shallow shell (from now on, \emph{shallow shell} for the sake of brevity).

Assume that, for each $\varepsilon>0$, it is given a function $\theta^\varepsilon \in \mathcal{C}^3(\overline{\omega})$ that defines the middle surface of the corresponding shallow shell with thickness equal to $2\varepsilon$. 
According to the \emph{shallowness criterion} (cf., e.g., \cite{Ciarlet1997}), a linearly elastic shell is \emph{shallo}w if and only if there exists a function $\theta \in \mathcal{C}^3(\overline{\omega})$, independent of $\varepsilon$, such that:
\begin{equation}
	\label{critShall}
	\theta^\varepsilon(y)=\varepsilon \theta(y),\quad \textup{ for all }y \in \overline{\omega}.
\end{equation}

The relation in~\eqref{critShall} means that, up to an additive constant, the \emph{deviation} of the middle surface of the reference configuration of the shallow shell from a plane, which is measured via the function $\theta^\varepsilon:\overline{\omega} \to \mathbb{R}$, is of the same order as the thickness of the shell.

The middle surface of the corresponding shallow shell is thus parametrised in Cartesian coordinates by the mapping $\bm{\theta}^\varepsilon:\overline{\omega} \to \mathbb{E}^3$ defined by:
$$
\bm{\theta}^\varepsilon(y):=(y_1,y_2,\theta^\varepsilon(y)),\quad\textup{ for all }y\in\overline{\omega}.
$$

The shallow shells we are considering in this section are all made of a homogeneous and isotropic material, they are clamped on their entire lateral boundary, and they are subjected to the action of applied body forces only. For what concerns surface traction forces, the mathematical models characterised by the confinement condition considered in this paper (confinement condition which is also considered in~\cite{Leger2008} in a less complex geometrical framework) do not take any surface traction forces into account. 
Indeed, there could be no surface traction forces applied to the portion of the shell surface that engages contact with the obstacle.  The elastic behaviour of the shallow shell is then described by means of its two {Lam\'e constants} $\lambda \ge 0$ and $\mu >0$ (cf., e.g., \cite{Ciarlet1988}).

The function space over which the original problem is posed is the following (cf., e.g., \cite{Ciarlet1997}):
$$
\bm{V}(\omega):=\{\bm{\eta}=(\eta_i) \in H^1(\omega)\times H^1(\omega)\times H^2(\omega); \eta_i=\partial_{\vec{\nu}}\eta_3=0 \textup{ on }\gamma\}.
$$ 

The geometrical constraint we subject the shallow shell to amounts to requiring that the reference configuration of the shell and its admissible deformations remain confined in a prescribed half-space, which is identified with a unit-vector $\bm{q}$ that belongs to the orthogonal complement of the plane identifying one such half-space \emph{and} that points towards the prescribed half-space.
The obstacle the shallow shell does not have to cross is thus identified with the plane describing the boundary of the half-space where the shallow shell has to remain confined. As we shall see next, the corresponding confinement condition will have to bear \emph{at once} on all the components of the displacement vector field.
The confinement condition we are here considering is more general than the one considered in the papers~\cite{Leger2008,Leger2010}, where the authors required the shallow shell to lie above the plane $\{x_3=0\}$.
The assumption on the geometry of the obstacle made in~\cite{Leger2008,Leger2010} allowed the exploitation of the fact that the solution of the equilibrium problem for shallow shells is a Kirchhoff-Love field so as to ``separate'' the \emph{variational inequalities} governing the variations in the transverse component of the displacement vector field from the \emph{variational equations} governing the variations in the tangential components of the displacement vector field.
Apart from this, the assumptions on the geometry of the obstacle made in~\cite{Leger2008,Leger2010} ensured the convergence of the Finite Element Method proposed in~\cite{PS} for approximating the solution of the obstacle problem studied in~\cite{Leger2008,Leger2010}. 
For more general obstacle geometries like the ones we are considering in this section, the ``separation'' of the tangential components of the displacement vector field from the transverse component of the displacement vector field does not hold, in general. Moreover, it appears that the techniques proposed in~\cite{PS} are not applicable to the case where the confinement condition hinges at once on all the components of the displacement vector field.

In the same spirit as~\cite{Leger2008,Leger2010}, we assume that:
\begin{equation}
\label{refConf}
\bm{\theta}^\varepsilon \cdot \bm{q} >0,\quad\textup{ in }\overline{\omega}.
\end{equation}

For the sake of notational compactness, define the space associated with the tangential components of the displacement vector field
$$
\vec{V}_H(\omega):=\{\vec{\eta}_H=(\eta_\alpha) \in H^1(\omega)\times H^1(\omega); \eta_\alpha=0 \textup{ on }\gamma\}.
$$

Let $V_3(\omega):=H^2_0(\omega)$. Therefore, we can write:
$$
\bm{V}(\omega)=\vec{V}_H(\omega) \times V_3(\omega).
$$

The displacement vector field must thus to the following subset of the space $\bm{V}(\omega)$:
\begin{equation*}
	\label{K3}
	\bm{U}_K(\omega):=\{\bm{\eta}=(\vec{\eta}_H,\eta_3)\in \bm{V}(\omega); (\bm{\theta}^\varepsilon + \bm{\eta}) \cdot \bm{q} \ge 0\textup{ a.e. in }\omega\}.
\end{equation*}

The subscript ``\emph{K}''in the definition of the set $\bm{U}_K(\omega)$ aptly recalls the connection of the model we are studying with \emph{Koiter's model}~\cite{Koiter1959,Koiter1966,Koiter1970}, which is posed over the same functional space.

Let us observe that the set $\bm{U}_K(\omega)$ is non-empty as $\bm{\eta}=\bm{0} \in \bm{U}_K(\omega)$ in view of the assumption~\eqref{refConf}. It is also straightforward to observe that the set $\bm{U}_K(\omega)$ is closed and convex.

Let $\varepsilon>0$ be given once and for all, and define the set:
$$
\Omega^ \varepsilon := \omega \times (-\varepsilon,\varepsilon).
$$

Let $x^\varepsilon = (x^\varepsilon_i)$ denote a generic point in the set $\overline{\Omega^\varepsilon}$, with $x^\varepsilon_\alpha=y_\alpha$.
We are now ready to state the de-scaled two-dimensional limit obstacle problem - akin to the one recovered in~\cite{Leger2008,Leger2010} - governing the deformation of shallow shells subjected to remaining confined in a prescribed half-space. Note that, differently from other linearly elastic shells models that are formulated in \emph{curvilinear coordinates}, we are here opting for a formulation in \emph{Cartesian coordinates}, since it is judicious to regard shallow shell models as ``\emph{more similar}'' to linearly elastic plate models rather than to linearly elastic shell models.

\begin{customprob}{$\mathcal{P}^\varepsilon(\omega)$}
	\label{problem5}
	Find $\bm{\zeta}^\varepsilon=(\vec{\zeta}_H^\varepsilon, \zeta_3^\varepsilon) \in \bm{U}_K(\omega)$ satisfying:
	\begin{equation*}
		\label{e1}
		\begin{aligned}
			&-\int_{\omega}m_{\alpha\beta}^\varepsilon(\zeta_3^\varepsilon)\partial_{\alpha\beta}(\eta_3-\zeta_3^\varepsilon) \dd y 
			+\int_{\omega} n_{\alpha\beta}^{\theta,\varepsilon}(\bm{\zeta}^\varepsilon) e_{\alpha\beta}^{\theta,\varepsilon}(\bm{\eta}-\bm{\zeta}^\varepsilon) \dd y
			\ge \int_{\omega} p^{i,\varepsilon} (\eta_i-\zeta_i^\varepsilon) \dd y - \int_{\omega} s_\alpha^\varepsilon \partial_\alpha(\eta_3-\zeta_3^\varepsilon)\dd y,
		\end{aligned}
	\end{equation*}
	for all $\bm{\eta}=(\vec{\eta}_H,\eta_3) \in \bm{U}_K(\omega)$, where
	\begin{equation*}
		\begin{cases}
			&\lambda\ge 0, \mu>0\quad\textup{ are the Lam\'e constants},\\
			&c_0(\lambda,\mu):=\dfrac{\lambda\mu}{\lambda+2\mu},\\
			&m_{\alpha\beta}^\varepsilon(\zeta_3^\varepsilon):=-\varepsilon^3\left\{\dfrac{4}{3}c_0(\lambda,\mu) \Delta \zeta_3^\varepsilon \delta_{\alpha\beta}+\dfrac{4}{3}\mu \partial_{\alpha\beta} \zeta_3^\varepsilon\right\},\\
			&e_{\alpha\beta}^{\theta,\varepsilon}(\bm{\zeta}^\varepsilon):=\dfrac{1}{2}(\partial_\alpha \zeta_\beta^\varepsilon+\partial_\beta\zeta_\alpha^\varepsilon)+\dfrac{1}{2}(\partial_\alpha\theta^\varepsilon \partial_\beta\zeta_3^\varepsilon+\partial_\beta\theta^\varepsilon \partial_\alpha\zeta_3^\varepsilon),\\
			&n_{\alpha\beta}^{\theta,\varepsilon}(\bm{\zeta}^\varepsilon):=\varepsilon\left\{4c_0(\lambda,\mu)e_{\sigma\sigma}^{\theta,\varepsilon}(\bm{\zeta}^\varepsilon)\delta_{\alpha\beta}+4\mu e_{\alpha\beta}^{\theta,\varepsilon}(\bm{\zeta}^\varepsilon)\right\},\\
			&p^{i,\varepsilon}:=\int_{-\varepsilon}^{\varepsilon}f_i^\varepsilon \dd x_3^\varepsilon,\\
			&s_\alpha^\varepsilon:=\int_{-\varepsilon}^{\varepsilon} x_3^\varepsilon f_\alpha^\varepsilon \dd x_3^\varepsilon.
		\end{cases}
	\end{equation*}
	\bqed
\end{customprob}

We also make the following assumptions on the scalings of the applied body forces. Recall that these scalings are justified by the rigorous asymptotic analysis carried out in~\cite{Ciarlet1997}. Let us define $\Omega:=\omega\times(-1,1)$. We assume that there exist functions $f_i \in L^2(\Omega )$ \emph{independent of} $\varepsilon$ such that the following \emph{assumptions on the data} hold:
\begin{equation}
\label{scalingsdata}
\begin{aligned}
f^\varepsilon_\alpha(x^\varepsilon)&=\varepsilon^2 f_\alpha(x),\quad\textup{ at each }x=(x_i) \in \Omega,\\
f^\varepsilon_3(x^\varepsilon)&=\varepsilon^3 f_3(x),\quad\textup{ at each }x=(x_i) \in \Omega.
\end{aligned}
\end{equation}

Note that the left-hand side of the variational inequalities in Problem~\ref{problem5} is associated with the symmetric, continuous and $\bm{V}(\omega)$-elliptic (cf., e.g., Theorem~3.6-1 of~\cite{Ciarlet1997}) bilinear form $b^\varepsilon(\cdot,\cdot)$ given by (cf. Sections~3.5, 3.6 and~3.7 of~\cite{Ciarlet1997}):
\begin{equation*}
	b^\varepsilon(\bm{\zeta}^\varepsilon,\bm{\eta})=-\int_{\omega} m_{\alpha\beta}^\varepsilon(\zeta_3^\varepsilon) \partial_{\alpha\beta} \eta_3 \dd y  + \int_{\omega} n_{\alpha\beta}^{\theta,\varepsilon}(\bm{\zeta}^\varepsilon) e_{\alpha\beta}^{\theta,\varepsilon}(\bm{\eta}) \dd y.
\end{equation*}

A straightforward computation shows that:
\begin{equation*}
	b^\varepsilon(\bm{\eta},\bm{\eta}):=\int_{\omega}4c_0(\lambda,\mu)\left\{\dfrac{\varepsilon^3}{3}(\Delta \eta_3)^2+\varepsilon (e_{\sigma\sigma}^{\theta,\varepsilon}(\bm{\eta}))^2\right\} \dd y+4\mu \left\{\dfrac{\varepsilon^3}{3}\sum_{\alpha,\beta}\|\partial_{\alpha\beta}\eta_3\|_{L^2(\omega)}^2+\varepsilon\sum_{\alpha,\beta} \|e_{\alpha\beta}^{\theta,\varepsilon}(\bm{\eta})\|_{L^2(\omega)}^2\right\},
\end{equation*}
for all $\bm{\eta}=(\eta_i)\in\bm{V}(\omega)$.

Likewise, we associate the sum of the right-hand sides of the variational inequalities in Problem~\ref{problem5} with a linear and continuous form $\ell^\varepsilon$ defined as follows:
\begin{equation*}
	\label{l}
	\ell^\varepsilon(\bm{\eta}):=\int_{\omega} p^{i,\varepsilon} \eta_i \dd y - \int_{\omega} s_\alpha^\varepsilon \partial_\alpha\eta_3 \dd y, \quad\textup{ for all } \bm{\eta}=(\eta_i)\in \bm{V}(\omega).
\end{equation*}

Therefore, Problem~\ref{problem5} admits a unique solution $\bm{\zeta}^\varepsilon$. Equivalently, we have that $\bm{\zeta}^\varepsilon \in \bm{U}_K(\omega)$ is the unique element in the set $\bm{U}_K(\omega)$ that minimises the energy functional
$$
J^\varepsilon(\bm{\eta}):=\dfrac{1}{2} b^\varepsilon(\bm{\eta},\bm{\eta}) -\ell^\varepsilon(\bm{\eta}),
$$
over the set $\bm{U}_K(\omega)$.

In order to numerically approximate the solution $\bm{\zeta}^\varepsilon \in \bm{U}_K(\omega)$ of Problem~\ref{problem5} via the Finite Element Method, we first state the penalised version of Problem~\ref{problem5}, that will still be a fourth order problem, and we will then state the corresponding penalised mixed formulation, in the same fashion as what has been done in section~\ref{sec1}.

Once again, we denote by $\kappa$ a positive penalty parameter that is meant to approach zero. Define the mapping $\bm{\beta}^\varepsilon:\bm{L}^2(\omega) \to \bm{L}^2(\omega)$ by:
\begin{equation*}
\bm{\beta}^\varepsilon(\bm{\eta}):=-\{(\bm{\theta}^\varepsilon+\bm{\eta})\cdot \bm{q}\}^{-} \bm{q},\quad\textup{ for all }\bm{\eta} \in \bm{V}(\omega).
\end{equation*}

For each $\kappa>0$, consider the following \emph{penalised energy functional}:
\begin{equation*}
J^\varepsilon_\kappa(\bm{\eta}):=\dfrac{1}{2} b^\varepsilon(\bm{\eta},\bm{\eta})+\dfrac{\varepsilon^3}{2\kappa} \int_{\omega}|\bm{\beta}^\varepsilon(\bm{\eta})|^2 \dd y -\ell^\varepsilon(\bm{\eta}),\quad\textup{ for all }\bm{\eta} \in \bm{V}(\omega).
\end{equation*}

The penalty term we inserted into the expression for the energy functional $J^\varepsilon_\kappa$ is meant to \emph{prefer} equlibria that satisfy the geometrical constraint according to which the shell has to remain confined in the prescribed half-space, in order to prevent the blow-up of the corresponding penalised energy.
The penalised version of Problem~\ref{problem5} is denoted by $\mathcal{P}_\kappa^\varepsilon(\omega)$, and takes the following form.

\begin{customprob}{$\mathcal{P}^\varepsilon_\kappa(\omega)$}
	\label{problem6}
	Find $\bm{\zeta}^\varepsilon_\kappa=(\vec{\zeta}_{H,\kappa}^\varepsilon, \zeta_{3,\kappa}^\varepsilon) \in \bm{V}(\omega)$ satisfying:
	\begin{equation*}
	-\int_{\omega}m_{\alpha\beta}^\varepsilon(\zeta_{3,\kappa}^\varepsilon)(\partial_{\alpha\beta}\eta_3) \dd y 
	+\int_{\omega} n_{\alpha\beta}^{\theta,\varepsilon}(\bm{\zeta}^\varepsilon_\kappa) e_{\alpha\beta}^{\theta,\varepsilon}(\bm{\eta}) \dd y
	+\dfrac{\varepsilon^3}{\kappa}\int_{\omega} \bm{\beta}^\varepsilon(\bm{\zeta}^\varepsilon_\kappa) \cdot \bm{\eta} \dd y
	= \int_{\omega} p^{i,\varepsilon} \eta_i \dd y - \int_{\omega} s_\alpha^\varepsilon \partial_\alpha\eta_3\dd y,
	\end{equation*}
	for all $\bm{\eta}=(\vec{\eta}_H,\eta_3) \in \bm{V}(\omega)$.
	\bqed
\end{customprob}

Since the operator $\bm{\beta}^\varepsilon$ defined beforehand is clearly hemi-continuous and strongly monotone, since the bilinear form $b^\varepsilon(\cdot,\cdot)$ is $\bm{V}(\omega)$-elliptic, and since the linear form $\ell^\varepsilon$ is continuous, an application of the Minty-Browder Theorem (cf., e.g., Theorem~9.14-1 in~\cite{PGCLNFAA}) ensures the existence and uniqueness of solutions for Problem~\ref{problem6}. Equivalently, this means that, for a given $\kappa>0$, there exists a unique element $\bm{\zeta}^\varepsilon_\kappa \in \bm{V}(\omega)$ such that
\begin{equation*}
J^\varepsilon_\kappa(\bm{\zeta}^\varepsilon_\kappa)=\min\{J^\varepsilon_\kappa(\bm{\eta}); \bm{\eta} \in \bm{V}(\omega)\}.
\end{equation*}

At this point, we observe that a discretisation of Problem~\ref{problem6} via the Finite Element Method is not possible without resorting to conforming or non-conforming Finite Elements for approximating the transverse component of the displacement vector field. Therefore, this kind of formulation is not amenable as the latter Finite Elements are currently not implemented in a number of Finite Element Analysis.

One could think to directly work on the approximation of the solution of the original variational inequalities stated in Problem~\ref{problem5} via Enriching Operators, following the ideas in~\cite{PS}. However, pursuing this strategy would eventually let the following problems arise. First, enriched Finite Elements for fourth order problems are currently not implemented in a number of Finite Element Analysis libraries and, second, the techniques developed in~\cite{PS} seem to work only when the geometrical constraint is \emph{solely} expressed in terms of the transverse component of the displacement vector field. Therefore, in light of the latter statement, it seems that the techniques presented in~\cite{PS} are not applicable to the case we are considering, which is the case where the obstacle is a \emph{generic} plane and the constraint hinges \emph{at once} on all the components of the displacement vector field.

In order to overcome these difficulties, we introduce - in the same fashion as section~\ref{sec1} - a mixed formulation for Problem~\ref{problem5}. Prior to doing so, however, we need to assume - as in section~\ref{sec1} - that there exists a vector field $\vec{P}^\varepsilon \in H(\textup{div};\omega)$ such that $\textup{div }\vec{P}^\varepsilon = p^{3,\varepsilon}$ a.e. in $\omega$ (cf., e.g., Theorem~6.14-1 in~\cite{PGCLNFAA}).

\begin{customprob}{$\mathcal{Q}^\varepsilon(\omega)$}
	\label{problem7}
	Find $(\bm{\zeta}^\varepsilon,\vec{\xi}^\varepsilon) \in \mathbb{U}_K(\omega):=\{(\bm{\eta},\vec{\varphi}) \in \bm{H}^1_0(\omega) \times \vec{H}^1_0(\omega); \vec{\varphi} = \nabla \eta_3 \textup{ a.e. in }\omega \textup{ and } (\bm{\theta}^\varepsilon +\bm{\eta}) \cdot\bm{q} \ge 0 \textup{ a.e. in }\omega \}$ satisfying:
	\begin{equation*}
	\begin{aligned}
	&\dfrac{\varepsilon^3}{3} \int_{\omega}\left\{4c_0(\lambda,\mu)(\textup{div }\vec{\xi}^\varepsilon) \delta_{\alpha\beta} +4\mu \partial_{\alpha}\xi^\varepsilon_\beta \right\}\partial_{\alpha}(\varphi_\beta-\xi^\varepsilon_\beta) \dd y
	+\int_{\omega} n_{\alpha\beta}^{\theta,\varepsilon}(\vec{\zeta}^\varepsilon_H,\vec{\xi}^\varepsilon) e_{\alpha\beta}^{\theta,\varepsilon}(\vec{\eta}_H-\vec{\zeta}^\varepsilon_H,\vec{\varphi}-\vec{\xi}^\varepsilon) \dd y\\
	&\ge \int_{\omega} p^{\alpha,\varepsilon} (\eta_\alpha-\zeta_\alpha^\varepsilon) \dd y 
	-\int_{\omega} \vec{P}^\varepsilon \cdot (\vec{\varphi}-\vec{\xi}^\varepsilon) \dd y
	- \int_{\omega} s_\alpha^\varepsilon (\varphi_\alpha-\xi^\varepsilon_\alpha) \dd y,
	\end{aligned}
	\end{equation*}
	for all $(\bm{\eta},\vec{\varphi}) \in \mathbb{U}_K(\omega)$, where
		\begin{equation*}
	\begin{cases}
	&\lambda\ge 0, \mu>0\quad\textup{ are the Lam\'e constants},\\
	&c_0(\lambda,\mu)=\dfrac{\lambda\mu}{\lambda+2\mu},\\
	&e_{\alpha\beta}^{\theta,\varepsilon}(\vec{\zeta}^\varepsilon_H,\vec{\xi}^\varepsilon):=\dfrac{1}{2}(\partial_\alpha \zeta_\beta^\varepsilon+\partial_\beta\zeta_\alpha^\varepsilon)+\dfrac{1}{2}\left((\partial_\alpha\theta^\varepsilon) \xi^\varepsilon_\beta +(\partial_\beta\theta^\varepsilon) \xi^\varepsilon_\alpha\right),\\
	&n_{\alpha\beta}^{\theta,\varepsilon}(\vec{\zeta}^\varepsilon_H,\vec{\xi}^\varepsilon):=\varepsilon\left\{4c_0(\lambda,\mu)e_{\sigma\sigma}^{\theta,\varepsilon}(\vec{\zeta}^\varepsilon_H,\vec{\xi}^\varepsilon)\delta_{\alpha\beta}+4\mu e_{\alpha\beta}^{\theta,\varepsilon}(\vec{\zeta}^\varepsilon_H,\vec{\xi}^\varepsilon)\right\},\\
	&p^{i,\varepsilon}:=\int_{-\varepsilon}^{\varepsilon}f_i^\varepsilon \dd x_3^\varepsilon,\\
	&p^{3,\varepsilon} = \textup{div }\vec{P}^\varepsilon,\\
	&s_\alpha^\varepsilon:=\int_{-\varepsilon}^{\varepsilon} x_3^\varepsilon f_\alpha^\varepsilon \dd x_3^\varepsilon.
	\end{cases}
	\end{equation*}
	\bqed
\end{customprob}

Clearly, Problem~\ref{problem7} admits a unique solution as a re-writing of Problem~\ref{problem5}. It is straightforward to observe that the set $\mathbb{U}_K(\omega)$ is non-empty, closed and convex, and that the components of the tensor $\bm{e}^{\theta,\varepsilon}(\vec{\eta}_H,\vec{\varphi})=(e_{\alpha\beta}^{\theta,\varepsilon}(\vec{\eta}_H,\vec{\varphi}))$ are linear and continuous with respect to the natural norm of the vector space $\bm{H}^1_0(\omega) \times \vec{H}^1_0(\omega)$.

In the same fashion as in section~\ref{sec1}, we add a further penalty term which is meant to steer the solutions towards states where the \emph{dual variable} is \emph{ as close as possible} to the gradient of the transverse component of the \emph{primal variable}.

\begin{customprob}{$\mathcal{Q}^\varepsilon_\kappa(\omega)$}
	\label{problem8}
	Find $(\bm{\zeta}^\varepsilon_\kappa,\vec{\xi}^\varepsilon_\kappa) \in \bm{H}^1_0(\omega) \times \vec{H}^1_0(\omega)$ such that $\nabla \vec{\xi}^\varepsilon_\kappa = (\nabla \vec{\xi}^\varepsilon_\kappa)^T$ a.e. in $\omega$, and satisfying:
	\begin{equation*}
	\begin{aligned}
	&\dfrac{\varepsilon^3}{3} \int_{\omega}\left\{4c_0(\lambda,\mu)(\textup{div }\vec{\xi}^\varepsilon_\kappa) \delta_{\alpha\beta} +4\mu \partial_{\alpha}\xi^\varepsilon_{\beta,\kappa} \right\} (\partial_{\alpha}\varphi_\beta) \dd y+\int_{\omega} n_{\alpha\beta}^{\theta,\varepsilon}(\vec{\zeta}^\varepsilon_{H,\kappa},\vec{\xi}^\varepsilon_\kappa) e_{\alpha\beta}^{\theta,\varepsilon}(\vec{\eta}_H,\vec{\varphi}) \dd y\\
	&\qquad+\dfrac{\varepsilon^3}{\kappa}\int_{\omega} (\vec{\xi}^\varepsilon_\kappa - \nabla \zeta^\varepsilon_{3,\kappa}) \cdot (\vec{\varphi} - \nabla \eta_3) \dd y+\dfrac{\varepsilon^3}{\kappa}\int_{\omega} \bm{\beta}^\varepsilon(\bm{\zeta}^\varepsilon_\kappa) \cdot \bm{\eta} \dd y
	= \int_{\omega} p^{\alpha,\varepsilon} \eta_\alpha \dd y -\int_{\omega} \vec{P}^\varepsilon \cdot \vec{\varphi} \dd y - \int_{\omega} s_\alpha^\varepsilon \varphi_\alpha \dd y,
	\end{aligned}
	\end{equation*}
	for all $(\bm{\eta},\vec{\varphi}) \in \bm{H}^1_0(\omega) \times \vec{H}^1_0(\omega)$ such that $\nabla \vec{\varphi} = (\nabla \vec{\varphi})^T$ a.e. in $\omega$.
	\bqed
\end{customprob}

As a remark, we observe that the choice for the powers of the parameter $\varepsilon$ - which, we recall, is regarded as fixed - made in Problem~\ref{problem6} and Problem~\ref{problem8} are derived from the de-scalings descending from the asymptotic analysis carried out in Theorem~3.5-1 of~\cite{Ciarlet1997}. We anticipate that these de-scalings will play no role in determining the convergence of the numerical schemes for approximating the solution of Problem~\ref{problem5}. As a result of the de-scalings for Problem~\ref{problem5}, we consider the following regularity-preserving natural de-scalings for the solutions of Problem~\ref{problem8} below:
\begin{equation}
	\label{descalings-1}
	\begin{aligned}
		\zeta^\varepsilon_{\alpha,\kappa} &= \varepsilon^2 \zeta_{\alpha,\kappa},\\
		\zeta^\varepsilon_{3,\kappa} &= \varepsilon \zeta_{3,\kappa},\\
		\xi^\varepsilon_{\alpha,\kappa} &=\varepsilon \xi_{\alpha,\kappa}.
	\end{aligned}
\end{equation}

We now establish the existence an uniqueness of solutions for Problem~\ref{problem8}. The first step to achieve this goal consists in establishing the coerciveness of the left-hand side of the variational equations appearing in Problem~\ref{problem8} by means of an inequality of Korn's type in \emph{mixed coordinates}. Critical to establishing this estimate is the assumed \emph{simple connectedness} of the Lipschitz domain $\omega$, which will allow us to apply the weak Poincar\'e lemma (cf., e.g., Theorem~6.17-4 of~\cite{PGCLNFAA}).

\begin{theorem}
\label{KornMixed}
Let $\omega \subset \mathbb{R}^2$ be a simply connected Lipschitz domain. Let $\theta \in \mathcal{C}^3(\overline{\omega})$ be given.
Then, there exists a constant $C_K>0$ such that:
\begin{equation*}
	\left\{\|\vec{\eta}_H\|_{\vec{H}^1_0(\omega)}^2+\|\vec{\varphi}\|_{\vec{H}^1_0(\omega)}^2\right\}^{1/2}\le C_K \left\{\dfrac{1}{3}\sum_{\alpha,\beta} \|\partial_{\alpha}\varphi_\beta\|_{L^2(\omega)}^2 + \sum_{\alpha,\beta} \|e_{\alpha\beta}^\theta(\vec{\eta}_H,\vec{\varphi})\|_{L^2(\omega)}^2\right\}^{1/2},
\end{equation*}
for all $(\vec{\eta}_H,\vec{\varphi}) \in \vec{H}^1_0(\omega) \times \vec{H}^1_0(\omega)$ such that $\nabla \vec{\varphi} = (\nabla \vec{\varphi})^T$ a.e. in $\omega$.
\end{theorem}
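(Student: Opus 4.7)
The plan is to reduce this mixed Korn inequality to the classical Korn inequality for linearly elastic shallow shells (Theorem~3.6-1 of~\cite{Ciarlet1997}). The bridge is the weak Poincar\'e lemma (Theorem~6.17-4 of~\cite{PGCLNFAA}), which requires the simple connectedness of $\omega$: it will let us recover a scalar potential $\psi$ for the ``dual'' vector field $\vec\varphi$, after which the given pair $(\vec\eta_H,\vec\varphi)$ becomes exactly the tangential displacement--transverse displacement pair of a shallow shell, and the asserted inequality becomes the standard Korn estimate for such shells.

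First, I observe that the hypothesis $\nabla \vec\varphi = (\nabla \vec\varphi)^T$ a.e.\ in $\omega$ is the statement $\partial_\alpha \varphi_\beta - \partial_\beta \varphi_\alpha = 0$ in $\mathcal{D}'(\omega)$. Since $\vec\varphi \in \vec{H}^1_0(\omega) \subset \vec{L}^2(\omega)$ and $\omega$ is simply connected, the weak Poincar\'e lemma produces some $\psi \in H^2(\omega)$, unique up to an additive constant, such that $\vec\varphi = \nabla \psi$ a.e.\ in $\omega$. Next, the boundary condition $\vec\varphi = \vec 0$ on $\gamma$ forces both the normal and tangential traces of $\nabla \psi$ to vanish along $\gamma$; in particular, the tangential derivative of $\psi$ vanishes, so $\psi$ is constant on each connected component of $\gamma$. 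Since $\omega \subset \mathbb{R}^2$ is simply connected and Lipschitz, $\gamma$ is connected, and this constant may be subtracted to obtain $\psi \in H^2_0(\omega)$, still satisfying $\vec\varphi = \nabla \psi$.

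With this identification the right-hand side of the target inequality is recast in shallow-shell form: $e^\theta_{\alpha\beta}(\vec\eta_H, \vec\varphi)$ coincides with the classical shallow-shell strain tensor of the pair $(\vec\eta_H, \psi)$ (with $\psi$ playing the role of the transverse displacement $\zeta_3$), while $\partial_\alpha \varphi_\beta = \partial_{\alpha\beta}\psi$ implies $\sum_{\alpha,\beta} \|\partial_\alpha \varphi_\beta\|^2_{L^2(\omega)} = |\psi|^2_{H^2_0(\omega)}$. The Korn-type coerciveness estimate of Theorem~3.6-1 in~\cite{Ciarlet1997}, applied to $(\vec\eta_H,\psi) \in \vec{V}_H(\omega) \times H^2_0(\omega)$, then yields a constant $C>0$ depending only on $\omega$ and $\theta$ such that
\[
\|\vec\eta_H\|^2_{\vec{H}^1_0(\omega)} + \|\psi\|^2_{H^2_0(\omega)} \le C\Big(\tfrac{1}{3}|\psi|^2_{H^2_0(\omega)} + \textstyle\sum_{\alpha,\beta}\|e^\theta_{\alpha\beta}(\vec\eta_H,\nabla\psi)\|^2_{L^2(\omega)}\Big).
\]
Combining this with the bound $\|\vec\varphi\|^2_{\vec{H}^1_0(\omega)} \le c_P^2 |\psi|^2_{H^2_0(\omega)}$ coming from the Poincar\'e--Friedrichs inequality applied componentwise to $\vec\varphi = \nabla \psi$ produces the desired inequality with some $C_K>0$.

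The main obstacle is the boundary step: upgrading $\psi$ from $H^2(\omega)$, as delivered by the Poincar\'e lemma, to $H^2_0(\omega)$. It is exactly here that the simple connectedness of $\omega$ is indispensable, for otherwise the tangential-derivative argument only forces $\psi$ to be \emph{piecewise} constant on $\gamma$, and no single additive constant can absorb all those boundary values at once. Everything else in the proof is either an algebraic rewriting made possible by the identification $\vec\varphi = \nabla\psi$, or a direct invocation of Poincar\'e--Friedrichs and the standard shallow-shell Korn inequality.
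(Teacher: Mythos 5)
Your proof is correct, but it takes a genuinely different (and shorter) route than the paper's. Both arguments open by exploiting the symmetry of $\nabla\vec{\varphi}$ and the simple connectedness of $\omega$ to invoke the weak Poincar\'e lemma, and both upgrade the resulting scalar potential to $H^2_0(\omega)$ using the vanishing of $\vec{\varphi}=\nabla\psi$ on $\gamma$ together with the connectedness of $\gamma$. From there the two diverge. The paper works at the \emph{three-dimensional} level: it constructs a Kirchhoff--Love-type field $v_\alpha=\eta_\alpha-x_3\varphi_\alpha$, $v_3=\tilde\eta_3$ on $\Omega=\omega\times(-1,1)$, computes the linearised strains $\bar e^\theta_{ij}(\bm v)$ explicitly (in particular $\bar e^\theta_{\alpha 3}(\bm v)=\bar e^\theta_{33}(\bm v)=0$ and $\bar e^\theta_{\alpha\beta}(\bm v)=e^\theta_{\alpha\beta}(\vec\eta_H,\vec\varphi)-x_3\partial_\alpha\varphi_\beta$, where the last identity itself needs the symmetry of $\nabla\vec\varphi$), integrates out $x_3$, and then applies the \emph{three-dimensional} generalised Korn inequality of Theorem~3.4-1 of~\cite{Ciarlet1997}. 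You instead recognise that once $\psi\in H^2_0(\omega)$ is in hand, $e^\theta_{\alpha\beta}(\vec\eta_H,\nabla\psi)$ is exactly the shallow-shell strain of the pair $(\vec\eta_H,\psi)\in\vec V_H(\omega)\times H^2_0(\omega)$ and $\sum_{\alpha,\beta}\|\partial_\alpha\varphi_\beta\|^2_{L^2}=|\psi|^2_{H^2}$, so you can invoke the \emph{two-dimensional} shallow-shell coerciveness estimate (Theorem~3.6-1 of~\cite{Ciarlet1997}) in one stroke and finish with a componentwise Poincar\'e--Friedrichs bound for $\|\vec\varphi\|_{\vec H^1_0(\omega)}$. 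Your route buys concision at the cost of black-boxing the dimension reduction (the proof of Theorem~3.6-1 in Ciarlet's book is, in effect, the very Kirchhoff--Love construction the paper writes out), whereas the paper's version is more self-contained and makes visible exactly where the symmetry hypothesis on $\nabla\vec\varphi$ enters the strain computation. Both are sound.
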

\begin{proof}	
	Let $\Omega:=\omega \times (-1,1)$ and define the space $\bm{V}(\Omega):=\{\bm{w}=(w_i) \in \bm{H}^1(\Omega); w_i =0 \textup{ on }\gamma \times (-1,1)\}$. For each $\bm{v}=(v_i) \in \bm{V}(\Omega)$, define the tensor $\bar{\bm{e}}^\theta(\bm{v})=(\bar{e}_{\alpha\beta}^\theta(\bm{v}))$ as follows (cf., e.g., Theorem~3.4-1 of~\cite{Ciarlet1997}):
	\begin{equation*}
	\begin{aligned}
		\bar{e}_{\alpha\beta}^\theta(\bm{v})&:=\dfrac{1}{2}(\partial_{\alpha}v_\beta+\partial_{\beta}v_\alpha)-\dfrac{1}{2}\left((\partial_{\beta}\theta)(\partial_3 v_\alpha)+(\partial_{\alpha}\theta)(\partial_3 v_\beta)\right),\\
		\bar{e}_{\alpha3}^\theta(\bm{v})&:=\dfrac{1}{2}(\partial_{\alpha}v_3+\partial_3 v_\alpha)-\dfrac{1}{2}(\partial_{\alpha}\theta)(\partial_3 v_3),\\
		\bar{e}_{33}^\theta(\bm{v})&:=\partial_3 v_3.
	\end{aligned}
	\end{equation*}
	
	Let us recall that a generalised inequality of Korn's type (cf., e.g., Theorem~3.4-1 of~\cite{Ciarlet1997} for the notation appearing in the next inequality) with homogeneous Dirichlet boundary conditions on $\gamma\times (-1,1)$ gives that there exists a constant $C>0$ such that:
	\begin{equation*}
		|\bm{v}|_{\bm{H}^1(\Omega)} \le C \left\{\sum_{i,j}\|\bar{e}_{ij}^{\theta}(\bm{v})\|_{L^2(\Omega)}^2\right\}^{1/2}, \quad\textup{ for all } \bm{v} \in \bm{V}(\Omega).
	\end{equation*}
	
	Observe that the symmetry of $\nabla\vec{\varphi}$ implies that $\textup{curl }\vec{\varphi}=0 \in \mathbb{R}$ a.e. in $\omega$.
	An application of the weak Poincar\'e lemma (cf., e.g., Theorem~6.17-4 of~\cite{PGCLNFAA}) gives that there exists a function $\eta_3 \in L^2(\omega)$ such that
	\begin{equation}
		\label{Poincare}
		\nabla \eta_3 = \vec{\varphi},\quad\textup{ in }\vec{L}^2(\omega),
	\end{equation}
	and, moreover, any other function $\tilde{\eta}_3 \in L^2(\omega)$ such that $\nabla\tilde{\eta}_3=\vec{\varphi}$ a.e. in $\omega$ is of the form $\tilde{\eta}_3=\eta_3+\tilde{C}$, for some constant $\tilde{C} \in \mathbb{R}$.
	Observe that, since $\eta_3 \in H^1(\omega)$ and that $\nabla \eta_3 =\vec{0}$ on $\gamma$, we can apply Lemma~8.1 in~\cite{Brez11} on each of the \emph{overlapping} local charts covering the boundary of the boundary $\gamma$ so as to infer that $\eta_3$ is constant along $\gamma$. Letting $\tilde{C}= -\eta_3|_{\gamma}$ we obtain that $\tilde{\eta}_3 \in H^1_0(\omega)$.
	
	In light of~\eqref{Poincare}, we are in a position to define the vector field $\bm{v}=(v_i) \in \bm{V}(\Omega)$ as follows:
	\begin{equation*}
		\begin{aligned}
			v_\alpha(y,x_3)&:=\eta_\alpha -x_3 \varphi_\alpha, \quad(y,x_3) \in \Omega,\\
			v_3(y,x_3)&:= \tilde{\eta}_3(y), \quad (y,x_3) \in \Omega.
		\end{aligned}
	\end{equation*}
	Therefore, it is immediate to verify the following identities:
	\begin{equation*}
		\begin{aligned}
			\bar{e}_{\alpha3}^\theta(\bm{v})&=\dfrac{\partial_{\alpha}\tilde{\eta}_3+\partial_3 v_\alpha}{2}-\dfrac{1}{2}(\partial_{\alpha} \theta) (\partial_3 \tilde{\eta}_3)
			=0,\quad\textup{ a.e in }\omega,\\
			\bar{e}_{33}^\theta(\bm{v})&=0,\quad\textup{ a.e in }\omega,\\
			\bar{e}_{\alpha\beta}^\theta(\bm{v})&=\dfrac{1}{2}\left(\partial_{\alpha}\eta_\beta+\partial_{\beta}\eta_\alpha-x_3(\partial_{\alpha}\varphi_\beta+\partial_{\beta}\varphi_\alpha)\right)+\dfrac{1}{2}\left((\partial_{\beta}\theta)\varphi_\alpha+(\partial_{\alpha}\theta)\varphi_\beta\right)\\
			&=\dfrac{1}{2}\left(\partial_{\alpha}\eta_\beta+\partial_{\beta}\eta_\alpha+(\partial_{\beta}\theta)\varphi_\alpha+(\partial_{\alpha}\theta)\varphi_\beta\right)-\dfrac{x_3}{2}(\partial_{\alpha}\varphi_\beta+\partial_{\beta}\varphi_\alpha)\\
			&=e_{\alpha\beta}^\theta(\vec{\eta}_H,\vec{\varphi})-x_3 \partial_{\alpha}\varphi_\beta, \quad\textup{ a.e in }\omega.
		\end{aligned}
	\end{equation*}
	
	Note that the last equality in the algebraic manipulation of the terms $\bar{e}_{\alpha\beta}^\theta(\bm{v})$ descends as a result of the assumed symmetry for $\nabla \vec{\varphi}$. We denote by $e_{\alpha\beta}^\theta(\vec{\eta}_H,\vec{\varphi})$ the components of the tensor introduced in Problem~\ref{problem7} in the special case where $\varepsilon=1$.
	A straightforward computation gives
	\begin{equation}
		\label{eq7}
		\begin{aligned}
		&\sum_{i,j}\|\bar{e}_{ij}^\theta(\bm{v})\|_{L^2(\Omega)}^2 = \sum_{\alpha,\beta}\|\bar{e}_{\alpha\beta}^\theta(\bm{v})\|_{L^2(\Omega)}^2
		=\sum_{\alpha,\beta}\int_{\Omega} \left\{|e_{\alpha\beta}^\theta(\vec{\eta}_H,\vec{\varphi})|^2 +x_3^2 |\partial_{\alpha}\varphi_\beta|^2\right\} \dd y \dd x_3\\
		&=2\sum_{\alpha,\beta} \|e_{\alpha\beta}^\theta(\vec{\eta}_H,\vec{\varphi})\|_{L^2(\omega)}^2+\dfrac{2}{3}\sum_{\alpha,\beta}\|\partial_{\beta}\varphi_\alpha\|_{L^2(\omega)}^2,
	\end{aligned}
	\end{equation}
	on the one hand. On the other hand, the definition of $v_3$ gives that:
	\begin{equation}
		\label{eq8}
		\begin{aligned}
		&|\bm{v}|_{\bm{H}^1(\Omega)}^2=\|\partial_1 \eta_1-x_3 \partial_1\varphi_1\|_{L^2(\Omega)}^2+\|\partial_2 \eta_1-x_3 \partial_2\varphi_1\|_{L^2(\Omega)}^2+\|\partial_1 \eta_2-x_3 \partial_1\varphi_2\|_{L^2(\Omega)}^2\\
		&\qquad+\|\partial_2\eta_2-x_3 \partial_2\varphi_2\|_{L^2(\Omega)}^2+2\sum_{\alpha}\|\varphi_\alpha\|_{L^2(\Omega)}^2\\
		&=2\sum_{\alpha,\beta}\|\partial_{\beta}\eta_\alpha\|_{L^2(\omega)}^2+\dfrac{2}{3}\sum_{\alpha,\beta}\|\partial_{\beta}\varphi_\alpha\|_{L^2(\omega)}^2+4\sum_{\alpha}\|\varphi_\alpha\|_{L^2(\omega)}^2.
		\end{aligned}
	\end{equation}
	
	Combining~\eqref{eq7}, \eqref{eq8} and the generalised Korn's inequality in Theorem~3.4-1 of~\cite{Ciarlet1997} and the standard Poincar\'e-Friedrichs inequality (cf., e.g., Theorem~6.5-2 of~\cite{PGCLNFAA}), we obtain that:
	\begin{equation*}
		\begin{aligned}
		&\sum_{\alpha,\beta} \|e_{\alpha\beta}^\theta(\vec{\eta}_H,\vec{\varphi})\|_{L^2(\omega)}^2+\dfrac{1}{3}\sum_{\alpha,\beta}\|\partial_{\beta}\varphi_\alpha\|_{L^2(\omega)}^2\\
		&\ge \dfrac{1}{3C^2}\sum_{\alpha,\beta}\left(\|\partial_{\beta}\eta_\alpha\|_{L^2(\omega)}^2+\|\partial_{\beta}\varphi_\alpha\|_{L^2(\omega)}^2\right)
		\ge \dfrac{1}{3c_P^2 C^2}\left(\|\vec{\eta}_H\|_{\vec{H}^1_0(\omega)}^2+\|\vec{\varphi}\|_{\vec{H}^1_0(\omega)}^2\right).
		\end{aligned}
	\end{equation*}
	The proof is complete by letting $C_K:=\sqrt{3}c_PC$.
\end{proof}

We are now ready to establish the existence and uniqueness of solutions for Problem~\ref{problem8}.

\begin{theorem}
	\label{exunp8}
	For each $\varepsilon>0$ and for each $\kappa>0$, Problem~\ref{problem8} admits a unique solution $(\bm{\zeta}^\varepsilon_\kappa,\vec{\xi}^\varepsilon_\kappa)$. Besides, we have that the following convergences hold
	\begin{align*}
	\bm{\zeta}^\varepsilon_\kappa \rightharpoonup \bm{\zeta}^\varepsilon&, \textup{ in }\bm{H}^1_0(\omega) \textup{ as }\kappa \to 0,\\
	\vec{\xi}^\varepsilon_\kappa \rightharpoonup \nabla \zeta^\varepsilon_3&, \textup{ in }\vec{H}^1_0(\omega) \textup{ as }\kappa \to 0,
	\end{align*}
	where $\bm{\zeta}^\varepsilon \in \bm{U}_K(\omega)$ is the unique solution of Problem~\ref{problem5}.
\end{theorem}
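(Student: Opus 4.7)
The plan is to verify the hypotheses of the Minty-Browder theorem (Theorem 9.14-1 of \cite{PGCLNFAA}) on the closed linear subspace
\begin{equation*}
\mathbb{W}(\omega):=\{(\bm{\eta},\vec{\varphi}) \in \bm{H}^1_0(\omega) \times \vec{H}^1_0(\omega); \nabla \vec{\varphi} = (\nabla \vec{\varphi})^T \textup{ a.e. in }\omega\}
\end{equation*}
of $\bm{H}^1_0(\omega) \times \vec{H}^1_0(\omega)$. The left-hand side of Problem~\ref{problem8} decomposes into a symmetric, continuous, bilinear form (the harmonic corrector, the $\partial_{\alpha}\xi^\varepsilon_{\beta,\kappa}$-block, the membrane bilinear form associated with $n_{\alpha\beta}^{\theta,\varepsilon}$, and the quadratic dual-gap term) plus the hemicontinuous, monotone operator coming from $\bm{\beta}^\varepsilon$. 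The linear form on the right-hand side is continuous by the assumed regularity of $\vec{P}^\varepsilon$. The only non-routine point is \emph{coerciveness} of the bilinear part on $\mathbb{W}(\omega)$.

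For coerciveness, I would test with $(\bm{\eta},\vec{\varphi})=(\bm{\zeta}^\varepsilon_\kappa,\vec{\xi}^\varepsilon_\kappa)$. The bilinear terms collect the quantity
\begin{equation*}
\varepsilon^3 \kappa \|\nabla \zeta^\varepsilon_{3,\kappa}\|_{\vec{L}^2(\omega)}^2+\dfrac{4\varepsilon^3\mu}{3}\sum_{\alpha,\beta}\|\partial_{\alpha}\xi^\varepsilon_{\beta,\kappa}\|_{L^2(\omega)}^2+4\varepsilon \mu \sum_{\alpha,\beta}\|e_{\alpha\beta}^{\theta,\varepsilon}(\vec{\zeta}^\varepsilon_{H,\kappa},\vec{\xi}^\varepsilon_\kappa)\|_{L^2(\omega)}^2,
\end{equation*}
up to the non-negative $\lambda$-dependent trace term and the two non-negative penalty contributions $\tfrac{\varepsilon^3}{\kappa}\|\vec{\xi}^\varepsilon_\kappa-\nabla \zeta^\varepsilon_{3,\kappa}\|^2_{\vec{L}^2(\omega)}$ and $\tfrac{\varepsilon^3}{\kappa}\int_\omega \bm{\beta}^\varepsilon(\bm{\zeta}^\varepsilon_\kappa)\cdot \bm{\zeta}^\varepsilon_\kappa \dd y$ (monotonicity of $\{\cdot\}^-$). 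Applying Theorem~\ref{KornMixed} (admissible because $(\vec{\zeta}^\varepsilon_{H,\kappa},\vec{\xi}^\varepsilon_\kappa)$ lies in the symmetry-constrained space) controls $\|\vec{\zeta}^\varepsilon_{H,\kappa}\|_{\vec{H}^1_0(\omega)}^2+\|\vec{\xi}^\varepsilon_\kappa\|_{\vec{H}^1_0(\omega)}^2$, whereas the harmonic-corrector contribution together with the Poincar\'e-Friedrichs inequality controls $\kappa\|\zeta^\varepsilon_{3,\kappa}\|_{H^1_0(\omega)}^2$. Combined with an absorption via H\"older's and Young's inequalities on the right-hand side, this yields uniform boundedness of $(\vec{\zeta}^\varepsilon_{H,\kappa},\vec{\xi}^\varepsilon_\kappa)$ in $\mathbb{W}(\omega)$ independently of $\kappa$, together with the auxiliary controls
\begin{equation*}
\|\bm{\beta}^\varepsilon(\bm{\zeta}^\varepsilon_\kappa)\|_{\bm{L}^2(\omega)} \le C\sqrt{\kappa},\qquad \|\vec{\xi}^\varepsilon_\kappa - \nabla \zeta^\varepsilon_{3,\kappa}\|_{\vec{L}^2(\omega)} \le C\sqrt{\kappa}.
\end{equation*}
Standard monotone operator theory then delivers existence and uniqueness of $(\bm{\zeta}^\varepsilon_\kappa,\vec{\xi}^\varepsilon_\kappa)$ in the closed subspace $\vec{V}_H(\omega)\times H^1_0(\omega) \times \{\vec{\varphi} \in \vec{H}^1_0(\omega): \nabla\vec{\varphi} \textup{ symmetric}\}$.

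For the convergence as $\kappa\to 0^+$, the uniform bounds produce, up to subsequences, weak limits $\bm{\zeta}^\varepsilon \in \bm{H}^1_0(\omega)$ and $\vec{\xi}^\varepsilon \in \vec{H}^1_0(\omega)$. The auxiliary $\sqrt{\kappa}$-bounds and the continuity of $\bm{\beta}^\varepsilon$ (Theorem~9.13-2 of~\cite{PGCLNFAA}) force, in the limit, $\vec{\xi}^\varepsilon=\nabla \zeta_3^\varepsilon$ a.e. in $\omega$ and $(\bm{\theta}^\varepsilon+\bm{\zeta}^\varepsilon)\cdot\bm{q} \ge 0$ a.e. in $\omega$, so that the weak symmetry of $\nabla \vec{\xi}^\varepsilon$ (preserved under weak convergence) gives $(\bm{\zeta}^\varepsilon,\vec{\xi}^\varepsilon) \in \mathbb{U}_K(\omega)$. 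In particular $\zeta_3^\varepsilon \in H^2_0(\omega)$. I would then test the equations in Problem~\ref{problem8} against $(\bm{\eta}-\bm{\zeta}^\varepsilon_\kappa, \vec{\varphi}-\vec{\xi}^\varepsilon_\kappa)$ for arbitrary $(\bm{\eta},\vec{\varphi}) \in \mathbb{U}_K(\omega)$; the two penalty contributions become non-positive by monotonicity (as in~\eqref{eq:1}), and the harmonic corrector contribution tends to zero since $\kappa|\zeta^\varepsilon_{3,\kappa}|_{H^1_0(\omega)}$ does. Passing to the $\limsup$ yields the variational inequality of Problem~\ref{problem7}, which coincides with Problem~\ref{problem5} since $\nabla \zeta^\varepsilon_3 = \vec{\xi}^\varepsilon$ and $p^{3,\varepsilon}=\textup{div }\vec{P}^\varepsilon$. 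By uniqueness in Problem~\ref{problem5}, the full sequences converge, completing the proof.

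The principal obstacle is the coerciveness step: the bilinear form mixes different scales in $\kappa$ and $\varepsilon$, and without the simple-connectedness assumption underlying Theorem~\ref{KornMixed} the $H^1_0$-control on the membrane-displacement component $\vec{\zeta}^\varepsilon_{H,\kappa}$ cannot be recovered from the symmetrised expressions $e_{\alpha\beta}^{\theta,\varepsilon}$ and the gradient of the dual variable alone. A secondary subtlety is that the right-hand-side term $\int_\omega p^{3,\varepsilon} \eta_3 \dd y$ has to be routed through $\vec{P}^\varepsilon$ acting on $\vec{\varphi}$ in the mixed formulation, which is exactly why the regularity assumption $p^{3,\varepsilon}=\textup{div }\vec{P}^\varepsilon$ was imposed.
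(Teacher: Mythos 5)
Your proposal follows essentially the same route as the paper: existence/uniqueness via Minty--Browder on the symmetry-constrained subspace, with coerciveness coming from the mixed-coordinate Korn inequality (Theorem~\ref{KornMixed}) together with the harmonic corrector and the Poincar\'e--Friedrichs inequality; energy estimates giving uniform bounds plus $\mathcal{O}(\sqrt{\kappa})$ controls on $\bm{\beta}^\varepsilon(\bm{\zeta}^\varepsilon_\kappa)$ and $\vec{\xi}^\varepsilon_\kappa-\nabla\zeta^\varepsilon_{3,\kappa}$; extraction of weak limits; verification of $\mathbb{U}_K(\omega)$-membership; and passage to the $\limsup$ after testing at differences against arbitrary $(\bm{\eta},\vec{\varphi})\in\mathbb{U}_K(\omega)$, with the two penalty contributions falling away by monotonicity. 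You correctly flag the regularity assumption $p^{3,\varepsilon}=\operatorname{div}\vec{P}^\varepsilon$ as the mechanism that makes the right-hand side control $\vec{\xi}^\varepsilon_\kappa$ rather than $\nabla\zeta^\varepsilon_{3,\kappa}$, and you correctly identify simple connectedness (via Theorem~\ref{KornMixed}) as the indispensable structural hypothesis. The one step you leave implicit — that $\zeta^\varepsilon_{3,\kappa}$ itself is bounded in $H^1_0(\omega)$, which requires the triangle inequality combining the $\vec{H}^1_0$-bound on $\vec{\xi}^\varepsilon_\kappa$ with the $\sqrt{\kappa}$-bound on $\vec{\xi}^\varepsilon_\kappa-\nabla\zeta^\varepsilon_{3,\kappa}$ (the paper's estimate~\eqref{bdd4}) — is exactly what the paper records, and once inserted your sketch is complete; no genuine gap.
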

\begin{proof}
	In light of the Korn inequality in \emph{mixed coordinates} (Theorem~\ref{KornMixed}), the \emph{shallowness criterion}~\eqref{critShall}, and the de-scalings~\eqref{descalings-1}, we have that there exists a constant $C_K>0$ independent of $\varepsilon$ such that:
	\begin{equation}
		\label{KornMixed2}
	\begin{aligned}
		&\dfrac{\varepsilon^3}{3} \sum_{\alpha,\beta}\|\partial_{\beta}\xi^\varepsilon_{\alpha,\kappa}\|_{L^2(\omega)}^2
		+\varepsilon\sum_{\alpha,\beta}\|e_{\alpha\beta}^{\theta,\varepsilon}(\vec{\zeta}^\varepsilon_{H,\kappa},\vec{\xi}^\varepsilon_\kappa)\|_{L^2(\omega)}^2\\
		&=\dfrac{\varepsilon^5}{3} \sum_{\alpha,\beta}\|\partial_{\beta}\xi_{\alpha,\kappa}\|_{L^2(\omega)}^2 +\varepsilon^5 \sum_{\alpha,\beta}\|e^\theta_{\alpha,\beta}(\vec{\zeta}_\kappa,\vec{\xi}_\kappa)\|_{L^2(\omega)}^2\\
		&\ge \dfrac{\varepsilon^5}{C_K^2}\left\{\|\vec{\zeta}_{H,\kappa}\|_{\vec{H}^1_0(\omega)}^2+\|\vec{\xi}_\kappa\|_{\vec{H}^1_0(\omega)}^2\right\}\\
		&=\dfrac{\varepsilon}{C_K^2}\|\vec{\zeta}^\varepsilon_{H,\kappa}\|_{\vec{H}^1_0(\omega)}^2+\dfrac{\varepsilon^3}{C_K^2}\|\vec{\xi}^\varepsilon_\kappa\|_{\vec{H}^1_0(\omega)}^2\ge \dfrac{\varepsilon^3}{C_K^2}\left(\|\vec{\zeta}^\varepsilon_{H,\kappa}\|_{\vec{H}^1_0(\omega)}^2+\|\vec{\xi}^\varepsilon_\kappa\|_{\vec{H}^1_0(\omega)}^2\right).
	\end{aligned}
	\end{equation}
	
	An application of the Pincar\'e-Friedrichs inequality, \eqref{KornMixed2}, the monotonicity of the operator $\bm{\beta}^\varepsilon$ (cf., e.g., \cite{EvansGariepy2015}), and an application of the fact that $\mu>0$ give that the non-linear operator associated with the left-hand side of the variational equations in Problem~\ref{problem8} is strongly monotone with respect to the natural norm of the space $\bm{H}^1_0(\omega) \times \vec{H}^1_0(\omega)$. Combining this with the linearity of the form of the right-hand side of the variational equations in Problem~\ref{problem8} puts us in a position to apply the Minty-Browder Theorem (cf., e.g., Theorem~9.14-1 of~\cite{PGCLNFAA}) so as to infer that Problem~\ref{problem8} admits a unique solution.
	
	Specialise $(\bm{\eta},\vec{\varphi})=(\bm{\zeta}^\varepsilon_\kappa,\vec{\xi}^\varepsilon_\kappa)$ in the variational equations of Problem~\ref{problem8}. We obtain that an application of~\eqref{KornMixed2} and Young's inequality~\cite{Young1912} gives:
	\begin{equation*}
	\begin{aligned}
	&\dfrac{4\mu\varepsilon^3}{C_K^2}\left\{\|\vec{\zeta}^\varepsilon_{H,\kappa}\|_{\vec{H}^1_0(\omega)}^2+\|\vec{\xi}^\varepsilon_\kappa\|_{\vec{H}^1_0(\omega)}^2\right\}\\
	&\le\dfrac{4}{3} c_0(\lambda,\mu)\varepsilon^3\|\textup{div }\vec{\xi}^\varepsilon_\kappa\|_{L^2(\omega)}^2 +\dfrac{\varepsilon^3}{3} 4\mu \sum_{\alpha,\beta}\|\partial_{\alpha}\xi^\varepsilon_{\beta,\kappa}\|_{L^2(\omega)}^2
	+4c_0(\lambda,\mu)\varepsilon\|e_{\sigma\sigma}^{\theta,\varepsilon}(\vec{\zeta}^\varepsilon_{H,\kappa},\vec{\xi}^\varepsilon_\kappa)\|_{L^2(\omega)}^2\\
	&\qquad+4\mu\varepsilon\sum_{\alpha,\beta}\|e_{\alpha\beta}^{\theta,\varepsilon}(\vec{\zeta}^\varepsilon_{H,\kappa},\vec{\xi}^\varepsilon_\kappa)\|_{L^2(\omega)}^2
	+\dfrac{\varepsilon^3}{\kappa}\|\bm{\beta}^\varepsilon(\bm{\zeta}^\varepsilon_\kappa)\|_{\bm{L}^2(\omega)}^2+\dfrac{\varepsilon^3}{\kappa}\int_{\omega} \left\{\left[\bm{\theta}^\varepsilon+\bm{\zeta}^\varepsilon_\kappa\right]\cdot\bm{q}\right\}^{-} (\bm{\theta}^\varepsilon \cdot\bm{q}) \dd y\\
	&\qquad+\varepsilon^3\kappa\|\nabla\zeta^\varepsilon_{3,\kappa}\|_{\vec{L}^2(\omega)}^2 +\dfrac{\varepsilon^3}{\kappa} \|\vec{\xi}^\varepsilon_\kappa-\nabla\zeta^\varepsilon_{3,\kappa}\|_{\vec{L}^2(\omega)}^2\\
	&=\int_{\omega} p^{\alpha,\varepsilon} \zeta^\varepsilon_{\alpha,\kappa} \dd y -\int_{\omega} \vec{P}^\varepsilon \cdot \vec{\xi}^\varepsilon_\kappa \dd y-\int_{\omega}s^\varepsilon_\alpha \xi^\varepsilon_{\alpha,\kappa}\dd y\\
	&\le \dfrac{C_K^2}{4\mu \varepsilon^3}\left\{\sum_{\alpha} \left(\|p^{\alpha,\varepsilon}\|_{L^2(\omega)}+\|s^\varepsilon_\alpha\|_{L^2(\omega)}\right)+\|\vec{P}^\varepsilon\|_{\vec{L}^2(\omega)}\right\}^2 +\dfrac{2\mu\varepsilon^3}{C_K^2}\left\{\|\vec{\zeta}^\varepsilon_{H,\kappa}\|_{\vec{H}^1_0(\omega)}^2+\|\vec{\xi}^\varepsilon_\kappa\|_{\vec{H}^1_0(\omega)}^2\right\}.
	\end{aligned}
	\end{equation*}
	
	Therefore, we obtain that
	\begin{equation}
	\label{bdd1}
	\|\vec{\zeta}^\varepsilon_{H,\kappa}\|_{\vec{H}^1_0(\omega)}^2+\|\vec{\xi}^\varepsilon_\kappa\|_{\vec{H}^1_0(\omega)}^2
	\le \dfrac{C_K^4}{8\mu^2 \varepsilon^6}\left\{\sum_{\alpha} \left(\|p^{\alpha,\varepsilon}\|_{L^2(\omega)}+\|s^\varepsilon_\alpha\|_{L^2(\omega)}\right)+\|\vec{P}^\varepsilon\|_{\vec{L}^2(\omega)}\right\}^2,
	\end{equation}
	where the right-hand side is independent of $\kappa$, and is also independent of $\varepsilon$ in light of the scalings for the applied body forces~\eqref{scalingsdata}.
	In light of~\eqref{bdd1}, we obtain that
	\begin{equation}
	\label{bdd2}
	\|\vec{\xi}^\varepsilon_\kappa-\nabla\zeta^\varepsilon_{3,\kappa}\|_{\vec{L}^2(\omega)} \le \dfrac{\sqrt{2\mu}C_K}{2\mu\varepsilon^3} \left\{\sum_{\alpha} \left(\|p^{\alpha,\varepsilon}\|_{L^2(\omega)}+\|s^\varepsilon_\alpha\|_{L^2(\omega)}\right)+\|\vec{P}^\varepsilon\|_{\vec{L}^2(\omega)}\right\}\sqrt{\kappa},
	\end{equation}
	and that:
	\begin{equation}
	\label{bdd3}
	\|\bm{\beta}^\varepsilon(\bm{\zeta}^\varepsilon_\kappa)\|_{\bm{L}^2(\omega)}
	\le \dfrac{\sqrt{2\mu}C_K}{2\mu\varepsilon^3} \left\{\sum_{\alpha} \left(\|p^{\alpha,\varepsilon}\|_{L^2(\omega)}+\|s^\varepsilon_\alpha\|_{L^2(\omega)}\right)+\|\vec{P}^\varepsilon\|_{\vec{L}^2(\omega)}\right\}\sqrt{\kappa}.
	\end{equation}
	
	In view of~\eqref{bdd1}--\eqref{bdd3}, we have that, up to passing to a subsequence,
	\begin{equation}
	\label{conv3}
	\begin{aligned}
	\vec{\zeta}^\varepsilon_{H,\kappa} \rightharpoonup \vec{\zeta}^\varepsilon_H&,\quad\textup{ in }\vec{H}^1_0(\omega) \textup{ as }\kappa\to0^+,\\
	\vec{\xi}^\varepsilon_\kappa \rightharpoonup \vec{\xi}^\varepsilon&,\quad\textup{ in }\vec{H}^1_0(\omega) \textup{ as }\kappa\to0^+,\\
	\|\vec{\xi}^\varepsilon_\kappa-\nabla\zeta^\varepsilon_{3,\kappa}\|_{\vec{L}^2(\omega)} \to 0&,\quad\textup{ as }\kappa\to0^+,\\
	\|\bm{\beta}^\varepsilon(\bm{\zeta}^\varepsilon_\kappa)\|_{\bm{L}^2(\omega)}\to 0&,\quad\textup{ as }\kappa\to0^+.
	\end{aligned}
	\end{equation}
	
	As a result of the linearity and continuity of the divergence operator, we obtain that:
	\begin{equation}
	\label{conv3.5}
	\textup{div }\vec{\xi}^\varepsilon_\kappa \rightharpoonup \textup{div }\vec{\xi}^\varepsilon ,\quad\textup{ in } L^2(\omega) \textup{ as }\kappa\to0^+.
	\end{equation}
	
	Besides, combining~\eqref{bdd1} and \eqref{bdd2} with the triangle inequality gives:
	\begin{equation}
	\label{bdd4}
	\begin{aligned}
	&\|\nabla\zeta^\varepsilon_{3,\kappa}\|_{\vec{L}^2(\omega)} \le \|\vec{\xi}^\varepsilon_\kappa-\nabla\zeta^\varepsilon_{3,\kappa}\|_{\vec{L}^2(\omega)} + \|\vec{\xi}^\varepsilon_\kappa\|_{\vec{L}^2(\omega)}\\
	&\le \left(\dfrac{\sqrt{2\mu}C_K}{2\mu\varepsilon^3}\sqrt{\kappa}+\dfrac{\sqrt{2}C_K^2}{4\mu\varepsilon^3}\right)\left\{\sum_{\alpha} \left(\|p^{\alpha,\varepsilon}\|_{L^2(\omega)}+\|s^\varepsilon_\alpha\|_{L^2(\omega)}\right)+\|\vec{P}^\varepsilon\|_{\vec{L}^2(\omega)}\right\}.
	\end{aligned}
	\end{equation}
	
	Thanks to the de-scalings~\eqref{descalings-1}, the right-hand side of~\eqref{bdd4} is bounded independently of $\kappa$ and $\varepsilon$. Thanks to the classical Poincar\'e-Friedrichs inequality (cf., e.g., Theorem~6.5-2 in~\cite{PGCLNFAA}), we have that, up to passing to a subsequence, the following weak convergence takes place:
	\begin{equation}
	\label{conv4}
	\zeta^\varepsilon_{3,\kappa} \rightharpoonup \zeta^\varepsilon_3,\quad\textup{ in }H^1_0(\omega) \textup{ as }\kappa\to 0^+.
	\end{equation}
	
	Combining the estimate~\eqref{bdd3} with~\eqref{conv3}, \eqref{conv4}, the Rellich-Kondra\v{s}ov Theorem (cf., e.g., Theorem~6.6-3 in~\cite{PGCLNFAA}), and the strong monotonicity and Lipschitz continuity of the operator $\bm{\beta}^\varepsilon$ (cf., e.g., \cite{EvansGariepy2015}), we obtain that $\bm{\beta}^\varepsilon(\bm{\zeta}^\varepsilon)=\bm{0}$ in $\bm{L}^2(\omega)$, so that:
	\begin{equation*}
	(\bm{\theta}^\varepsilon+\bm{\zeta}^\varepsilon) \cdot \bm{q} \ge 0,\quad\textup{ a.e. in }\omega.
	\end{equation*}
	
	As a result of~\eqref{conv4}, we have that:
	\begin{equation}
	\label{conv5}
	\sqrt{\kappa} \|\nabla\zeta^\varepsilon_{3,\kappa}\|_{\vec{L}^2(\omega)} \to 0, \textup{ as } \kappa \to 0^+.
	\end{equation}
	
	Combining the second and third convergences in~\eqref{conv3} with~\eqref{conv4}, and applying the Fatou Lemma gives
	\begin{equation}
	\label{eq9}
	\|\vec{\xi}^\varepsilon-\nabla\zeta^\varepsilon_3\|_{\vec{L}^2(\omega)}
	\le \liminf_{\kappa\to 0^+}	\|\vec{\xi}^\varepsilon_\kappa-\nabla\zeta^\varepsilon_{3,\kappa}\|_{\vec{L}^2(\omega)} =0,
	\end{equation}
	so that $\nabla\zeta^\varepsilon_3 = \vec{\xi}^\varepsilon$ in $\vec{H}^1_0(\omega)$. The fact that $\omega$ is a Lipschitz domain in turn gives that $\zeta^\varepsilon_3 \in H^2_0(\omega)$ so that $(\bm{\zeta}^\varepsilon,\vec{\xi}^\varepsilon) \in \mathbb{U}_K(\omega)$ or, equivalently, that $\bm{\zeta}^\varepsilon \in \bm{U}_K(\omega)$.
	
	The last thing to show is that the weak limit $(\bm{\zeta}^\varepsilon,\vec{\xi}^\varepsilon)$ is in fact the unique solution of Problem~\ref{problem6} or, equivalently, that $\bm{\zeta}^\varepsilon$ is the unique solution of Problem~\ref{problem5}. To this aim, let us test the variational equations in Problem~\ref{problem8} at $(\bm{\eta}-\bm{\zeta}^\varepsilon_\kappa,\vec{\varphi}-\vec{\xi}^\varepsilon_\kappa)$, where $(\bm{\eta},\vec{\varphi})$ is chosen arbitrarily in $\mathbb{U}_K(\omega)$. We obtain that:

	\begin{equation}
	\label{eq:3}
	\begin{aligned}
	&\dfrac{\varepsilon^3}{3} \int_{\omega}\left[4c_0(\lambda,\mu)(\textup{div }\vec{\xi}^\varepsilon_\kappa) \delta_{\alpha\beta}\right] \partial_{\alpha}(\varphi_\beta-\xi^\varepsilon_{\beta,\kappa}) \dd y\\
	&\qquad+\dfrac{\varepsilon^3}{3} \int_{\omega}4\mu (\partial_{\alpha}\xi^\varepsilon_{\beta,\kappa})  \partial_{\alpha}(\varphi_\beta-\xi^\varepsilon_{\beta,\kappa}) \dd y\\
	&\qquad+\int_{\omega} \varepsilon\left\{4c_0(\lambda,\mu) e_{\sigma\sigma}^{\theta,\varepsilon}(\vec{\zeta}^\varepsilon_{H,\kappa},\vec{\xi}^\varepsilon_\kappa)\delta_{\alpha\beta}+4\mu e_{\alpha\beta}^{\theta,\varepsilon}(\vec{\zeta}^\varepsilon_{H,\kappa},\vec{\xi}^\varepsilon_\kappa)\right\} e_{\alpha\beta}^{\theta,\varepsilon}(\vec{\eta}_H-\vec{\zeta}^\varepsilon_{H,\kappa},\vec{\varphi}-\vec{\xi}^\varepsilon_\kappa) \dd y\\
	&\qquad+\varepsilon^3\kappa \int_{\omega}\nabla \zeta^\varepsilon_{3,\kappa} \cdot \nabla (\eta_3-\zeta^\varepsilon_{3,\kappa}) \dd y\\
	&\qquad+\underbrace{\dfrac{\varepsilon^3}{\kappa}\int_{\omega} (\vec{\xi}^\varepsilon_\kappa - \nabla \zeta^\varepsilon_{3,\kappa}) \cdot \left((\vec{\varphi}-\vec{\xi}^\varepsilon_\kappa) - \nabla (\eta_3-\zeta^\varepsilon_{3,\kappa})\right) \dd y}_{\le 0}\\
	&\qquad-\dfrac{\varepsilon^3}{\kappa}\int_{\omega} \left\{(\bm{\theta}^\varepsilon+\bm{\zeta}^\varepsilon_\kappa) \cdot \bm{q}\right\}^{-} \left((\bm{\eta}-\bm{\zeta}^\varepsilon_\kappa)\cdot \bm{q}\right) \dd y\\
	=&\int_{\omega} p^{\alpha,\varepsilon} (\eta_\alpha-\zeta^\varepsilon_{\alpha,\kappa}) \dd y 
	-\int_{\omega} \vec{P}^\varepsilon \cdot (\vec{\varphi}-\vec{\xi}^\varepsilon_\kappa) \dd y
	- \int_{\omega} s_\alpha^\varepsilon (\varphi_\alpha-\xi^\varepsilon_{\alpha,\kappa}) \dd y.
	\end{aligned}
	\end{equation}

Passing to the $\limsup$ as $\kappa\to0^+$, and applying the Fatou Lemma gives:

	\begin{equation}
	\label{eq:3bis}
	\begin{aligned}
	&\limsup_{\kappa\to 0^+}\left(-\dfrac{\varepsilon^3}{3} \int_{\omega}4c_0(\lambda,\mu)(\textup{div }\vec{\xi}^\varepsilon_\kappa)^2 \dd y\right) \le -\dfrac{\varepsilon^3}{3} \int_{\omega}4c_0(\lambda,\mu)(\textup{div }\vec{\xi}^\varepsilon)^2 \dd y,\\
	&\limsup_{\kappa\to 0^+}\left(-\dfrac{\varepsilon^3}{3} \int_{\omega}4\mu \sum_{\alpha,\beta}(\partial_{\alpha}\xi^\varepsilon_{\beta,\kappa})^2 \dd y\right)
	\le -\dfrac{\varepsilon^3}{3} \int_{\omega}4\mu \sum_{\alpha,\beta}(\partial_{\alpha}\xi^\varepsilon_{\beta})^2 \dd y,\\
	&\limsup_{\kappa\to 0^+}\left(-\varepsilon \int_{\omega} 4c_0(\lambda,\mu)|e_{\sigma\sigma}^{\theta,\varepsilon}(\vec{\zeta}^\varepsilon_{H,\kappa},\vec{\xi}^\varepsilon_\kappa)|^2 \dd y\right)\le-4c_0(\lambda,\mu)\varepsilon  \sum_\sigma \|e_{\sigma\sigma}^{\theta,\varepsilon}(\vec{\zeta}^\varepsilon_{H},\vec{\xi}^\varepsilon)\|_{L^2(\omega)}^2.
	\end{aligned}
	\end{equation}

Since $(\bm{\eta},\vec{\varphi})\in\mathbb{U}_K(\omega)$, we have that:

	\begin{equation}
	\label{eq:3ter}
	-\dfrac{\varepsilon^3}{\kappa}\int_{\omega} \left\{(\bm{\theta}^\varepsilon+\bm{\zeta}^\varepsilon_\kappa)\cdot\bm{q}\right\}^{-} (\bm{\eta}-\bm{\zeta}^\varepsilon_\kappa)\cdot\bm{q} \dd y
	\le-\dfrac{\varepsilon^3}{\kappa}\int_{\omega} \left|\left\{(\bm{\theta}^\varepsilon+\bm{\zeta}^\varepsilon_\kappa)\cdot \bm{q}\right\}^{-}\right|^2 \dd y\le 0.
	\end{equation}

Passing to the $\limsup$ as $\kappa\to0^+$ in~\eqref{eq:3}, and exploiting the convergences~\eqref{conv3}--\eqref{eq9} and~\eqref{eq:3bis}, \eqref{eq:3ter}, we obtain that:

	\begin{equation}
	\label{eq:4}
	\begin{aligned}
	&\dfrac{\varepsilon^3}{3} \int_{\omega}\left[4c_0(\lambda,\mu)(\textup{div }\vec{\xi}^\varepsilon) \delta_{\alpha\beta}\right] \partial_{\alpha}(\varphi_\beta-\xi^\varepsilon_{\beta}) \dd y+\dfrac{\varepsilon^3}{3} \int_{\omega}4\mu (\partial_{\alpha}\xi^\varepsilon_{\beta})  \partial_{\alpha}(\varphi_\beta-\xi^\varepsilon_{\beta}) \dd y\\
	&\qquad+\int_{\omega} \varepsilon\left\{4c_0(\lambda,\mu)e_{\sigma\sigma}^{\theta,\varepsilon}(\vec{\zeta}^\varepsilon_H,\vec{\xi}^\varepsilon)\delta_{\alpha\beta}+4\mu e_{\alpha\beta}^{\theta,\varepsilon}(\vec{\zeta}^\varepsilon_H,\vec{\xi}^\varepsilon)\right\} e_{\alpha\beta}^{\theta,\varepsilon}(\vec{\eta}_H-\vec{\zeta}^\varepsilon_{H},\vec{\varphi}-\vec{\xi}^\varepsilon) \dd y\\
	&\ge\int_{\omega} p^{\alpha,\varepsilon} (\eta_\alpha-\zeta^\varepsilon_{\alpha}) \dd y 
	-\int_{\omega} \vec{P}^\varepsilon \cdot (\vec{\varphi}-\vec{\xi}^\varepsilon) \dd y
	-\int_{\omega} s_\alpha^\varepsilon (\varphi_\alpha-\xi^\varepsilon_{\alpha}) \dd y,
	\end{aligned}
	\end{equation}
	for all $(\bm{\eta},\vec{\varphi}) \in \mathbb{U}_K(\omega)$. We have thus shown that $(\bm{\zeta}^\varepsilon_\kappa,\vec{\xi}^\varepsilon_\kappa)$ converges to the unique solution of the mixed variational inequalities  in Problem~\ref{problem7}.
\end{proof}

let us now discuss the higher regularity for the solution of Problem~\ref{problem8}.
\begin{theorem}
	\label{th:shallow-reg}
	For each $\kappa>0$, the solution $(\bm{\zeta}^\varepsilon_\kappa,\vec{\xi}_\kappa^\varepsilon)$ of Problem~\ref{problem8} is of class $(\bm{V}(\omega)\cap (H^2(\omega)\times H^2(\omega) \times H^3(\omega)))\times (\vec{H}^1_0(\omega)\cap\vec{H}^2(\omega))$. Besides, there exists a constant $C>0$ independent of $\kappa$ but, possibly, dependent on $\varepsilon$, such that $\|\zeta^\varepsilon_{3,\kappa}\|_{H^2(\omega)}<C$ for all $\kappa>0$.
\end{theorem}
\begin{proof}
	To begin with, we recall that, according to~\cite{Pie2020-1}, the solution $\bm{\zeta}^\varepsilon$ of Problem~\ref{problem8} is of class $(H^1_0(\omega)\cap H^2(\omega))\times (H^1_0(\omega)\cap H^2(\omega))\times (H^2_0(\omega)\cap H^3(\omega))$. Therefore, we can conclude in the same fashion as Theorem~\ref{th:biharmonic-reg}, and establish that there exists a constant $\hat{C}_\varepsilon>0$ that solely depends on $\varepsilon$, $\omega$, $\|\bm{\zeta}^\varepsilon_\kappa\|_{\bm{L}^2(\omega)}$ and $\|\vec{\xi}^\varepsilon_\kappa\|_{\vec{H}^1_0(\omega)}$ and the forcing terms for which:
	\begin{equation*}
		\|\bm{\zeta}^\varepsilon_\kappa\|_{\bm{H}^2(\omega)\times \bm{H}^2(\omega)\times \bm{H}^3(\omega)}+\|\vec{\xi}^\varepsilon_\kappa\|_{\vec{H}^2(\omega)}\le \dfrac{\hat{C}_\varepsilon}{\sqrt{\kappa}}.
	\end{equation*}
	
	Additionally, observe that if we test Problem~\ref{problem11} at $((0,0,\eta_3),\vec{0})$, where $\eta_3$ varies arbitrarily in $H^1_0(\omega)$, we obtain
	\begin{equation*}
		\int_{\omega}\nabla \zeta^\varepsilon_{3,\kappa} \cdot\nabla \eta_3 \dd y=\int_{\omega}\bm{\beta}^\varepsilon(\bm{\zeta}^\varepsilon_\kappa)\cdot(0,0,\eta_3) \dd y+\int_{\omega}(\textup{div }\vec{\xi}^\varepsilon_\kappa)\eta_3\dd y,
	\end{equation*}
	which in turn implies, by the standard augmentation-of-regularity theory (cf., e.g., \cite{Evans2010}) and Theorem~\ref{exunp8}, that $\{\zeta^\varepsilon_{3,\kappa}\}_{\kappa>0}$ is bounded in $H^2(\omega)$ independently of $\kappa$.
\end{proof}

Thanks to Theorem~\ref{th:shallow-reg}, we obtain that the weak convergences established in Theorem~\ref{exunp8} are actually strong.

\begin{theorem}
	\label{th:shallow-strong}
	The following convergences hold up to passing to subsequences
	\begin{align*}
		\bm{\zeta}^\varepsilon_\kappa \to \bm{\zeta}^\varepsilon&, \textup{ in }\bm{H}^1_0(\omega) \textup{ as }\kappa \to 0,\\
		\vec{\xi}^\varepsilon_\kappa \to \nabla \zeta^\varepsilon_3&, \textup{ in }\vec{H}^1_0(\omega) \textup{ as }\kappa \to 0,
	\end{align*}
	where $\bm{\zeta}^\varepsilon\in\bm{U}_K(\omega)$ is the unique solution of Problem~\ref{problem5}.
\end{theorem}
\begin{proof}
	Testing the variational equations of Problem~\ref{problem11} at $(\bm{\zeta}^\varepsilon_\kappa-\bm{\zeta}^\varepsilon,\vec{\xi}^\varepsilon_\kappa-\nabla\zeta^\varepsilon_3)$ gives:
	\begin{equation}
		\label{eq:mixed}
		\begin{aligned}
			&\dfrac{\varepsilon^3}{3}\int_{\omega}\left\{4c_0(\lambda,\mu)(\textup{div }\vec{\xi}^\varepsilon_\kappa)\delta_{\alpha\beta}+4\mu\partial_\alpha\xi_{\beta,\kappa}\right\}(\partial_{\alpha}\xi^\varepsilon_{\beta,\kappa}-\partial_{\alpha\beta}\zeta^\varepsilon_3)\dd y\\
			&\qquad+\int_{\omega}n_{\alpha\beta}^{\theta,\varepsilon}(\vec{\zeta}^\varepsilon_{H,\kappa},\vec{\xi}^\varepsilon_\kappa) e_{\alpha\beta}^{\theta,\varepsilon}(\vec{\zeta}^\varepsilon_{H,\kappa}-\vec{\zeta}^\varepsilon_H,\vec{\xi}^\varepsilon_\kappa-\nabla\zeta^\varepsilon_3) \dd y\\
			&\qquad-\dfrac{\varepsilon^3}{\kappa}\int_{\omega}\{(\bm{\theta}^\varepsilon+\bm{\zeta}^\varepsilon_\kappa)\cdot\bm{q}\}^{-}\left((\bm{\theta}^\varepsilon+\bm{\zeta}^\varepsilon_\kappa)\cdot\bm{q}-(\bm{\theta}^\varepsilon+\bm{\zeta}^\varepsilon)\cdot\bm{q}\right)\dd y\\
			&\qquad+\dfrac{\varepsilon^3}{\kappa}\int_{\omega}|\vec{\xi}^\varepsilon_\kappa-\nabla\zeta^\varepsilon_{3,\kappa}|^2 \dd y\\
			&=\int_{\omega}p^{\alpha,\varepsilon}(\zeta^\varepsilon_{\alpha,\kappa}-\zeta^\varepsilon_{\alpha})\dd y-\int_{\omega}\vec{P}^\varepsilon\cdot(\vec{\xi}^\varepsilon_\kappa-\nabla\zeta^\varepsilon_3)\dd y-\int_{\omega}s^\varepsilon_\alpha(\xi^\varepsilon_{\alpha,\kappa}-\partial_{\alpha}\zeta^\varepsilon_3)\dd y.
		\end{aligned}
	\end{equation}
	
	Following the calculations in~\eqref{KornMixed2} and applying Theorem~\ref{KornMixed} to~\eqref{eq:mixed} gives:
	\begin{equation}
		\label{eq:mixed-2}
		\begin{aligned}
			&\dfrac{\varepsilon^3}{C_K^2}\left(\|\bm{\zeta}^\varepsilon_{H,\kappa}-\bm{\zeta}^\varepsilon_H\|_{\vec{H}^1_0(\omega)}^2+\|\vec{\xi}^\varepsilon_\kappa-\nabla\zeta^\varepsilon_3\|_{\vec{H}^1_0(\omega)}^2\right)\le \dfrac{4}{3}\mu\varepsilon^3\|\partial_\alpha\xi^\varepsilon_{\beta,\kappa}-\partial_{\alpha\beta}\zeta^\varepsilon_3\|_{L^2(\omega)}^2\\
			&\qquad+4\mu\varepsilon\sum_{\alpha,\beta}\int_{\omega} \left|e_{\alpha\beta}^{\theta,\varepsilon}(\vec{\zeta}^\varepsilon_{H,\kappa}-\vec{\zeta}^\varepsilon_{H},\vec{\xi}^\varepsilon_\kappa-\nabla\zeta^\varepsilon_3)\right|^2 \dd y\\
			&\le \int_{\omega}p^{\alpha,\varepsilon}(\zeta^\varepsilon_{\alpha,\kappa}-\zeta^\varepsilon_{\alpha})\dd y-\int_{\omega}\vec{P}^\varepsilon\cdot(\vec{\xi}^\varepsilon_\kappa-\nabla\zeta^\varepsilon_3)\dd y-\int_{\omega}s^\varepsilon_\alpha(\xi^\varepsilon_{\alpha,\kappa}-\partial_{\alpha}\zeta^\varepsilon_3)\dd y\\
			&\qquad-\dfrac{\varepsilon^3}{3}\int_{\omega}\left\{4c_0(\lambda,\mu)(\partial_{\alpha\beta}\zeta^\varepsilon_3\delta_{\alpha\beta}+4\mu\partial_{\alpha\beta}\zeta^\varepsilon_3\right\}(\partial_{\alpha}\xi_{\beta,\kappa}-\partial_{\alpha\beta}\zeta^\varepsilon_3)\dd y\\
			&\qquad-\int_{\omega}n_{\alpha\beta}^{\theta,\varepsilon}(\vec{\zeta}^\varepsilon_H,\nabla\zeta^\varepsilon_3) e_{\alpha\beta}^{\theta,\varepsilon}(\vec{\zeta}^\varepsilon_{H,\kappa}-\vec{\zeta}^\varepsilon_H,\vec{\xi}^\varepsilon_\kappa-\nabla\zeta^\varepsilon_3) \dd y.
		\end{aligned}
	\end{equation}
	
	Since, thanks to Theorem~\ref{exunp8}, the right-hand side of~\eqref{eq:mixed-2} tends to zero as $\kappa\to 0$, we obtain that the following strong convergences hold:
	\begin{equation*}
		\begin{aligned}
			\vec{\zeta}^\varepsilon_{H,\kappa}\to\vec{\zeta}^\varepsilon_H&,\quad\textup{ in }\vec{H}^1_0(\omega) \textup{ as }\kappa\to 0,\\
			\vec{\xi}^\varepsilon_\kappa\to\nabla\zeta^\varepsilon_3&,\quad\textup{ in }\vec{H}^1_0(\omega) \textup{ as }\kappa\to 0.
		\end{aligned}
	\end{equation*}
	
	Since, thanks to Theorem~\ref{th:shallow-reg}, it results that there exists a constant $C>0$ independent of $\kappa$ such that $\|\zeta^\varepsilon_{3,\kappa}\|_{H^2(\omega)}\le C$ for all $\kappa>0$, up to passing to a subsequence, we obtain that $\zeta^\varepsilon_{3,\kappa}\to\zeta^\varepsilon_3$ in $H^1_0(\omega)$ as $\kappa\to 0$. This completes the proof.
\end{proof}

The problem of approximating the solution of Problem~\ref{problem8} via the Finite Element Method is not as straightforward as the prototypical problem discussed in section~\ref{sec1}. The reason for this is that, in general, the symmetry of the gradient matrix of the \emph{dual variable} cannot be handled directly by means of an \emph{ad hoc} Finite Element. Let us show how the theory of fluid mechanics (cf., e.g., \cite{GR86,Temam2001}) helps us overcome this difficulty.
Define the vector space
\begin{equation*}
\vec{W}:=\{\vec{\varphi} \in \vec{H}^1_0(\omega); \nabla \vec{\varphi} = (\nabla \vec{\varphi})^T \textup{ a.e. in }\omega\},
\end{equation*}
and define the function $\textup{switch}:\vec{H}^1_0(\omega) \to \vec{H}^1_0(\omega)$ by:
\begin{equation*}
\textup{switch }\vec{\varphi}:=(-\varphi_2,\varphi_1),\quad\textup{ for all }\vec{\varphi}=(\varphi_1,\varphi_2) \in \vec{H}^1_0(\omega).
\end{equation*}

The operator $\textup{switch}$ is clearly well-defined, linear, bounded and isometric.
Therefore, the operator $\textup{switch}$ is one-to-one. 
It is also easy to see that the operator $\textup{switch}$ is onto, and we thus observe that the operator $\textup{switch}$ is an isometric isomorphism, whose inverse is once again an isometric isomorphism and is given by:
\begin{equation*}
\textup{switch}^{-1} = -\textup{switch}.
\end{equation*}

Let us observe that, if $\vec{\varphi} \in \vec{W}$, then $\textup{div }\textup{switch }\vec{\varphi}=0$. This leads us to consider the vector space
\begin{equation*}
\vec{\mathcal{W}}:=\{\vec{\psi}\in \vec{H}^1_0(\omega); \textup{div }\vec{\psi}=0 \textup{ a.e. in }\omega\},
\end{equation*}
which is widely used in the treatment of the Stokes equations and of the Navier-Stokes equations (cf., e.g., \cite{GR86} and~\cite{Temam2001}).

For the purpose of approximating the solution of Problem~\ref{problem8}, we need to show that $\vec{W}$ can be identified with $\vec{\mathcal{W}}$. To this aim, it suffices to observe that $\textup{switch}^{-1}:\vec{\mathcal{W}} \to \vec{W}$ is an isometric isomorphism.

Therefore, numerically approximating vector fields with symmetric gradient is equivalent to numerically approximating vector fields with vanishing divergence. The advantage of the latter approach is due to the plethora of available results in the context of the study of the Stokes equations and Navier-Stokes equations. Indeed, according to page~74 and formula~(1.49) in~\cite{Temam2001}, we have that:
\begin{equation*}
\vec{\mathcal{W}} \cong \overrightarrow{\textup{curl}}\,V_3(\omega),
\end{equation*}
where $\overrightarrow{\textup{curl}}\,\phi :=(\partial_2 \phi,-\partial_1\phi)$, for all $\phi$ smooth enough, and we recall that $V_3(\omega)=H^2_0(\omega)$. As a result, we have that for each $\vec{\psi}\in \vec{\mathcal{W}}$, we have that there exists a unique $\phi\in V_3(\omega)$ such that $\vec{\psi}=-\nabla\phi$. Moreover, it is straightforward to observe that the stream function $\phi$ is associated with the transverse component of the displacement entering the formulation of the original problem.

Let $h>0$ denote the parameter associated with the mesh size of an affine regular triangulation $\mathcal{T}_h$ of the Lipschitz domain $\omega$. Let $V_{3,h}$ be the finite-dimensional subspace of $V_3(\omega)$ associated with a triangulation made of Hsieh-Clough-Tocher Finite Elements (cf., e.g., Chapter~6 in~\cite{PGCFEM}). 

For each $h>0$, we can thus define the space
\begin{equation*}
\vec{\mathcal{W}}_h := \overrightarrow{\textup{curl}}\, V_{3,h} \subset \vec{\mathcal{W}},
\end{equation*}
as well as the space:
\begin{equation*}
\vec{W}_h := \textup{switch}^{-1}\,\vec{\mathcal{W}}_h = -\textup{switch }\vec{\mathcal{W}}_h \subset \vec{W}.
\end{equation*}

We have that the following density result holds.

\begin{theorem}
	\label{th:4}
	Assume that $\omega$ is triangulated by means of an affine regular triangulation.
	Assume that the space $V_3(\omega)=H^2_0(\omega)$ is discretised by means of a conforming Finite Element.
	Then, the following density result holds:
	$$
	\overline{\bigcup_{h>0} \vec{W}_h}^{\|\cdot\|_{\vec{H}^1_0(\omega)}} = \vec{W}.
	$$
\end{theorem}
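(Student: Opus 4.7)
The plan is to build the density result by transporting it from the Hsieh-Clough-Tocher discretisation of $V_3(\omega)=H^2_0(\omega)$ through the two isometric isomorphisms that relate $\vec{W}$, $\vec{\mathcal{W}}$, and $\overrightarrow{\textup{curl}}\,V_3(\omega)$. The key observation to keep in mind is that $\vec{W}_h$ was defined precisely as $-\textup{switch}\,\overrightarrow{\textup{curl}}\,V_{3,h}$, so a sequence in $\vec{W}_h$ converging to a prescribed $\vec{\varphi}\in \vec{W}$ will be obtained by first pulling $\vec{\varphi}$ back to a stream function in $H^2_0(\omega)$, approximating it there by HCT elements, and then pushing the approximations back through $\overrightarrow{\textup{curl}}$ and $\textup{switch}^{-1}$.

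\textbf{First step: reduction to density in $\vec{\mathcal{W}}$.} Since $\textup{switch}:\vec{W} \to \vec{\mathcal{W}}$ was proven to be an isometric isomorphism just before the statement of the theorem, and since $\vec{W}_h = -\textup{switch}\,\vec{\mathcal{W}}_h$ by definition, the density of $\bigcup_{h>0} \vec{W}_h$ in $\vec{W}$ with respect to $\|\cdot\|_{\vec{H}^1_0(\omega)}$ is equivalent to the density of $\bigcup_{h>0} \vec{\mathcal{W}}_h$ in $\vec{\mathcal{W}}$ with respect to the same norm. So from now on I would fix $\vec{\psi}\in \vec{\mathcal{W}}$ and produce $\vec{\psi}_h\in \vec{\mathcal{W}}_h$ with $\vec{\psi}_h\to \vec{\psi}$ in $\vec{H}^1_0(\omega)$.

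\textbf{Second step: extraction of a stream function and its approximation.} Thanks to the simple connectedness of $\omega$ and the identification $\vec{\mathcal{W}} \cong \overrightarrow{\textup{curl}}\, V_3(\omega)$ recalled via Temam's formula (1.49), there exists $\phi\in V_3(\omega)=H^2_0(\omega)$ such that $\vec{\psi}=\overrightarrow{\textup{curl}}\,\phi$. Since $V_{3,h}$ is a conforming finite-dimensional subspace of $H^2_0(\omega)$ built from HCT triangles, the classical density of conforming finite element spaces (cf., e.g., Chapter~6 of~\cite{PGCFEM}) provides a sequence $\phi_h \in V_{3,h}$ with $\phi_h \to \phi$ in $H^2_0(\omega)$ as $h\to 0^+$. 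The operator $\overrightarrow{\textup{curl}}:H^2_0(\omega)\to \vec{H}^1_0(\omega)$ is bounded (in fact an isometry onto its image), so that setting $\vec{\psi}_h := \overrightarrow{\textup{curl}}\,\phi_h \in \vec{\mathcal{W}}_h$ yields $\vec{\psi}_h \to \vec{\psi}$ in $\vec{H}^1_0(\omega)$.

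\textbf{Third step: transfer back to $\vec{W}$.} Applying the isometric isomorphism $\textup{switch}^{-1}=-\textup{switch}$ to the sequence $\{\vec{\psi}_h\}_{h>0}$, one obtains elements $\vec{\varphi}_h := \textup{switch}^{-1}\,\vec{\psi}_h \in \vec{W}_h$ converging to $\textup{switch}^{-1}\,\vec{\psi}=\vec{\varphi}$ in $\vec{H}^1_0(\omega)$. The reverse inclusion $\overline{\bigcup_{h>0} \vec{W}_h}^{\|\cdot\|_{\vec{H}^1_0(\omega)}} \subset \vec{W}$ is immediate because $\vec{W}$ is closed in $\vec{H}^1_0(\omega)$ (the symmetry of the gradient is preserved under $H^1$-weak, hence strong, limits) and each $\vec{W}_h$ sits inside $\vec{W}$. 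This would complete the proof. The only delicate point that I would need to double-check carefully is the compatibility of the isomorphism $\vec{\mathcal{W}} \cong \overrightarrow{\textup{curl}}\,V_3(\omega)$ with the $\vec{H}^1_0$-norm; on simply connected Lipschitz domains this is standard, but it is the step one has to justify by citing the relevant Poincar\'e-lemma-type result from~\cite{Temam2001} rather than taking it for granted.
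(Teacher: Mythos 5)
Your proposal is correct and follows essentially the same route as the paper's proof: pull the target back to a stream function $\phi\in H^2_0(\omega)$ through $\textup{switch}$ and $\overrightarrow{\textup{curl}}$, approximate $\phi$ by $\phi_h\in V_{3,h}$ using the conformity of the Hsieh-Clough-Tocher discretisation, and transport the approximants forward through the same continuous (indeed isometric/bounded) operators. The only differences are presentational: you isolate the reduction from $\vec{W}$ to $\vec{\mathcal{W}}$ as an explicit first step and you add the (tacit in the paper, but harmless) observation that each $\vec{W}_h$ sits inside the closed subspace $\vec{W}$.
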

\begin{proof}
	Let $\vec{\psi} \in \vec{W}$. We have that there exists a unique $\phi \in V_3(\omega)$ such that $\vec{\psi}=-\textup{switch }\overrightarrow{\textup{curl}}\, \phi=-\nabla\phi$.
	Given any $\epsilon>0$, there exists a function $\phi_h \in V_{3,h}$, for $h>0$ small enough, such that $\|\phi-\phi_h\|_{H^2_0(\omega)}<\epsilon$. Defining $\vec{\psi}_h:= -\textup{switch }\overrightarrow{\textup{curl}}\, \phi_h$, we have that $\vec{\psi}_h \in \vec{W}_h$ and that
	\begin{equation*}
	\|\vec{\psi}-\vec{\psi}_h\|_{\vec{H}^1_0(\omega)}=\left\|\overrightarrow{\textup{curl}}\,(\phi-\phi_h)\right\|_{\vec{H}^1_0(\omega)} \le C \|\phi-\phi_h\|_{H^2_0(\omega)}<C\epsilon,
	\end{equation*}
	where $C>0$ is the continuity constant associated with the $\overrightarrow{\textup{curl}}$ operator. Thanks to the arbitrariness of $\epsilon>0$, the proof is complete.
\end{proof}

An immediate consequence of Theorem~\ref{th:4} is that the elements in $\vec{W}$ can be approximated by means of an \emph{ad hoc} conforming Galerkin method. Note that the previous proof is not in general true if the discretisation of the space $V_3(\omega)$ is performed via non-conforming Finite Elements. 
For each $h>0$, let us denote by $V_h$ a finite-dimensional space of $H^1_0(\omega)$ associated with an affine regular triangulation $\mathcal{T}_h$ made of Courant triangles.
We are thus in a position to write down the discretisation of Problem~\ref{problem8}.

\begin{customprob}{$\mathcal{Q}^{\varepsilon,h}_\kappa(\omega)$}
	\label{problem9}
	Find $(\bm{\zeta}^{\varepsilon,h}_\kappa,\vec{\xi}^{\varepsilon,h}_\kappa) \in \bm{V}_h \times \vec{W}_h$ satisfying:
	\begin{equation*}
	\begin{aligned}
	&\dfrac{\varepsilon^3}{3} \int_{\omega}\left\{4c_0(\lambda,\mu)(\textup{div }\vec{\xi}^{\varepsilon,h}_\kappa) \delta_{\alpha\beta} +4\mu \partial_{\alpha}\xi^{\varepsilon,h}_{\beta,\kappa} \right\} (\partial_{\alpha}\varphi_\beta) \dd y
	+\int_{\omega} n_{\alpha\beta}^{\theta,\varepsilon}(\vec{\zeta}^{\varepsilon,h}_{H,\kappa},\vec{\xi}^{\varepsilon,h}_\kappa) e_{\alpha\beta}^{\theta,\varepsilon}(\vec{\eta}_H,\vec{\varphi}) \dd y\\
	&\qquad+\dfrac{\varepsilon^3}{\kappa}\int_{\omega} (\vec{\xi}^{\varepsilon,h}_\kappa - \nabla \zeta^{\varepsilon,h}_{3,\kappa}) \cdot (\vec{\varphi} - \nabla \eta_3) \dd y+\dfrac{\varepsilon^3}{\kappa}\int_{\omega} \bm{\beta}^\varepsilon(\bm{\zeta}^{\varepsilon,h}_\kappa) \cdot \bm{\eta} \dd y
	= \int_{\omega} p^{\alpha,\varepsilon} \eta_\alpha \dd y -\int_{\omega} \vec{P}^\varepsilon \cdot \vec{\varphi} \dd y- \int_{\omega} s_\alpha^\varepsilon \varphi_\alpha \dd y,
	\end{aligned}
	\end{equation*}
	for all $(\bm{\eta},\vec{\varphi}) \in \bm{V}_h \times \vec{W}_h$.
	\bqed
\end{customprob}

The existence and uniqueness of solutions for Problem~\ref{problem9} can be established in the same fashion as Theorem~\ref{th:3}, as an application of the Minty-Browder Theorem (cf., e.g., Theorem~9.14-1 in~\cite{PGCLNFAA}).
The next result establishes the convergence of the solution $(\bm{\zeta}^{\varepsilon,h}_\kappa,\vec{\xi}^{\varepsilon,h}_\kappa)$ of Problem~\ref{problem9} to the solution $(\bm{\zeta}^\varepsilon_\kappa,\vec{\xi}^\varepsilon_\kappa)$ of Problem~\ref{problem8}.
If we iterate once more the augmentation-of-regularity argument (cf., e.g., \cite{Evans2010}) exploiting the fact that $\bm{\beta}^\varepsilon(\bm{\zeta}^\varepsilon_\kappa) \in \bm{H}^1_0(\omega)$, we can infer that $\xi^\varepsilon_{\alpha,\kappa} \in H^3(\omega)$ so that it immediately follows that $\phi^\varepsilon_\kappa \in H^4(\omega)$.

\begin{theorem}
	\label{th:5}
	Let $(\bm{\zeta}^\varepsilon_\kappa,\vec{\xi}^\varepsilon_\kappa) \in \bm{H}^1_0(\omega) \times \vec{H}^1_0(\omega)$ be the solution of Problem~\ref{problem8}.
	Let $(\bm{\zeta}^{\varepsilon,h}_\kappa,\vec{\xi}^{\varepsilon,h}_\kappa) \in \bm{V}_h \times \vec{W}_h$ be the solution of Problem~\ref{problem9}.
	Then, there exists a constant $C>0$ depending on $\theta$ and $\omega$ such that:
	\begin{equation*}
		\left\{\left\|\bm{\zeta}^\varepsilon_\kappa -\bm{\zeta}^{\varepsilon,h}_\kappa\right\|_{\bm{H}^1_0(\omega)}^2
		+\|\vec{\xi}^\varepsilon_\kappa -\vec{\xi}^{\varepsilon,h}_\kappa\|_{\vec{H}^1_0(\omega)}^2
		\right\}^{1/2} \to 0,\quad\textup{ as } h \to 0^+.
	\end{equation*}
\end{theorem}
\begin{proof}
In what follows, we denote by
\begin{equation*}
\overrightarrow{\tilde{\Pi}}_h\vec{\xi}^\varepsilon_\kappa := -\textup{switch }\overrightarrow{\textup{curl}}\, \Pi_h \phi^\varepsilon_\kappa = -\nabla\left(\Pi_h \phi^\varepsilon_\kappa\right) \in \vec{W}_h,
\end{equation*}
where $\Pi_h$ is the standard interpolation operator. Denote by $\vec{\Pi}_h$ and $\bm{\Pi}_h$ the \emph{two-dimensional} and \emph{three-dimensional} analogues of $\Pi_h$. Recall that $c_0(\lambda,\mu)=\lambda\mu/(\lambda+2\mu)$.
In light of the variational equations of Problem~\ref{problem8}, Problem~\ref{problem9}, Young's inequality~\cite{Young1912} and H\"older's inequality, we obtain:
\begin{align*}
&\dfrac{4c_0(\lambda,\mu)\varepsilon^3}{3}\|\textup{div }(\vec{\xi}^\varepsilon_\kappa-\vec{\xi}^{\varepsilon,h}_\kappa)\|_{L^2(\omega)}^2
+\dfrac{4\mu \varepsilon^3}{3}\|\nabla\vec{\xi}^\varepsilon_\kappa - \nabla\vec{\xi}^{\varepsilon,h}_\kappa\|_{\mathbb{L}^2(\omega)}^2
+4c_0(\lambda,\mu)\varepsilon\|e^{\theta,\varepsilon}_{\sigma\sigma}(\vec{\zeta}^\varepsilon_{H,\kappa}-\vec{\zeta}^{\varepsilon,h}_{H,\kappa},\vec{\xi}^\varepsilon_\kappa -\vec{\xi}^{\varepsilon,h}_\kappa)\|_{L^2(\omega)}^2\\
&\qquad+4\mu\varepsilon\sum_{\alpha,\beta}\|e_{\alpha\beta}^{\theta,\varepsilon}(\vec{\zeta}^\varepsilon_{H,\kappa}-\vec{\zeta}^{\varepsilon,h}_{H,\kappa},\vec{\xi}^\varepsilon_\kappa -\vec{\xi}^{\varepsilon,h}_\kappa)\|_{L^2(\omega)}^2
+\dfrac{\varepsilon^3}{\kappa}\|(\vec{\xi}^\varepsilon_\kappa-\vec{\xi}^{\varepsilon,h}_\kappa)-\nabla(\zeta^\varepsilon_{3,\kappa}-\zeta^{\varepsilon,h}_{3,\kappa})\|_{\vec{L}^2(\omega)}^2
\\
&\qquad+\dfrac{\varepsilon^3}{\kappa}\|\bm{\beta}^\varepsilon(\bm{\zeta}^\varepsilon_\kappa)-\bm{\beta}^\varepsilon(\bm{\zeta}^{\varepsilon,h}_\kappa)\|_{\bm{L}^2(\omega)}^2
\\
&= \dfrac{4c_0(\lambda,\mu)\varepsilon^3}{3} \int_{\omega} \textup{div }(\vec{\xi}^\varepsilon_\kappa-\vec{\xi}^{\varepsilon,h}_\kappa) \delta_{\alpha\beta} \partial_{\alpha}(\xi^\varepsilon_{\beta,\kappa}-\tilde{\Pi}_h\xi^\varepsilon_{\beta,\kappa}) \dd y+\dfrac{4\mu\varepsilon^3}{3} \int_{\omega}\partial_{\alpha}(\xi^\varepsilon_{\beta,\kappa}-\xi^{\varepsilon,h}_{\beta,\kappa}) \partial_{\alpha}(\xi^\varepsilon_{\beta,\kappa}-\tilde{\Pi}_h \xi^\varepsilon_{\beta,\kappa}) \dd y\\
&\qquad+4c_0(\lambda,\mu)\varepsilon \int_{\omega} e^{\theta,\varepsilon}_{\sigma\sigma}(\vec{\zeta}^\varepsilon_{H,\kappa}-\vec{\zeta}^{\varepsilon,h}_{H,\kappa},\vec{\xi}^\varepsilon_\kappa-\vec{\xi}^{\varepsilon,h}_\kappa) e^{\theta,\varepsilon}_{\tau\tau}\left(\vec{\zeta}^\varepsilon_{H,\kappa}-\vec{\Pi}_h\vec{\zeta}^{\varepsilon,h}_{H,\kappa},\vec{\xi}^\varepsilon_\kappa-\overrightarrow{\tilde{\Pi}}_h\xi^\varepsilon_\kappa\right) \dd y\\
&\qquad+4\mu\varepsilon\int_{\omega} e^{\theta,\varepsilon}_{\alpha\beta}(\vec{\zeta}^\varepsilon_{H,\kappa}-\vec{\zeta}^{\varepsilon,h}_{H,\kappa},\vec{\xi}^\varepsilon_\kappa-\vec{\xi}^{\varepsilon,h}_\kappa) e^{\theta,\varepsilon}_{\alpha\beta}\left(\vec{\zeta}^\varepsilon_{H,\kappa}-\vec{\Pi}_h\vec{\zeta}^{\varepsilon,h}_{H,\kappa},\vec{\xi}^\varepsilon_\kappa-\overrightarrow{\tilde{\Pi}}_h\xi^\varepsilon_\kappa\right) \dd y\\
&\qquad+\dfrac{\varepsilon^3}{\kappa}\int_{\omega}\left[(\vec{\xi}^\varepsilon_\kappa-\vec{\xi}^{\varepsilon,h}_\kappa) -\nabla(\zeta^\varepsilon_{3,\kappa}- \zeta^{\varepsilon,h}_{3,\kappa})\right] \cdot \left[(\vec{\xi}^\varepsilon_\kappa-\overrightarrow{\tilde{\Pi}}_h\vec{\xi}^\varepsilon_\kappa) -\nabla(\zeta^\varepsilon_{3,\kappa}-\Pi_h\zeta^\varepsilon_{3,\kappa})\right] \dd y\\
&\qquad+\dfrac{\varepsilon^3}{\kappa} \int_{\omega} \left[\bm{\beta}^\varepsilon(\bm{\zeta}^\varepsilon_\kappa) - \bm{\beta}^\varepsilon(\bm{\zeta}^{\varepsilon,h}_\kappa)\right] \cdot \left(\bm{\zeta}^\varepsilon_\kappa - \bm{\Pi}_h \bm{\zeta}^\varepsilon_\kappa\right) \dd y\\
&\le\dfrac{2c_0(\lambda,\mu)\varepsilon^3}{3}\|\textup{div }(\vec{\xi}^\varepsilon_\kappa-\vec{\xi}^{\varepsilon,h}_\kappa)\|_{L^2(\omega)}^2
+\dfrac{2c_0(\lambda,\mu)\varepsilon^3}{3}\|\textup{div }(\vec{\xi}^\varepsilon_\kappa-\overrightarrow{\tilde{\Pi}}_h\vec{\xi}^\varepsilon_\kappa)\|_{L^2(\omega)}^2+\dfrac{2\mu\varepsilon^3}{3}\|\nabla\vec{\xi}^\varepsilon_\kappa - \nabla\vec{\xi}^{\varepsilon,h}_\kappa\|_{\mathbb{L}^2(\omega)}^2\\
&\qquad+\dfrac{2\mu\varepsilon^3}{3}\left\|\nabla\vec{\xi}^\varepsilon_\kappa-\nabla\left(\overrightarrow{\tilde{\Pi}}_h \vec{\xi}^\varepsilon_\kappa\right)\right\|_{\mathbb{L}^2(\omega)}^2
+2c_0(\lambda,\mu)\varepsilon\|e^{\theta,\varepsilon}_{\sigma\sigma}(\vec{\zeta}^\varepsilon_{H,\kappa}-\vec{\zeta}^{\varepsilon,h}_{H,\kappa},\vec{\xi}^\varepsilon_\kappa-\vec{\xi}^{\varepsilon,h}_\kappa)\|_{L^2(\omega)}^2\\
&\qquad+2c_0(\lambda,\mu)\varepsilon\|e^{\theta,\varepsilon}_{\tau\tau}(\vec{\zeta}^\varepsilon_{H,\kappa}-\vec{\Pi}_h\vec{\zeta}^{\varepsilon}_{H,\kappa},\vec{\xi}^\varepsilon_\kappa-\overrightarrow{\tilde{\Pi}}_h\vec{\xi}^{\varepsilon,h}_\kappa)\|_{L^2(\omega)}^2+2\mu\varepsilon\sum_{\alpha,\beta}\|e^{\theta,\varepsilon}_{\alpha\beta}(\vec{\zeta}^\varepsilon_{H,\kappa}-\vec{\zeta}^{\varepsilon,h}_{H,\kappa},\vec{\xi}^\varepsilon_\kappa-\vec{\xi}^{\varepsilon,h}_\kappa)\|_{L^2(\omega)}^2\\
&\qquad+2\mu\varepsilon\sum_{\alpha,\beta}\|e^{\theta,\varepsilon}_{\alpha\beta}(\vec{\zeta}^\varepsilon_{H,\kappa}-\vec{\Pi}_h\vec{\zeta}^{\varepsilon}_{H,\kappa},\vec{\xi}^\varepsilon_\kappa-\overrightarrow{\tilde{\Pi}}_h\vec{\xi}^{\varepsilon,h}_\kappa)\|_{L^2(\omega)}^2+\dfrac{\varepsilon^3}{2\kappa}\left\|(\vec{\xi}^\varepsilon_\kappa-\vec{\xi}^{\varepsilon,h}_\kappa) -\nabla(\zeta^\varepsilon_{3,\kappa}-\zeta^{\varepsilon,h}_{3,\kappa})\right\|_{\vec{L}^2(\omega)}^2\\
&\qquad+\dfrac{\varepsilon^3}{2\kappa}\left\|(\vec{\xi}^\varepsilon_\kappa-\overrightarrow{\tilde{\Pi}}_h\vec{\xi}^\varepsilon_\kappa) -\nabla(\zeta^\varepsilon_{3,\kappa}-\Pi_h \zeta^\varepsilon_{3,\kappa})\right\|_{\vec{L}^2(\omega)}^2
+\dfrac{\varepsilon^3}{2\kappa}\|\bm{\beta}^\varepsilon(\bm{\zeta}^\varepsilon_\kappa)-\bm{\beta}^\varepsilon(\bm{\zeta}^{\varepsilon,h}_\kappa)\|_{\bm{L}^2(\omega)}^2
+\dfrac{\varepsilon^3}{2\kappa}\|\bm{\zeta}^\varepsilon_\kappa-\bm{\Pi}_h\bm{\zeta}^\varepsilon_\kappa\|_{\bm{L}^2(\omega)}^2.
\end{align*}

Let us define $c_1(\lambda,\mu):=16c_0(\lambda,\mu)+16\mu$. Thanks to Korn's inequality in \emph{mixed coordinates} (Theorem~\ref{KornMixed}) and the classical Poincar\'e-Friedrichs inequality (cf., e.g., Theorem~6.5-2 in~\cite{PGCLNFAA}), we have that:
\begin{align*}
&\dfrac{2\mu\varepsilon^3}{C_K^2}\left\{\|\vec{\zeta}^\varepsilon_{H,\kappa}-\vec{\zeta}^{\varepsilon,h}_{H,\kappa}\|_{\vec{H}^1_0(\omega)}^2+\|\vec{\xi}^\varepsilon_\kappa-\xi^{\varepsilon,h}_\kappa\|_{\vec{H}^1_0(\omega)}^2\right\}\\
&\le\dfrac{2c_0(\lambda,\mu)\varepsilon^3}{3}\|\textup{div }(\vec{\xi}^\varepsilon_\kappa-\vec{\xi}^{\varepsilon,h}_\kappa)\|_{L^2(\omega)}^2
+\dfrac{2\mu \varepsilon^3}{3}\|\nabla\vec{\xi}^\varepsilon_\kappa - \nabla\vec{\xi}^{\varepsilon,h}_\kappa\|_{\mathbb{L}^2(\omega)}^2
+2c_0(\lambda,\mu)\varepsilon\|e^{\theta,\varepsilon}_{\sigma\sigma}(\vec{\zeta}^\varepsilon_{H,\kappa}-\vec{\zeta}^{\varepsilon,h}_{H,\kappa},\vec{\xi}^\varepsilon_\kappa -\vec{\xi}^{\varepsilon,h}_\kappa)\|_{L^2(\omega)}^2\\
&\qquad+2\mu\varepsilon\sum_{\alpha,\beta}\|e_{\alpha\beta}^{\theta,\varepsilon}(\vec{\zeta}^\varepsilon_{H,\kappa}-\vec{\zeta}^{\varepsilon,h}_{H,\kappa},\vec{\xi}^\varepsilon_\kappa -\vec{\xi}^{\varepsilon,h}_\kappa)\|_{L^2(\omega)}^2
+\dfrac{\varepsilon^3}{2\kappa}\|(\vec{\xi}^\varepsilon_\kappa-\vec{\xi}^{\varepsilon,h}_\kappa)-\nabla(\zeta^\varepsilon_{3,\kappa}-\zeta^{\varepsilon,h}_{3,\kappa})\|_{\vec{L}^2(\omega)}^2
\\
&\qquad+\dfrac{\varepsilon^3}{2\kappa}\|\bm{\beta}^\varepsilon(\bm{\zeta}^\varepsilon_\kappa)-\bm{\beta}^\varepsilon(\bm{\zeta}^{\varepsilon,h}_\kappa)\|_{\bm{L}^2(\omega)}^2\\
&= \dfrac{2c_0(\lambda,\mu)\varepsilon^3}{3}\|\textup{div }(\vec{\xi}^\varepsilon_\kappa-\overrightarrow{\tilde{\Pi}}_h\vec{\xi}^\varepsilon_\kappa)\|_{L^2(\omega)}^2
+\dfrac{2\mu\varepsilon^3}{3}\left\|\nabla\vec{\xi}^\varepsilon_\kappa-\nabla\left(\overrightarrow{\tilde{\Pi}}_h \vec{\xi}^\varepsilon_\kappa\right)\right\|_{\mathbb{L}^2(\omega)}^2\\
&\qquad+2c_0(\lambda,\mu)\varepsilon\|e^{\theta,\varepsilon}_{\tau\tau}(\vec{\zeta}^\varepsilon_{H,\kappa}-\vec{\Pi}_h\vec{\zeta}^{\varepsilon}_{H,\kappa},\vec{\xi}^\varepsilon_\kappa-\overrightarrow{\tilde{\Pi}}_h\vec{\xi}^{\varepsilon,h}_\kappa)\|_{L^2(\omega)}^2\\
&\qquad+2\mu\varepsilon\sum_{\alpha,\beta}\|e^{\theta,\varepsilon}_{\alpha\beta}(\vec{\zeta}^\varepsilon_{H,\kappa}-\vec{\Pi}_h\vec{\zeta}^{\varepsilon}_{H,\kappa},\vec{\xi}^\varepsilon_\kappa-\overrightarrow{\tilde{\Pi}}_h\vec{\xi}^{\varepsilon,h}_\kappa)\|_{L^2(\omega)}^2
+\dfrac{\varepsilon^3}{2\kappa}\left\|(\vec{\xi}^\varepsilon_\kappa-\overrightarrow{\tilde{\Pi}}_h\vec{\xi}^\varepsilon_\kappa) -\nabla(\zeta^\varepsilon_{3,\kappa}-\Pi_h \zeta^\varepsilon_{3,\kappa})\right\|_{\vec{L}^2(\omega)}^2\\
&\qquad+\dfrac{\varepsilon^3}{2\kappa}\|\bm{\zeta}^\varepsilon_\kappa-\bm{\Pi}_h\bm{\zeta}^\varepsilon_\kappa\|_{\bm{L}^2(\omega)}^2\\
&\le2\mu\varepsilon^3\left(\dfrac{\lambda}{\lambda+2\mu}+\dfrac{1}{3}\right)\|\vec{\xi}^\varepsilon_\kappa-\overrightarrow{\tilde{\Pi}}_h\vec{\xi}^\varepsilon_\kappa\|_{\vec{H}^1_0(\omega)}^2
+c_1(\lambda,\mu)C\varepsilon\left(\|\vec{\zeta}^\varepsilon_{H,\kappa}-\vec{\Pi}_h\vec{\zeta}^\varepsilon_{H,\kappa}\|_{\vec{H}^1_0(\omega)}^2+\|\vec{\xi}^\varepsilon_\kappa-\overrightarrow{\tilde{\Pi}}_h\vec{\xi}^\varepsilon_\kappa\|_{\vec{H}^1_0(\omega)}^2\right)\\
&\qquad+\dfrac{\varepsilon^3}{\kappa}\|\zeta^\varepsilon_{3,\kappa}-\Pi_h\zeta^\varepsilon_{3,\kappa}\|_{H^1_0(\omega)}^2+\dfrac{\varepsilon^3}{\kappa}\|\vec{\xi}^\varepsilon_\kappa-\overrightarrow{\tilde{\Pi}}_h\vec{\xi}^\varepsilon_\kappa\|_{\vec{H}^1_0(\omega)}^2
+\dfrac{\varepsilon^3}{2\kappa}\|\bm{\zeta}^\varepsilon_\kappa-\bm{\Pi}_h\bm{\zeta}^\varepsilon_\kappa\|_{\bm{L}^2(\omega)}^2\\
&\le\left[c_1(\lambda,\mu)C+3\right]\dfrac{\varepsilon}{\kappa}\|\bm{\zeta}^\varepsilon_\kappa-\bm{\Pi}_h\bm{\zeta}^\varepsilon_\kappa\|_{\bm{H}^1_0(\omega)}^2
+\left[2c_0(\lambda,\mu)+c_1(\lambda,\mu)C+1+\dfrac{2\mu}{3}\right]\dfrac{\varepsilon}{\kappa}\|\vec{\xi}^\varepsilon_\kappa-\overrightarrow{\tilde{\Pi}}_h\vec{\xi}^\varepsilon_\kappa\|_{\vec{H}^1_0(\omega)}^2\\
&\le\left[2c_0(\lambda,\mu)+c_1(\lambda,\mu)C+2+\dfrac{2\mu}{3}\right]\dfrac{\varepsilon}{\kappa}\left(\|\bm{\zeta}^\varepsilon_\kappa-\bm{\Pi}_h\bm{\zeta}^\varepsilon_\kappa\|_{\bm{H}^1_0(\omega)}^2+\|\vec{\xi}^\varepsilon_\kappa-\overrightarrow{\tilde{\Pi}}_h\vec{\xi}^\varepsilon_\kappa\|_{\vec{H}^1_0(\omega)}^2\right)\\
&\le\left[2c_0(\lambda,\mu)+c_1(\lambda,\mu)C+3+\dfrac{2\mu}{3}\right]\dfrac{\varepsilon}{\kappa}\left(C h^2 |\bm{\zeta}^\varepsilon_\kappa|_{\bm{H}^2(\omega)}^2+\left\|\overrightarrow{\textup{curl}}(\phi^\varepsilon_\kappa-\Pi_h\phi^\varepsilon_\kappa)\right\|_{\vec{H}^1_0(\omega)}^2\right)\\
&\le\left[2c_0(\lambda,\mu)+c_1(\lambda,\mu)C+2+\dfrac{2\mu}{3}\right]\dfrac{\varepsilon}{\kappa}\left(C h^2 |\bm{\zeta}^\varepsilon_\kappa|_{\bm{H}^2(\omega)}^2+\left\|\phi^\varepsilon_\kappa-\Pi_h\phi^\varepsilon_\kappa\right\|_{H^2(\omega)}^2\right)\\
&\le\left[2c_0(\lambda,\mu)+c_1(\lambda,\mu)C+2+\dfrac{2\mu}{3}\right]\dfrac{\varepsilon}{\kappa}\left(C h^2 |\bm{\zeta}^\varepsilon_\kappa|_{\bm{H}^2(\omega)}^2+C h^4|\phi^\varepsilon_\kappa|_{H^4(\omega)}^2\right)\\
&=\left[2c_0(\lambda,\mu)+c_1(\lambda,\mu)C+2+\dfrac{2\mu}{3}\right]\dfrac{\varepsilon}{\kappa}\left(C h^2 |\bm{\zeta}^\varepsilon_\kappa|_{\bm{H}^2(\omega)}^2+C h^4|\vec{\xi}^\varepsilon_\kappa|_{\vec{H}^3(\omega)}^2\right),
\end{align*}
where $C>0$ is a positive constant which only depends on $\theta$ and $\omega$, the fourth-last inequality descends from an application of Theorem~3.1-6 in~\cite{PGCFEM} and the fact that the operator $\textup{switch}$ is an isometric isomorphism, the third-last inequality is due to the continuity of the $\overrightarrow{\textup{curl}}$ operator, the second-last inequality follows from the remarks preceding Theorem~\ref{th:5} and an application of Theorem~6.1.3 in~\cite{PGCFEM} with $m=2$. 
The last equality descends from the fact that $|\phi^\varepsilon_\kappa|_{H^4(\omega)}=\left|\overrightarrow{\textup{curl}}\,\phi^\varepsilon_\kappa\right|_{\vec{H}^3(\omega)}$ and the fact that the $\textup{switch}$ operator is an isometric isomorphism.
%
\end{proof}

We note in passing that the lack of a rigidity theorem for the two-dimensional linearly elastic shallow shells model prevents us from implementing a Finite Element method based on Courant triangles to discretise the mixed formulation of Problem~\ref{problem6}.

One of the main difficulties we encountered for showing that the solution of Problem~\ref{problem9} converges to the solution of Problem~\ref{problem8} is that it is not possible to define the interpolation operator for vector fields in $\vec{H}^1_0(\omega)$ with symmetric gradient matrix. However, thanks to the isomorphisms associated with the $\textup{switch}$ and $\overrightarrow{\textup{curl}}$ operators and the theory presented in~\cite{GR86,Temam2001}, we can consider the interpolation operator for the space $H^2_0(\omega)$, which can be approximated by means of the Finite Element space $V_{3,h}$ associated with Hsieh-Clough-Tocher triangles. We can then transform the interpolation function defined over the space $V_{3,h}$ into a function of $\vec{W}_h$.
This is, in fact, the function that we exploit to derive the estimates that put in a position to apply Cea's Lemma (cf., e.g., Theorem~2.4.1 in~\cite{PGCFEM}).

Another issue that appears in the derivation of the estimates in Theorem~\ref{th:5} is the regularity requirement for the stream function $\phi^\varepsilon_\kappa$, which has to be \emph{at least} of class $H^4(\omega)$.
Note that this kind of augmentation of regularity is totally licit for the penalised problem, even though it is not licit, in general, for the transverse component of the solution of the variational inequalities in Problem~\ref{problem5} to enjoy a regularity higher than $H^3(\omega)$ in light of the results in~\cite{Caffarelli1979,CafFriTor1982}.

We observe that, with the current state of the art for a number of Finite Element Analysis libraries, the model governed by Problem~\ref{problem5} cannot be implemented numerically as the typical installations of the aforementioned libraries do not come with conforming Finite Elements for fourth order problems.

In the next section we show that, for a flat surface, the theory in~\cite{GR86,Temam2001} is not necessary in order to devise the error estimates for the approximation of the solution of the mixed penalised problem. The reason for this is that the standard Korn's inequality (cf., e.g, Theorem~6.15-4) will in fact suffice to establish the uniform ellipticity ensuring the existence and uniqueness of solutions for the analogue of Problem~\ref{problem8} for the surface under consideration.

\section{Numerical approximation of an obstacle problem for linearly elastic shallow shells. The case of a flat surface}
\label{sec3}

Let us now assume that the function $\theta$ parametrising the middle surface of the hierarchy of linearly elastic shallow shells considered in section~\ref{sec2} is constant in $\overline{\omega}$, i.e., there exists a constant $L \in \mathbb{R}$ such that:
\begin{equation*}
\theta(y)=L,\quad\textup{ for all }y\in\overline{\omega}.
\end{equation*}

It is straightforward to see that the components $e^{\theta,\varepsilon}_{\alpha\beta}$ of the strain tensor in \emph{mixed coordinates} reduce to the standard symmetrised gradient with respect to the \emph{primal variable}, namely,
$$
e^{\theta,\varepsilon}_{\alpha\beta}(\vec{\eta}_H,\vec{\varphi})=e^\varepsilon_{\alpha\beta}(\vec{\eta}_H):=\dfrac{1}{2}(\partial_\alpha\eta_\beta+\partial_{\beta}\eta_\alpha),\quad\textup{ for all } (\vec{\eta}_H,\vec{\varphi}) \in \vec{H}^1_0(\omega) \times \vec{H}^1_0(\omega).
$$

As a result, the inequality of Korn's type in \emph{mixed coordinates} established in Theorem~\ref{KornMixed} reduces to the standard Korn's inequality in $\mathbb{R}^2$ (cf., e.g., Theorem~6.15-4 of~\cite{PGCLNFAA}) and it is thus not necessary to assume the symmetry of the gradient matrix for the \emph{dual variable} in order to establish one such inequality.

Observe that the inequality of Korn's type in \emph{mixed coordinates} established in Theorem~\ref{KornMixed} required the assumption that the gradient matrix of the \emph{dual variable} was symmetric to compensate the \emph{lack of rigidity} for surfaces which are compatible with the definition of shallow shells. Indeed, differently from other types of linearly elastic shells (e.g., linearly elastic elliptic membrane shells~\cite{CiaLods1996a,CiaLods1996b} or linearly elastic flexural shells~\cite{CiaLodsMia1996}) for which a rigidity theorem is available for the corresponding two-dimensional limit models recovered as a result of a rigorous asymptotic analysis, the two-dimensional limit model for shallow shells does not enjoy, in general, one such \emph{rigidity} property.

The case where the surface modelling the geometry of the shallow shell under consideration is flat constitutes an example where one such \emph{rigidity} property is \emph{regained}.

The penalised version of Problem~\ref{problem5}, in the case where $\theta \equiv L$, thus takes the following simpler form.

\begin{customprob}{$\tilde{\mathcal{Q}}^\varepsilon(\omega)$}
	\label{problem10}
	Find $(\bm{\zeta}^\varepsilon,\vec{\xi}^\varepsilon) \in \mathbb{U}_K(\omega):=\{(\bm{\eta},\vec{\varphi}) \in \bm{H}^1_0(\omega) \times \vec{H}^1_0(\omega); \vec{\varphi} = \nabla \eta_3 \textup{ a.e. in }\omega \textup{ and } (\bm{\theta}^\varepsilon +\bm{\eta}) \cdot\bm{q} \ge 0 \textup{ a.e. in }\omega \}$ satisfying:
	\begin{equation*}
	\begin{aligned}
	&\dfrac{\varepsilon^3}{3} \int_{\omega}\left\{4c_0(\lambda,\mu)(\textup{div }\vec{\xi}^\varepsilon) \delta_{\alpha\beta} +4\mu \partial_{\alpha}\xi^\varepsilon_\beta \right\}\partial_{\alpha}(\varphi_\beta-\xi^\varepsilon_\beta) \dd y+\int_{\omega} n_{\alpha\beta}^\varepsilon(\vec{\zeta}^\varepsilon_H) e_{\alpha\beta}^\varepsilon(\vec{\eta}_H-\vec{\zeta}^\varepsilon_H) \dd y\\
	&\ge \int_{\omega} p^{\alpha,\varepsilon} (\eta_\alpha-\zeta_\alpha^\varepsilon) \dd y 
	-\int_{\omega} \vec{P}^\varepsilon \cdot (\vec{\varphi}-\vec{\xi}^\varepsilon) \dd y
	- \int_{\omega} s_\alpha^\varepsilon (\varphi_\alpha-\xi^\varepsilon_\alpha) \dd y,
	\end{aligned}
	\end{equation*}
	for all $(\bm{\eta},\vec{\varphi}) \in \mathbb{U}_K(\omega)$, where
	\begin{equation*}
	\begin{cases}
	&\lambda\ge 0, \mu>0\quad\textup{ are the Lam\'e constants},\\
	&c_0(\lambda,\mu)=\dfrac{\lambda\mu}{\lambda+2\mu},\\
	&e_{\alpha\beta}^\varepsilon(\vec{\zeta}^\varepsilon_H):=\dfrac{1}{2}(\partial_\alpha \zeta_\beta^\varepsilon+\partial_\beta\zeta_\alpha^\varepsilon),\\
	&n_{\alpha\beta}^\varepsilon(\vec{\zeta}^\varepsilon_H):=\varepsilon\left\{4c_0(\lambda,\mu)e_{\sigma\sigma}^\varepsilon(\vec{\zeta}^\varepsilon_H)\delta_{\alpha\beta}+4\mu e_{\alpha\beta}^\varepsilon(\vec{\zeta}^\varepsilon_H)\right\},\\
	&p^{i,\varepsilon}:=\int_{-\varepsilon}^{\varepsilon}f_i^\varepsilon \dd x_3^\varepsilon,\\
	&p^{3,\varepsilon} = \textup{div }\vec{P}^\varepsilon,\\
	&s_\alpha^\varepsilon:=\int_{-\varepsilon}^{\varepsilon} x_3^\varepsilon f_\alpha^\varepsilon \dd x_3^\varepsilon.
	\end{cases}
	\end{equation*}
	\bqed
\end{customprob}

In the same fashion as section~\ref{sec2}, Problem~\ref{problem10}, which is simply a rewriting of Problem~\ref{problem5} in the case where $\theta \equiv L$ in $\overline{\omega}$, admits a unique solution.
In the same fashion as section~\ref{sec2}, we can state the penalised version of Problem~\ref{problem10}.

\begin{customprob}{$\tilde{\mathcal{Q}}^\varepsilon_\kappa(\omega)$}
	\label{problem11}
	Find $(\bm{\zeta}^\varepsilon_\kappa,\vec{\xi}^\varepsilon_\kappa) \in \bm{H}^1_0(\omega) \times \vec{H}^1_0(\omega)$ satisfying:
	\begin{equation*}
	\begin{aligned}
	&\dfrac{\varepsilon^3}{3} \int_{\omega}\left\{4c_0(\lambda,\mu)(\textup{div }\vec{\xi}^\varepsilon_\kappa) \delta_{\alpha\beta} +4\mu \partial_{\alpha}\xi^\varepsilon_{\beta,\kappa} \right\} (\partial_{\alpha}\varphi_\beta) \dd y\\
	&\qquad +\int_{\omega} n_{\alpha\beta}^\varepsilon(\vec{\zeta}^\varepsilon_{H,\kappa}) e_{\alpha\beta}^\varepsilon(\vec{\eta}_H) \dd y
	+\dfrac{\varepsilon^3}{\kappa}\int_{\omega} (\vec{\xi}^\varepsilon_\kappa - \nabla \zeta^\varepsilon_{3,\kappa}) \cdot (\vec{\varphi} - \nabla \eta_3) \dd y+\dfrac{\varepsilon^3}{\kappa}\int_{\omega} \bm{\beta}^\varepsilon(\bm{\zeta}^\varepsilon_\kappa) \cdot \bm{\eta} \dd y\\
	&= \int_{\omega} p^{\alpha,\varepsilon} \eta_\alpha \dd y 
	-\int_{\omega} \vec{P}^\varepsilon \cdot \vec{\varphi} \dd y
	- \int_{\omega} s_\alpha^\varepsilon \varphi_\alpha \dd y,
	\end{aligned}
	\end{equation*}
	for all $(\bm{\eta},\vec{\varphi}) \in \bm{H}^1_0(\omega) \times \vec{H}^1_0(\omega)$.
	\bqed
\end{customprob}

Moreover, in the same fashion as section~\ref{sec2}, the following result can be established.

\begin{theorem}
	\label{exunp11}
	For each $\varepsilon>0$ and for each $\kappa>0$, Problem~\ref{problem11} admits a unique solution $(\bm{\zeta}^\varepsilon_\kappa,\vec{\xi}^\varepsilon_\kappa)$. Besides, we have that the following convergences hold
	\begin{align*}
	\bm{\zeta}^\varepsilon_\kappa \rightharpoonup \bm{\zeta}^\varepsilon&, \textup{ in }\bm{H}^1_0(\omega) \textup{ as }\kappa \to 0,\\
	\vec{\xi}_\kappa^\varepsilon \rightharpoonup \nabla \zeta^\varepsilon_3&, \textup{ in }\vec{H}^1_0(\omega) \textup{ as }\kappa \to 0,
	\end{align*}
	where $\bm{\zeta}^\varepsilon \in \bm{V}(\omega)$ is the unique solution of Problem~\ref{problem5}.
\end{theorem}
\begin{proof}
	Clearly, Problem~\ref{problem11} admits a unique solution thanks to the classical Korn's inequality (cf., e.g., Theorem~6.15-4 in~\cite{PGCLNFAA}), the classical Poincar\'e-Friedrichs inequality (cf., e.g., Theorem~6.5-2 in~\cite{PGCLNFAA}), and the strong monotonicity of the non-linear term (cf., e.g., \cite{EvansGariepy2015}), which put us in a position to apply the Minty-Browder Theorem (cf., e.g., Theorem~9.14-1 in~\cite{PGCLNFAA}). Note in passing that the proof for the existence and uniqueness of solutions for Problem~\ref{problem11} does not hinge on the inequality of Korn's type in \emph{mixed coordinates} (Theorem~\ref{KornMixed}).
	The proof for the convergence result follows the same strategy as in Theorem~\ref{exunp8}.
\end{proof}

The higher regularity of the solution of Problem~\ref{problem11} can be discussed by means of Theorem~\ref{th:shallow-reg}.
The main advantage brought forth by the fact that no symmetry for the gradient matrix of the \emph{dual variable} is required to establish the existence and uniqueness of solutions for Problem~\ref{problem11} is that the discretisation of Problem~\ref{problem11} via the Finite Element Method can be carried out by sole means of Courant triangles, i.e., without resorting to the theory in~\cite{GR86,Temam2001}, that was instead essential to establish the convergence of the Finite Element approximation described in section~\ref{sec2}.

Recalling that, for each $h>0$, we denote by $V_h$ a finite-dimensional subspace of $H^1_0(\omega)$, the discretised version of Problem~\ref{problem11} takes the following form.

\begin{customprob}{$\tilde{\mathcal{Q}}^{\varepsilon,h}_\kappa(\omega)$}
	\label{problem12}
	Find $(\bm{\zeta}^{\varepsilon,h}_\kappa,\vec{\xi}^{\varepsilon,h}_\kappa) \in \bm{V}_h \times \vec{V}_h$ satisfying:
	\begin{equation*}
	\begin{aligned}
	&\dfrac{\varepsilon^3}{3} \int_{\omega}\left\{4c_0(\lambda,\mu)(\textup{div }\vec{\xi}^{\varepsilon,h}_\kappa) \delta_{\alpha\beta} +4\mu \partial_{\alpha}\xi^{\varepsilon,h}_{\beta,\kappa} \right\} (\partial_{\alpha}\varphi_\beta) \dd y\\
	&\qquad +\int_{\omega} n_{\alpha\beta}^\varepsilon(\vec{\zeta}^{\varepsilon,h}_{H,\kappa}) e_{\alpha\beta}^\varepsilon(\vec{\eta}_H) \dd y
	+\dfrac{\varepsilon^3}{\kappa}\int_{\omega} (\vec{\xi}^{\varepsilon,h}_\kappa - \nabla \zeta^{\varepsilon,h}_{3,\kappa}) \cdot (\vec{\varphi} - \nabla \eta_3) \dd y+\dfrac{\varepsilon^3}{\kappa}\int_{\omega} \bm{\beta}^\varepsilon(\bm{\zeta}^{\varepsilon,h}_\kappa) \cdot \bm{\eta} \dd y\\
	&= \int_{\omega} p^{\alpha,\varepsilon} \eta_\alpha \dd y 
	-\int_{\omega} \vec{P}^\varepsilon \cdot \vec{\varphi} \dd y
	- \int_{\omega} s_\alpha^\varepsilon \varphi_\alpha \dd y,
	\end{aligned}
	\end{equation*}
	for all $(\bm{\eta},\vec{\varphi}) \in \bm{V}_h \times \vec{V}_h$.
	\bqed
\end{customprob}

The convergence of the solution of Problem~\ref{problem12} to the solution of Problem~\ref{problem11} descends straightforwardly from the same computations as in Theorem~\ref{th:5}.

\begin{theorem}
	\label{th:6}
	Let $(\bm{\zeta}^\varepsilon_\kappa,\vec{\xi}^\varepsilon_\kappa) \in \bm{H}^1_0(\omega) \times \vec{H}^1_0(\omega)$ be the solution of Problem~\ref{problem11}.
	Let $(\bm{\zeta}^{\varepsilon,h}_\kappa,\vec{\xi}^{\varepsilon,h}_\kappa) \in \bm{V}_h \times \vec{V}_h$ be the solution of Problem~\ref{problem12}.
	Then, there exists a constant $C>0$ depending on $\theta$ and $\omega$, and a constant $\hat{C}_\varepsilon>0$ such that the following estimates hold:
	\begin{equation*}
		\|\bm{\zeta}^\varepsilon_\kappa-\bm{\zeta}^{\varepsilon,h}_\kappa\|_{\bm{H}^1_0(\omega)}+\|\vec{\xi}^\varepsilon_\kappa-\xi^{\varepsilon,h}_\kappa\|_{\vec{H}^1_0(\omega)}\le 2\sqrt{3C}\left(2c_0(\lambda,\mu)+c_1(\lambda,\mu)C+2+\dfrac{2\mu}{3}\right)^{1/2}\dfrac{\hat{C}_\varepsilon \max\{c_P,C_K\} h}{\min\{1,\sqrt{2\mu}\}\kappa}.
	\end{equation*}
\end{theorem}
\begin{proof}
Since the proof closely follows the strategy of Theorem~\ref{th:5}, we just limit ourselves to sketch it.

Let us recall that $c_1(\lambda,\mu)=16c_0(\lambda,\mu)+16\mu$. Thanks to the classical Korn's inequality (cf., e.g., Theorem~6.15-4 in~\cite{PGCLNFAA}) and the classical Poincar\'e-Friedrichs inequality (cf., e.g., Theorem~6.5-2 in~\cite{PGCLNFAA}), we have that:
\begin{equation*}
\begin{aligned}
&\dfrac{2\mu\varepsilon^3}{3\max\{c_P,C_K\}^2}\left\{\|\vec{\zeta}^\varepsilon_{H,\kappa}-\vec{\zeta}^{\varepsilon,h}_{H,\kappa}\|_{\vec{H}^1_0(\omega)}^2+\|\vec{\xi}^\varepsilon_\kappa-\xi^{\varepsilon,h}_\kappa\|_{\vec{H}^1_0(\omega)}^2\right\}\\
&\le \dfrac{2\mu \varepsilon^3}{3}\|\nabla\vec{\xi}^\varepsilon_\kappa - \nabla\vec{\xi}^{\varepsilon,h}_\kappa\|_{\mathbb{L}^2(\omega)}^2
+2\mu\varepsilon\sum_{\alpha,\beta}\|e_{\alpha\beta}^\varepsilon(\vec{\zeta}^\varepsilon_{H,\kappa}-\vec{\zeta}^{\varepsilon,h}_{H,\kappa})\|_{L^2(\omega)}^2+\dfrac{\varepsilon^3\kappa}{2}\|\nabla\zeta^\varepsilon_{3,\kappa}-\nabla\zeta^{\varepsilon,h}_{3,\kappa}\|_{\vec{L}^2(\omega)}^2\\
&\le \dfrac{2c_0(\lambda,\mu)\varepsilon^3}{3}\|\textup{div }(\vec{\xi}^\varepsilon_\kappa-\vec{\Pi}_h\vec{\xi}^\varepsilon_\kappa)\|_{L^2(\omega)}^2
+\dfrac{2\mu\varepsilon^3}{3}\|\nabla \vec{\xi}^\varepsilon_\kappa -\nabla\vec{\Pi}_h\vec{\xi}^\varepsilon_\kappa\|_{\mathbb{L}^2(\omega)}^2
+2c_0(\lambda,\mu)\varepsilon\|e^\varepsilon_{\tau\tau}(\vec{\zeta}^\varepsilon_{H,\kappa}-\vec{\Pi}_h\vec{\zeta}^{\varepsilon}_{H,\kappa})\|_{L^2(\omega)}^2\\
&\qquad+2\mu\varepsilon\sum_{\alpha,\beta}\|e^\varepsilon_{\alpha\beta}(\vec{\zeta}^\varepsilon_{H,\kappa}-\vec{\Pi}_h\vec{\zeta}^{\varepsilon}_{H,\kappa})\|_{L^2(\omega)}^2+\dfrac{\varepsilon^3\kappa}{2}\left\|\nabla\zeta^\varepsilon_{3,\kappa}-\nabla\left(\Pi_h \zeta^\varepsilon_{3,\kappa}\right)\right\|_{\vec{L}^2(\omega)}^2\\
&\qquad+\dfrac{\varepsilon^3}{2\kappa}\left\|(\vec{\xi}^\varepsilon_\kappa-\vec{\Pi}_h\vec{\xi}^\varepsilon_\kappa) -\nabla(\zeta^\varepsilon_{3,\kappa}-\Pi_h \zeta^\varepsilon_{3,\kappa})\right\|_{\vec{L}^2(\omega)}^2
+\dfrac{\varepsilon^3}{2\kappa}\|\bm{\zeta}^\varepsilon_\kappa-\bm{\Pi}_h\bm{\zeta}^\varepsilon_\kappa\|_{\bm{L}^2(\omega)}^2\\
&\le 2c_0(\lambda,\mu)\varepsilon^3\|\vec{\xi}^\varepsilon_\kappa-\vec{\Pi}_h\vec{\xi}^\varepsilon_\kappa\|_{\vec{H}^1_0(\omega)}^2
+c_1(\lambda,\mu)C\varepsilon\left(\|\vec{\zeta}^\varepsilon_{H,\kappa}-\vec{\Pi}_h\vec{\zeta}^\varepsilon_{H,\kappa}\|_{\vec{H}^1_0(\omega)}^2+\|\vec{\xi}^\varepsilon_\kappa-\vec{\Pi}_h\vec{\xi}^\varepsilon_\kappa\|_{\vec{H}^1_0(\omega)}^2\right)\\
&\qquad+\dfrac{2\mu\varepsilon^3}{3}\|\nabla \vec{\xi}^\varepsilon_\kappa -\nabla\vec{\Pi}_h\vec{\xi}^\varepsilon_\kappa\|_{\mathbb{L}^2(\omega)}^2+\dfrac{\varepsilon^3\kappa}{2}\|\zeta^\varepsilon_{3,\kappa}-\Pi_h\zeta^\varepsilon_{3,\kappa}\|_{H^1_0(\omega)}^2+\dfrac{\varepsilon^3}{\kappa}\|\vec{\xi}^\varepsilon_\kappa-\vec{\Pi}_h\vec{\xi}^\varepsilon_\kappa\|_{\vec{H}^1_0(\omega)}^2\\
&\qquad+\dfrac{\varepsilon^3}{\kappa}\|\zeta^\varepsilon_{3,\kappa}-\Pi_h\zeta^\varepsilon_{3,\kappa}\|_{\vec{H}^1_0(\omega)}^2
+\dfrac{\varepsilon^3}{2\kappa}\|\bm{\zeta}^\varepsilon_\kappa-\bm{\Pi}_h\bm{\zeta}^\varepsilon_\kappa\|_{\bm{L}^2(\omega)}^2\\
&\le\left[c_1(\lambda,\mu)C+2\right]\dfrac{\varepsilon}{\kappa}\|\bm{\zeta}^\varepsilon_\kappa-\bm{\Pi}_h\bm{\zeta}^\varepsilon_\kappa\|_{\bm{H}^1_0(\omega)}^2
+\left[2c_0(\lambda,\mu)+c_1(\lambda,\mu)C+1+\dfrac{2\mu}{3}\right]\dfrac{\varepsilon}{\kappa}\|\vec{\xi}^\varepsilon_\kappa-\vec{\Pi}_h\vec{\xi}^\varepsilon_\kappa\|_{\vec{H}^1_0(\omega)}^2\\
&\le\left[2c_0(\lambda,\mu)+c_1(\lambda,\mu)C+2+\dfrac{2\mu}{3}\right]\dfrac{\varepsilon}{\kappa}\left(\|\bm{\zeta}^\varepsilon_\kappa-\bm{\Pi}_h\bm{\zeta}^\varepsilon_\kappa\|_{\bm{H}^1_0(\omega)}^2+\|\vec{\xi}^\varepsilon_\kappa-\vec{\Pi}_h\vec{\xi}^\varepsilon_\kappa\|_{\vec{H}^1_0(\omega)}^2\right)\\
&\le \left[2c_0(\lambda,\mu)+c_1(\lambda,\mu)C+2+\dfrac{2\mu}{3}\right]\dfrac{C h^2\varepsilon}{\kappa}\left( |\bm{\zeta}^\varepsilon_\kappa|_{\bm{H}^2(\omega)}^2+|\vec{\xi}^\varepsilon_\kappa|_{\vec{H}^2(\omega)}^2\right)\\
&\le \left[2c_0(\lambda,\mu)+c_1(\lambda,\mu)C+2+\dfrac{2\mu}{3}\right]\dfrac{C \hat{C}_\varepsilon^2 h^2\varepsilon^3}{\kappa^2},
\end{aligned}
\end{equation*}
where $C>0$ is a positive constant which only depends on $\theta$ and $\omega$, and the second-last inequality descends from an application of Theorem~3.1-6 of~\cite{PGCFEM}, and the last inequality descends from the standard augmentation-of-regularity results (cf., e.g., \cite{Nec67}) and the de-scalings~\eqref{descalings-1}. Once again, since the variational problems we are interested in were derived as a result of a de-scaling for the unknowns~\eqref{descalings-1}, we have that the constant $\hat{C}_\varepsilon>0$ is the one appearing in Theorem~\ref{th:shallow-reg}.

The completion of the proof follows by observing that the latter chain of inequalities gives the sought estimate:
\begin{equation*}
\|\bm{\zeta}^\varepsilon_\kappa-\bm{\zeta}^{\varepsilon,h}_\kappa\|_{\bm{H}^1_0(\omega)}+\|\vec{\xi}^\varepsilon_\kappa-\xi^{\varepsilon,h}_\kappa\|_{\vec{H}^1_0(\omega)}\le 2\varepsilon^3\sqrt{3C}\left(2c_0(\lambda,\mu)+c_1(\lambda,\mu)C+2+\dfrac{2\mu}{3}\right)^{1/2}\dfrac{\hat{C}_\varepsilon \max\{c_P,C_K\} h}{\min\{1,\sqrt{2\mu}\}\kappa}.
\end{equation*}
\end{proof}

In particular, we observe that the coupling $\kappa = h^q$ with $0<q<1$ ensures the convergence of the solution of Problem~\ref{problem12} to the solution of Problem~\ref{problem5} as $h\to 0^+$.

\section{Numerical experiments for the biharmonic obstacle problem}
\label{sec1-bis}

In this last section of the paper, we implement numerical simulations aiming to validate the theoretical results presented in section~\ref{sec1}.
Consider as a domain a circle of radius $r_A:=1.0$, and denote one such domain by $\omega$:
\begin{equation*}
	\omega:=\left\{y=(y_\alpha)\in \mathbb{R}^2;\sqrt{y_1^2+y_2^2}<r_A\right\}.
\end{equation*}

We constrain solution $u$ of Problem~\ref{problem1} to remain confined above the function $\theta$ defined by:

\begin{equation*}
	\theta(y):=-1.0,\quad\textup{ for all } y\in\overline{\omega}.
\end{equation*}

The numerical simulations are performed by means of the software FEniCSx~\cite{Fenics2016} version~0.9.0, and the visualization is performed by means of the software ParaView~\cite{Ahrens2005}. The plots were created by means of the \verb*|matplotlib| libraries from a Python~3.9.8 installation.

The first batch of numerical experiments is meant to validate the claim of Theorem~\ref{th:biharmonic-strong}.  After fixing the mesh size $0<h<<1$, we let $\kappa$ tend to zero in Problem~\ref{problem4}. Let $(u_{\kappa_1}^h,\vec{\xi}_{\kappa_1}^h)$ and $(u_{\kappa_2}^h,\vec{\xi}_{\kappa_2}^h)$ be two solutions of Problem~\ref{problem4}, where $\kappa_2:=2\kappa_1$ and $\kappa_1>0$.
The solution of Problem~\ref{problem3} is discretised by Courant triangles (cf., e.g., \cite{PGCFEM}) and homogeneous Dirichlet boundary conditions are imposed for all the components.
At each iteration, Problem~\ref{problem4} is solved by Newton's method. The algorithm stops when the error residual with respect to the standard norm of $H^1_0(\omega)$ is smaller than $1.0 \times 10^{-8}$. We observe that the error remains bounded below $C\sqrt{\kappa}$, where $C=0.3$. The forcing term $f$ entering the first experiment is given by:
$$
f(y):=
\begin{cases}
	-(-7.5 y_1^2-7.5 y_2^2+0.295) &,\textup{ if } |y|^2< 0.060,\\
	0&,\textup{otherwise}.
\end{cases}
$$

The results for the first batch of experiments is presented below.

\begin{figure}[H]
	\centering
	\subfloat[Error convergence.]{
		\begin{tabular}{lcr}
\toprule
$\kappa$ && Error\\
\midrule
5.96E-9&&2.65E-6\\
2.98E-9&&1.33E-6\\
1.49E-9&&6.63E-7\\
7.45E-10&&3.31E-7\\
3.73E-10&&1.66E-7\\
1.86E-10&&8.28E-8\\
9.31E-11&&4.14E-8\\
4.66E-11&&2.07E-8\\
2.33E-11&&1.04E-8\\
1.16E-11&&5.18E-9\\
\bottomrule
\end{tabular}

	}
	\hspace{1.5cm}
	\begin{subfigure}[t]{0.52\linewidth}
		\includegraphics[width=1.0\linewidth]{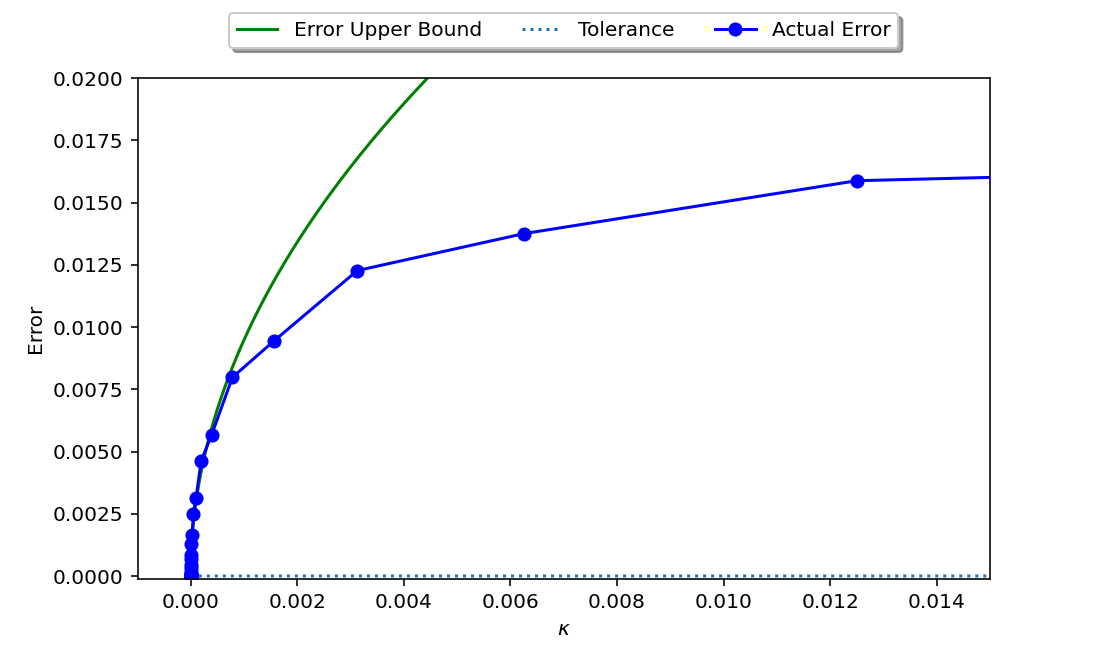}
		\subcaption{Error convergence as $\kappa \to 0^+$ for $h=0.0625$.}
	\end{subfigure}%
\end{figure}

\begin{figure}[H]
	\ContinuedFloat
	\centering
	\subfloat[Error convergence.]{
		\begin{tabular}{lcr}
\toprule
$\kappa$ && Error\\
\midrule
2.98E-9&&5.02E-6\\
1.49E-9&&2.51E-6\\
7.45E-10&&1.25E-6\\
3.73E-10&&6.27E-7\\
1.86E-10&&3.14E-7\\
9.31E-11&&1.57E-7\\
4.66E-11&&7.84E-8\\
2.33E-11&&3.92E-8\\
1.16E-11&&1.96E-8\\
5.82E-12&&9.80E-9\\
\bottomrule
\end{tabular}

	}
	\hspace{1.5cm}
	\begin{subfigure}[t]{0.52\linewidth}
		\includegraphics[width=1.0\linewidth]{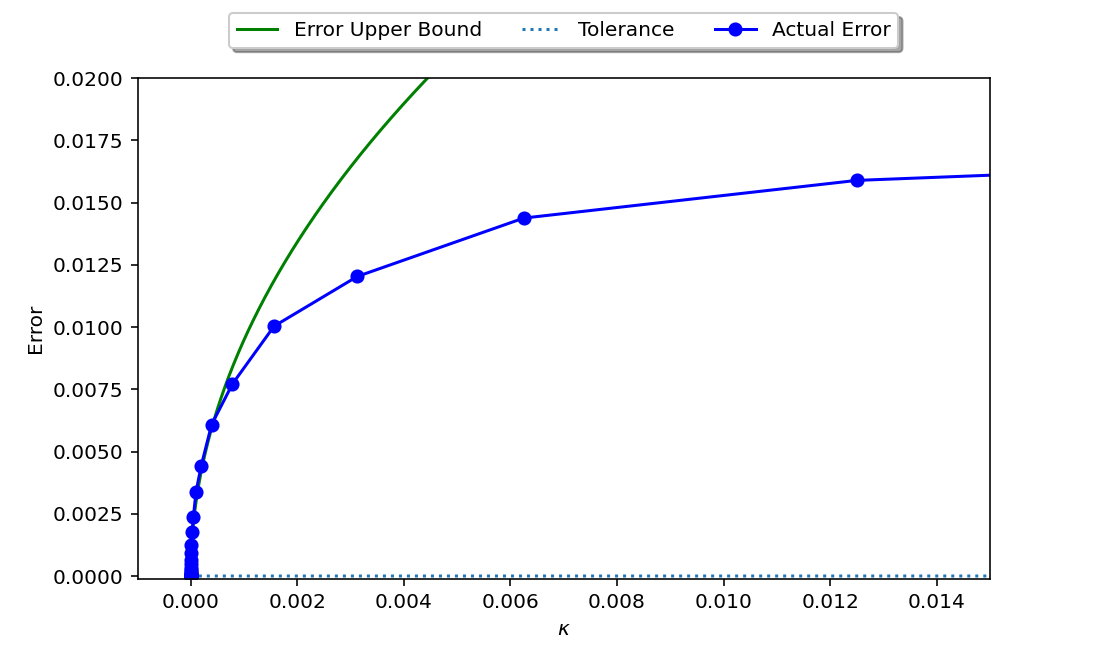}
		\subcaption{Error convergence as $\kappa \to 0^+$ for $h=0.03125$.}
	\end{subfigure}%
\end{figure}

\begin{figure}[H]
	\ContinuedFloat
	\centering
	\subfloat[Error convergence.]{
		\begin{tabular}{lcr}
\toprule
$\kappa$ && Error\\
\midrule
1.86E-10&&4.96E-6\\
9.31E-11&&2.48E-6\\
4.66E-11&&1.24E-6\\
2.33E-11&&6.19E-7\\
1.16E-11&&3.10E-7\\
5.82E-12&&1.55E-7\\
2.91E-12&&7.74E-8\\
1.46E-12&&3.87E-8\\
7.28E-13&&1.94E-8\\
3.64E-13&&9.68E-9\\
\bottomrule
\end{tabular}

	}
	\hspace{1.5cm}
	\begin{subfigure}[t]{0.52\linewidth}
		\includegraphics[width=1.0\linewidth]{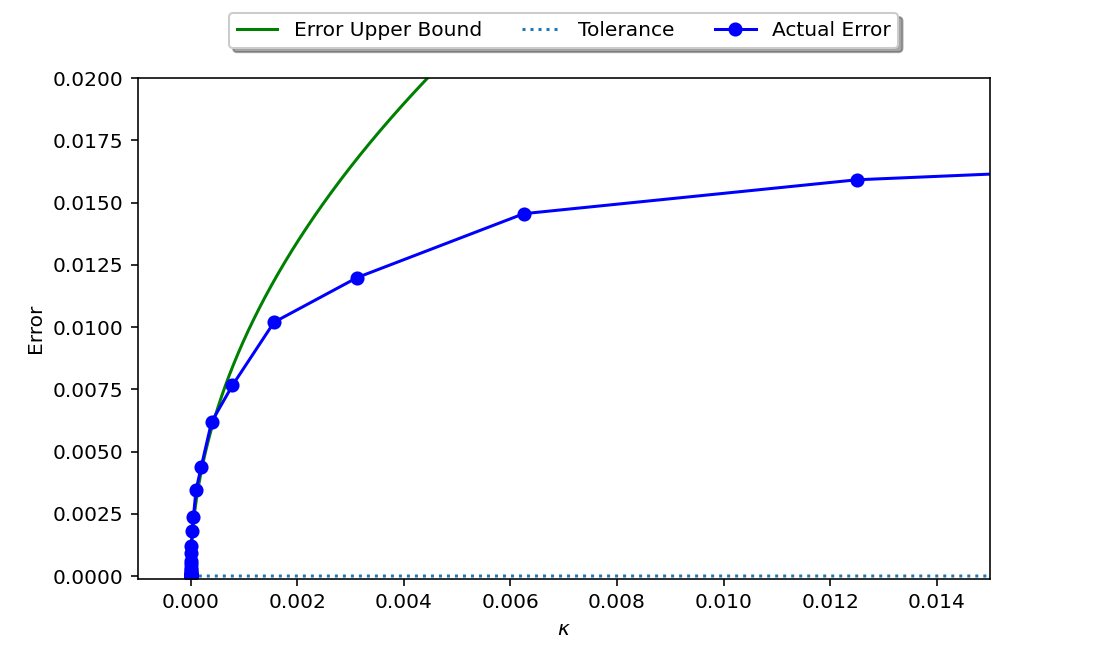}
		\subcaption{Error convergence as $\kappa \to 0^+$ for $h=0.0078125$.}
	\end{subfigure}%
\end{figure}

\begin{figure}[H]
	\ContinuedFloat
	\centering
	\subfloat[Error convergence.]{
		\begin{tabular}{lcr}
\toprule
$\kappa$ && Error\\
\midrule
1.16E-11&&1.22E-6\\
5.82E-12&&9.50E-7\\
2.91E-12&&6.12E-7\\
1.46E-12&&4.75E-7\\
7.28E-13&&3.06E-7\\
3.64E-13&&1.53E-7\\
1.82E-13&&7.65E-8\\
9.09E-14&&3.82E-8\\
4.55E-14&&1.91E-8\\
2.27E-14&&9.56E-9\\
\bottomrule
\end{tabular}

	}
	\hspace{1.5cm}
	\begin{subfigure}[t]{0.52\linewidth}
		\includegraphics[width=1.0\linewidth]{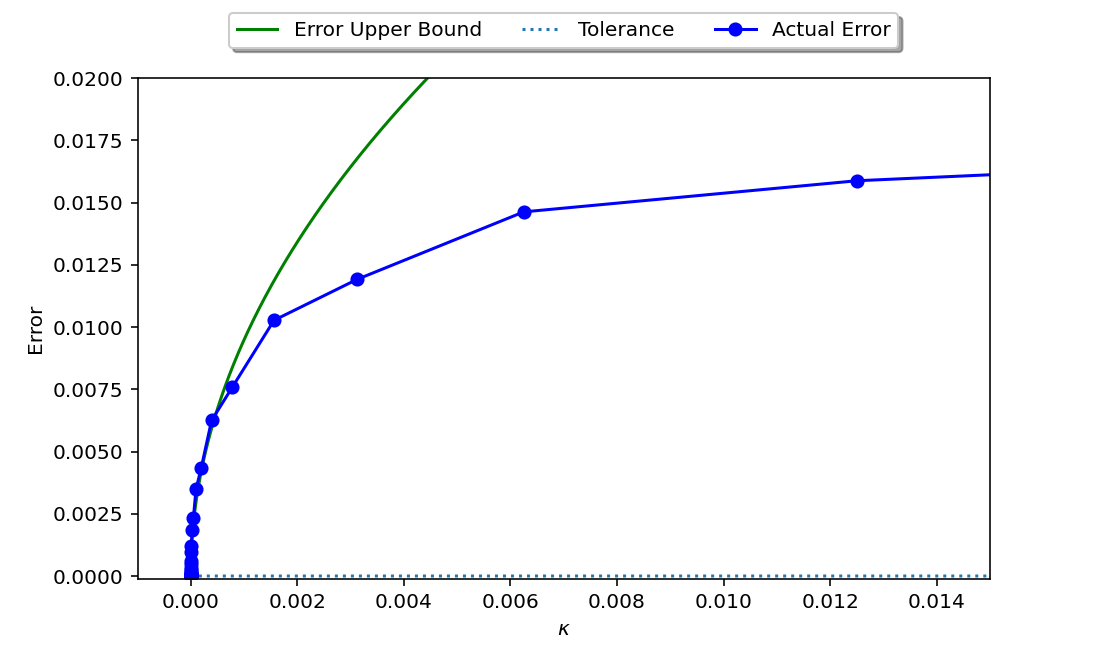}
		\subcaption{Error convergence as $\kappa \to 0^+$ for $h=0.001953125$.}
	\end{subfigure}%
	\caption{According to Theorem~\ref{th:biharmonic-strong}, given $0<h<<1$, the first component of the solution $(u_\kappa,\vec{\xi}_\kappa)$ of Problem~\ref{problem4} converges with respect to the standard norm of $H^1_0(\omega)$ as $\kappa\to 0^+$. In particular, we verify that $\{u_\kappa\}_{\kappa>0}$ is a Cauchy sequence in $H^1_0(\omega)$.}
	\label{fig:1bis}
\end{figure}

From the data patterns in Figure~\ref{fig:1bis} we observe that, for a given mesh size $h$, the solution $(u_\kappa^h,\vec{\xi}_\kappa^h)$ of Problem~\ref{problem4} converges as $\kappa \to 0^+$. This is coherent with the conclusion of Theorem~\ref{th:1}.

The second batch of numerical experiments verifies that the numerical method here proposed is not affected by the locking phenomenon.
In order to show that the numerical method is not affected by locking, we manufacture a solution $u_{\textup{exact}}\in H^2_0(\omega)$ for Problem~\ref{problem1} and we show that, for a fixed $0< q <1$ as in Theorem~\ref{th:3}, the solution $u_\kappa^h$ of Problem~\ref{problem4} converges to $u_{\textup{exact}}$ with respect to the standard $H^1_0(\omega)$ norm. The expression for the function $u_{\textup{exact}}$ is given by:
\begin{equation*}
	u_{\textup{exact}}(y):=
	\begin{cases}
		-1&, \textup{ if } \sqrt{y_1^2+y_2^2}<\dfrac{1}{2},\\
		\\
		4-24\sqrt{y_1^2+y_2^2}+36(y_1^2+y_2^2)-16(y_1^2+y_2^2)^{3/2}&, \textup{ if } \dfrac{1}{2}\le\sqrt{y_1^2+y_2^2}<1.
	\end{cases}
\end{equation*}

Note that the function $u_{\textup{exact}}$ is radial. For sake of clarity, we sketch the graph of one of the profiles, denoted by $\phi(r)$, and of its derivative $\phi'(r)$.

\begin{figure}[H]
	\centering
	\begin{subfigure}[t]{0.30\linewidth}
		\includegraphics[width=1.0\linewidth]{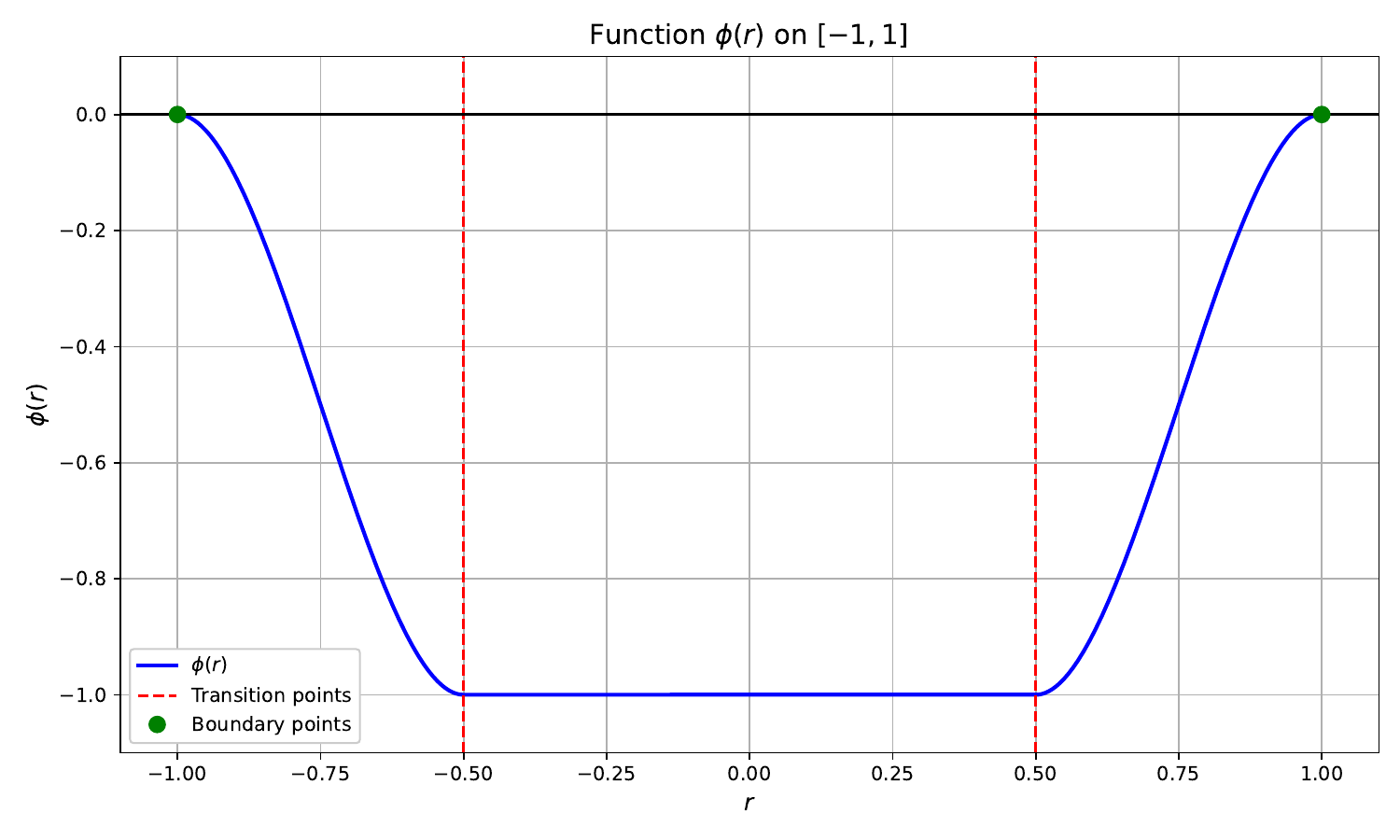}
		\subcaption{Graph of $\phi(r)$.}
	\end{subfigure}%
	\hspace{0.5cm}
	\begin{subfigure}[t]{0.30\linewidth}
		\includegraphics[width=1.0\linewidth]{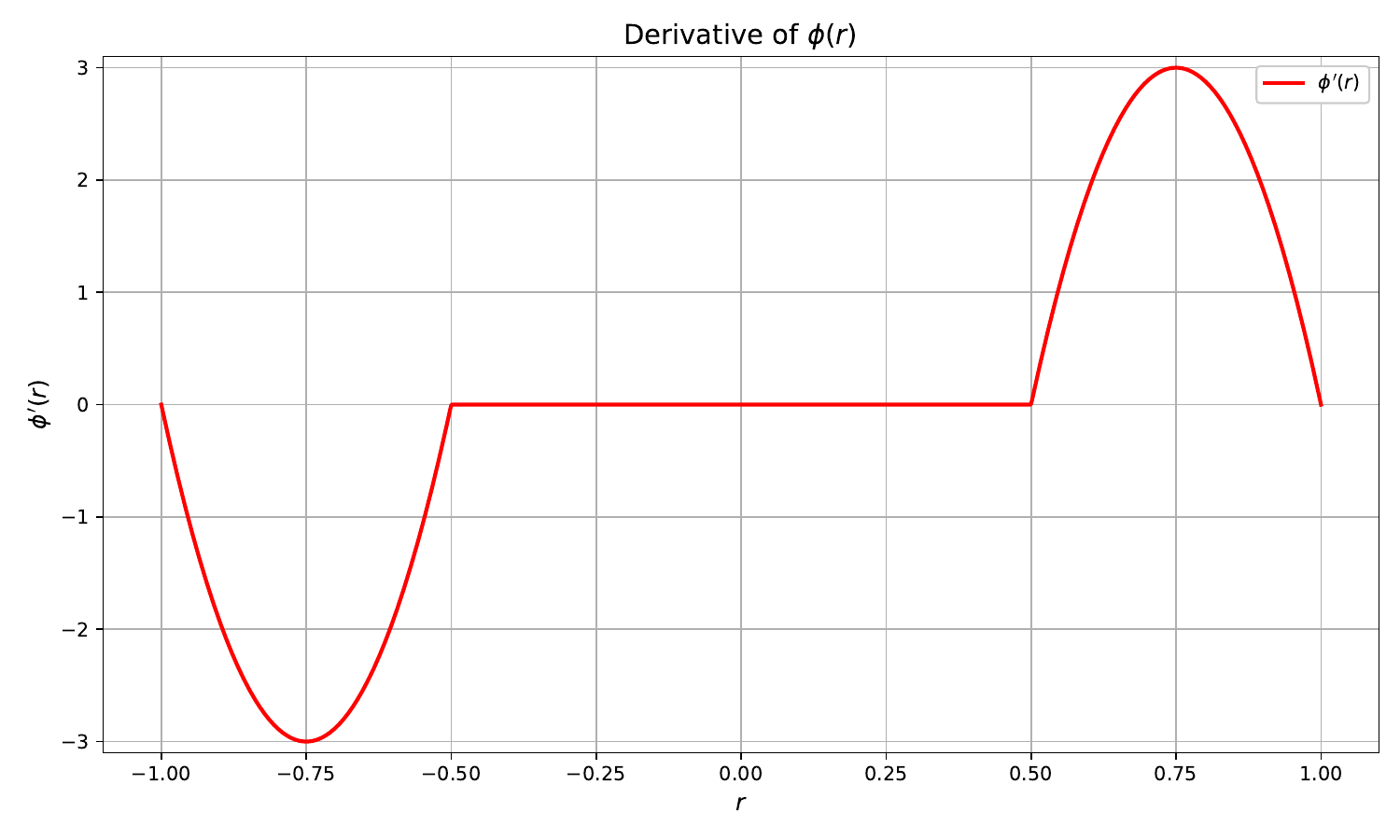}
		\subcaption{Graph of $\phi'(r)$.}
	\end{subfigure}%
	\hspace{0.5cm}
	\begin{subfigure}[t]{0.30\linewidth}
		\includegraphics[width=1.0\linewidth]{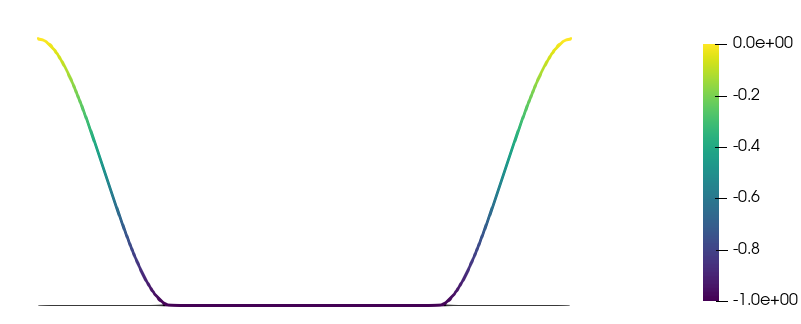}
		\subcaption{Cross section of the numerical solution.}
	\end{subfigure}%
	\caption{The manufactured solution $u_{\textup{exact}}$ enjoys radial symmetry and is of class $H^2_0(\omega)$. The numerical solution is very close to the manufactured solution.}
	\label{fig:uexact}
\end{figure}

In order to allow the non-linearity associated with the presence of the obstacle to effectively enter the numerical computations, we perturb the vector field $\vec{F}=-\nabla(\Delta u_{\textup{exact}})$ constituting the \emph{unique} forcing term associated with the manufactured solution $u_{\textup{exact}}$ by a term of order $\mathcal{O}(h)$. The intervention of the non-linearity in the model is observed by the Newton solver requiring more than one iteration to complete. The results of these experiments are reported in Figures~\ref{fig:2ter} below.

\begin{figure}[H]
	\centering
	\subfloat[Error convergence for $q=0.5$.]{
		\begin{tabular}{lcr}
\toprule
$h$ && Error\\
\midrule
4.59E-3&&7.94E-4\\
4.29E-3&&7.58E-4\\
4.03E-3&&7.25E-4\\
3.80E-3&&6.93E-4\\
3.60E-3&&6.65E-4\\
3.41E-3&&6.39E-4\\
3.25E-3&&6.16E-4\\
3.10E-3&&5.93E-4\\
2.96E-3&&5.73E-4\\
2.83E-3&&5.53E-4\\
\bottomrule
\end{tabular}

	}
	\hspace{0.5cm}
	\begin{subfigure}[t]{0.52\linewidth}
		\includegraphics[width=1.0\linewidth]{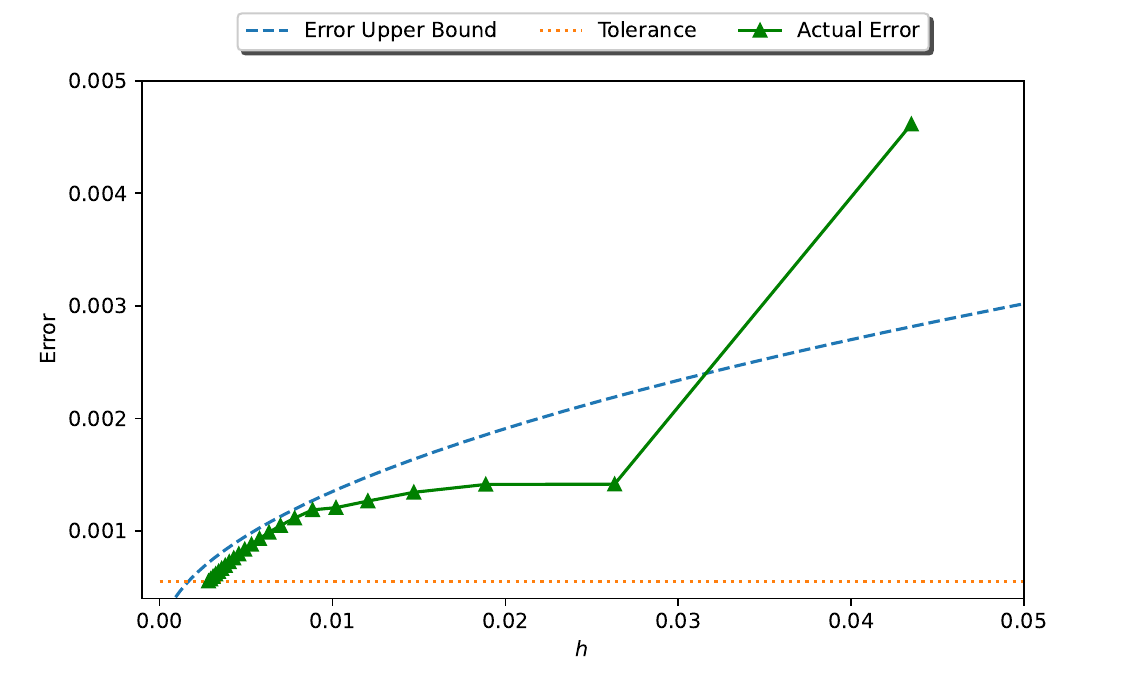}
		\subcaption{For $q=0.5$ the error is of the order $h^{1/2}$ up to a multiplicative constant.}
	\end{subfigure}%
\end{figure}

\begin{figure}[H]
	\centering
	\subfloat[Error convergence for $q=0.6$.]{
		\begin{tabular}{lcr}
\toprule
$h$ && Error\\
\midrule
4.59E-3&&3.27E-4\\
4.29E-3&&3.13E-4\\
4.03E-3&&2.99E-4\\
3.80E-3&&2.86E-4\\
3.60E-3&&2.74E-4\\
3.41E-3&&2.63E-4\\
3.25E-3&&2.53E-4\\
3.10E-3&&2.44E-4\\
2.96E-3&&2.35E-4\\
2.83E-3&&2.26E-4\\
\bottomrule
\end{tabular}

	}
	\hspace{0.5cm}
	\begin{subfigure}[t]{0.52\linewidth}
		\includegraphics[width=1.0\linewidth]{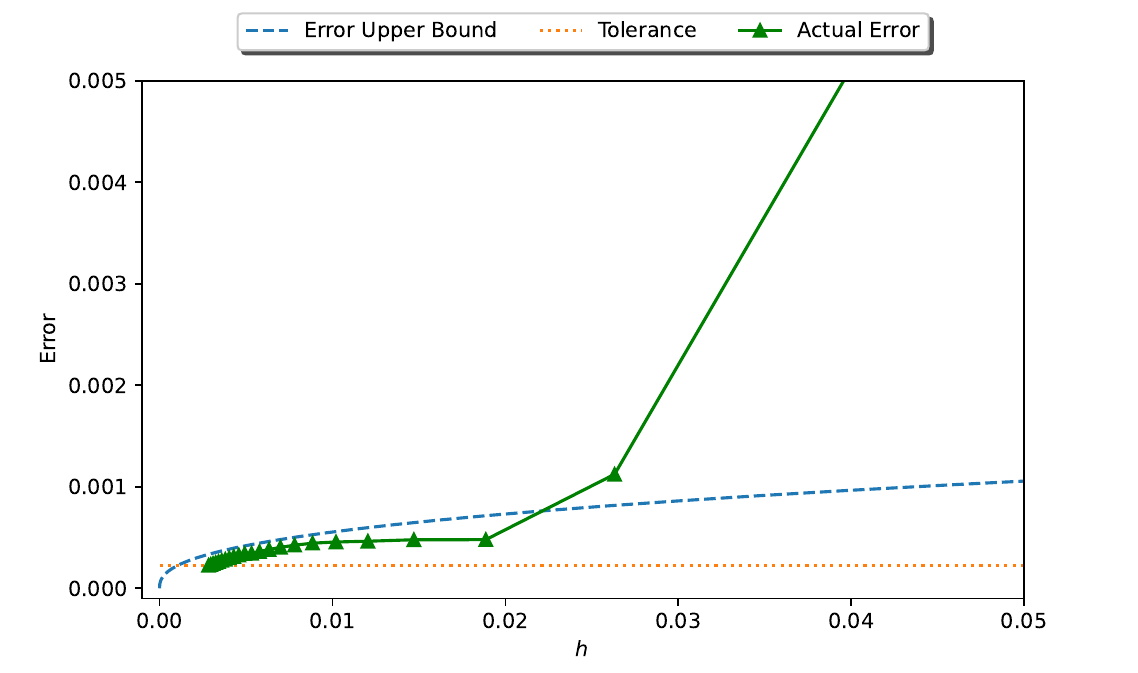}
		\subcaption{For $q=0.6$ the error is of the order $h^{2/5}$ up to a multiplicative constant.}
	\end{subfigure}%
	\caption{Given $0<q<1$ as in Theorem~\ref{th:3}, the error $\|u_{h^{q}}^{h}-u_{\textup{exact}}\|_{H^1_0(\omega)}$ converges to zero as $h\to0^+$ with order of convergence $\mathcal{O}(h^{1-q})$.}
	\label{fig:2ter}
\end{figure}

For the second batch of experiments, we also considered the case where the obstacle is non-flat. To study this case, we consider Example~1 in~\cite{Brenner2013}. In this case, the domain $\omega$ is the circle with radius $r_A:=2.0$ and the obstacle function is given by:
\begin{equation*}
	\theta(y)=1-|y|^2,\quad\textup{ for all }y\in\overline{\omega}.
\end{equation*}

As a manufactured solution, we consider the function $u_{\textup{exact}}$ given by:
\begin{equation*}
	u_{\textup{exact}}(y):=
	\begin{cases}
		1-|y|^2&, \textup{ if } |y|\le 0.181345,\\
		\\
		0.525041|y|^2\ln|y|-0.628609|y|^2+0.017266\ln|y|+1.046746&, \textup{ if }0.181345<|y|\le 2.
	\end{cases}
\end{equation*}

Note that the function $u_{\textup{exact}}$ is radial. For sake of clarity, we sketch the graph of one of the profiles, denoted by $\phi(r)$, and of its derivative $\phi'(r)$.

\begin{figure}[H]
	\centering
	\begin{subfigure}[t]{0.30\linewidth}
		\includegraphics[width=1.0\linewidth]{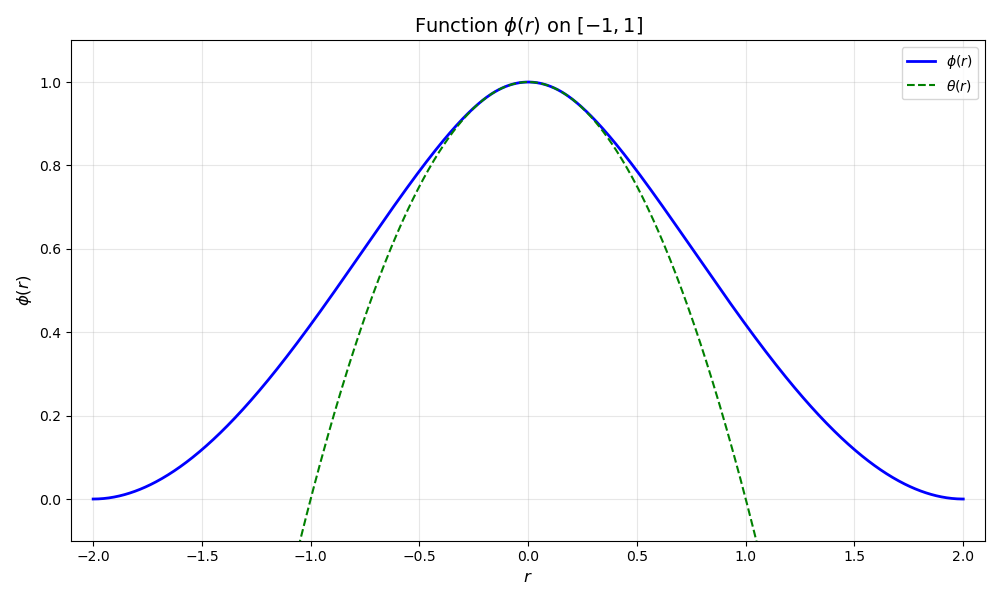}
		\subcaption{Graph of $\phi(r)$.}
	\end{subfigure}%
	\hspace{0.5cm}
	\begin{subfigure}[t]{0.30\linewidth}
		\includegraphics[width=1.0\linewidth]{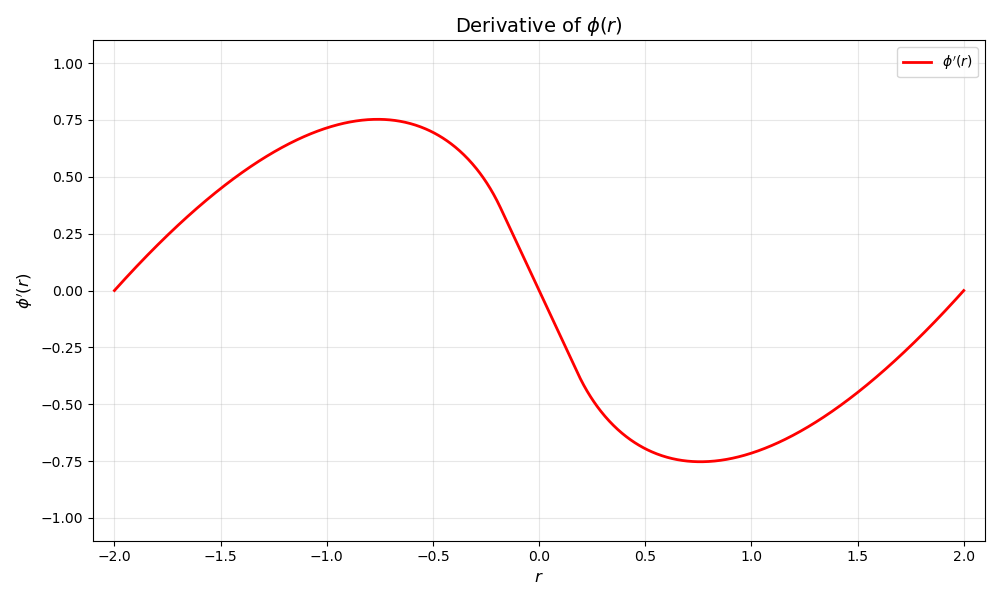}
		\subcaption{Graph of $\phi'(r)$.}
	\end{subfigure}%
	\hspace{0.5cm}
	\begin{subfigure}[t]{0.30\linewidth}
		\includegraphics[width=1.0\linewidth]{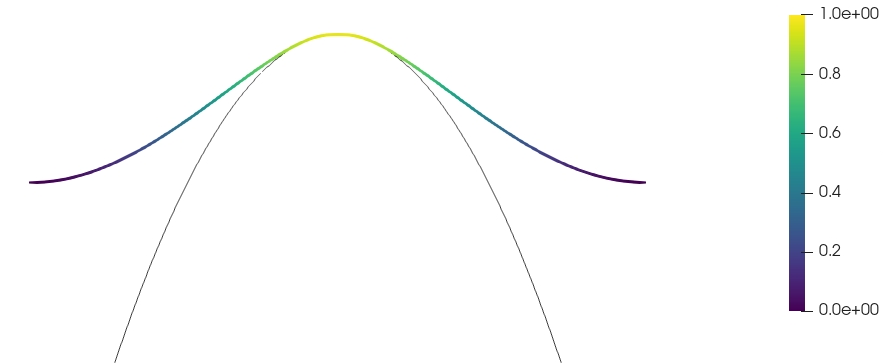}
		\subcaption{Cross section of the numerical solution.}
	\end{subfigure}%
	\caption{The manufactured solution $u_{\textup{exact}}$ enjoys radial symmetry and is of class $H^2_0(\omega)$. The numerical solution is very close to the manufactured solution.}
	\label{fig:uexact-2}
\end{figure}

In order to allow the non-linearity associated with the presence of the obstacle to effectively enter the numerical computations, we perturb the vector field $\vec{F}=-\nabla(\Delta u_{\textup{exact}})$ constituting the \emph{unique} forcing term associated with the manufactured solution $u_{\textup{exact}}$ by a term of order $\mathcal{O}(h)$. The intervention of the non-linearity in the model is observed by the Newton solver requiring more than one iteration to complete. The results of these experiments are reported in Figures~\ref{fig:2quater} below.

\begin{figure}[H]
	\centering
	\subfloat[Error convergence for $q=0.5$.]{
		\begin{tabular}{lcr}
\toprule
$h$ && Error\\
\midrule
5.67E-3&&1.03E-1\\
5.43E-3&&1.00E-1\\
5.22E-3&&9.82E-2\\
5.03E-3&&9.60E-2\\
4.84E-3&&9.41E-2\\
4.67E-3&&9.22E-2\\
4.51E-3&&9.04E-2\\
4.37E-3&&8.87E-2\\
4.23E-3&&8.71E-2\\
4.10E-3&&8.56E-2\\
\bottomrule
\end{tabular}
	}
	\hspace{0.5cm}
	\begin{subfigure}[t]{0.52\linewidth}
		\includegraphics[width=1.0\linewidth]{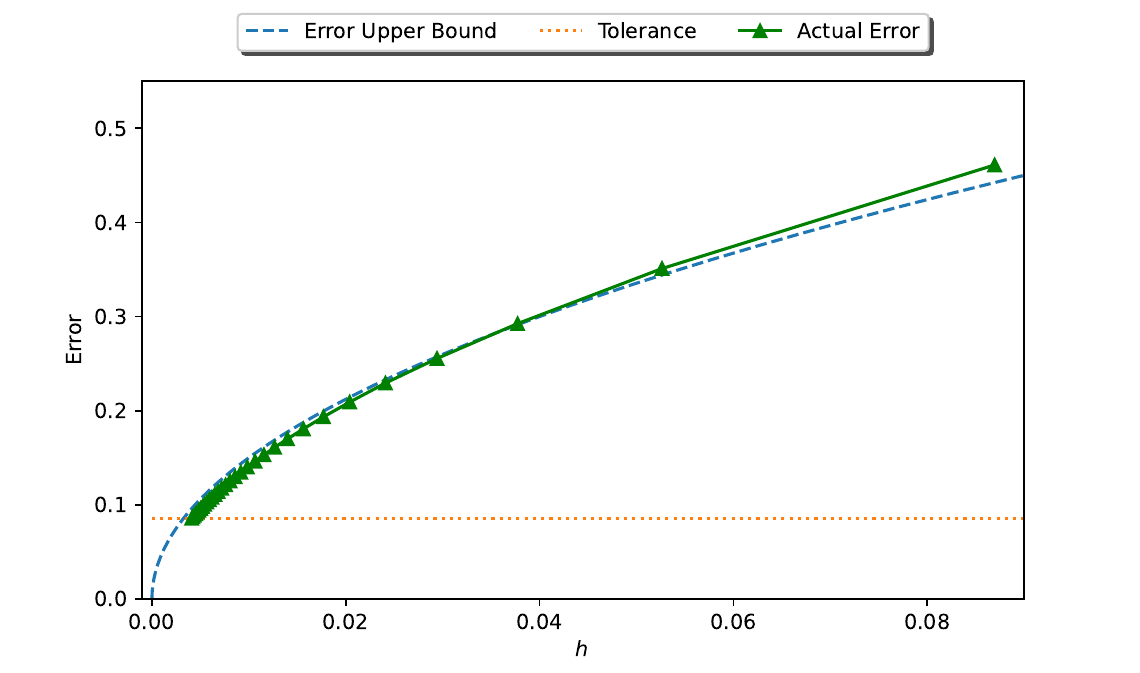}
		\subcaption{For $q=0.5$ the error is of the order $h^{1/2}$ up to a multiplicative constant.}
	\end{subfigure}%
\end{figure}

\begin{figure}[H]
	\centering
	\subfloat[Error convergence for $q=0.6$.]{
		\begin{tabular}{lcr}
\toprule
$h$ && Error\\
\midrule
2.94E-2&&1.79E-1\\
2.41E-2&&1.57E-1\\
2.04E-2&&1.40E-1\\
1.77E-2&&1.27E-1\\
1.56E-2&&1.17E-1\\
1.40E-2&&1.08E-1\\
1.27E-2&&1.01E-1\\
1.16E-2&&9.52E-2\\
1.06E-2&&8.99E-2\\
9.85E-3&&8.52E-2\\
\bottomrule
\end{tabular}
	}
	\hspace{0.5cm}
	\begin{subfigure}[t]{0.52\linewidth}
		\includegraphics[width=1.0\linewidth]{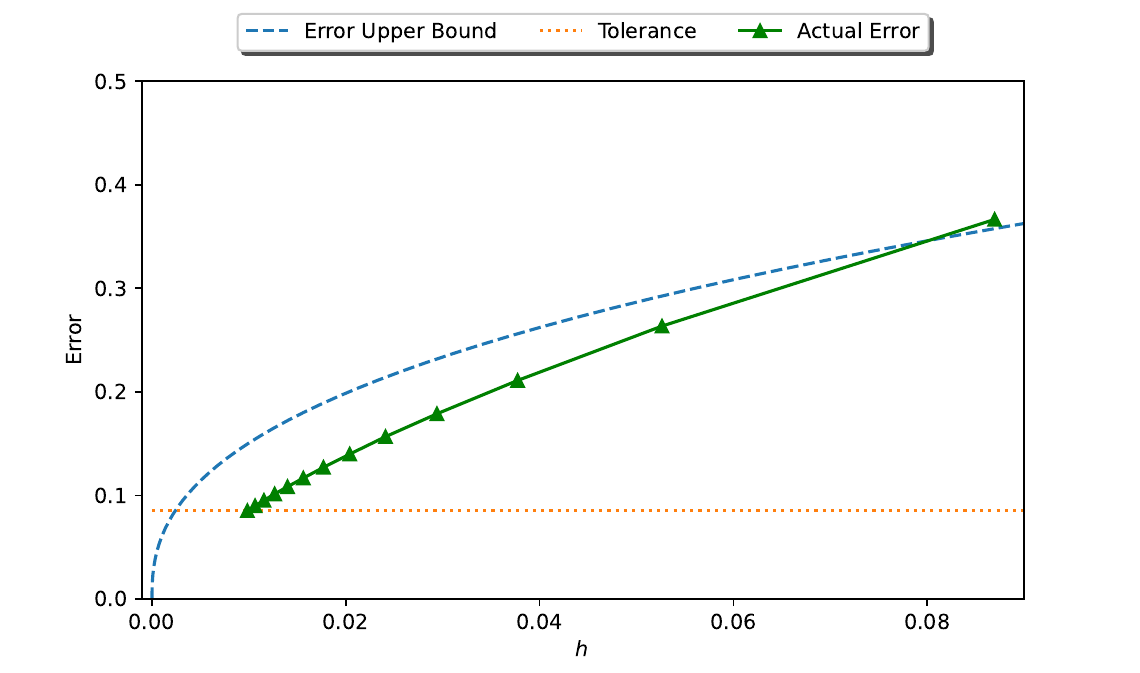}
		\subcaption{For $q=0.6$ the error is of the order $h^{2/5}$ up to a multiplicative constant.}
	\end{subfigure}%
	\caption{Given $0<q<1$ as in Theorem~\ref{th:3}, the error $\|u_{h^{q}}^{h}-u_{\textup{exact}}\|_{H^1_0(\omega)}$ converges to zero as $h\to0^+$ with order of convergence $\mathcal{O}(h^{1-q})$.}
	\label{fig:2quater}
\end{figure}

The third batch of numerical experiments validates the genuineness of the model from the qualitative point of view.
We observe that the presented data exhibits the pattern that, for a fixed $0<h<<1$ and a fixed $0< q <1$, the contact area increases as the applied body force intensity increases.
For this experiment, we constrain solution $u$ of Problem~\ref{problem1} to remain confined above the function $\theta$ defined by two planes:

\begin{equation*}
	\theta(y):=\begin{cases}
		\dfrac{y_1-1}{2},&\textup{ if } y_1\ge 0,\\
		\\
		\dfrac{-y_1-1}{2},&\textup{ if } y_1\le 0.
	\end{cases}
\end{equation*}

The applied body force density $f$ entering the model is given by
$$
f(y):=
\begin{cases}
	(0.25y_1^2+0.25 y_2^2-0.0059 \ell), &\textup{ if } |y|^2< 0.0059 \ell,\\
	0, &\textup{otherwise},
\end{cases}
$$
where $\ell$ is a non-negative integer.
The results of these experiments are reported in Figure~\ref{fig:4bis} below.

\begin{figure}[H]
	\centering
	\begin{subfigure}[t]{0.45\linewidth}
		\includegraphics[width=\linewidth]{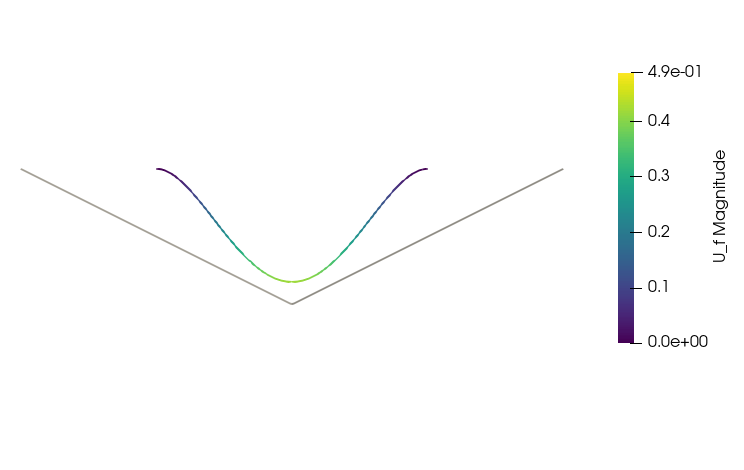}
		\subcaption{$\ell=600$}
	\end{subfigure}%
	\hspace{0.5cm}
	\begin{subfigure}[t]{0.45\linewidth}
		\includegraphics[width=1.0\linewidth]{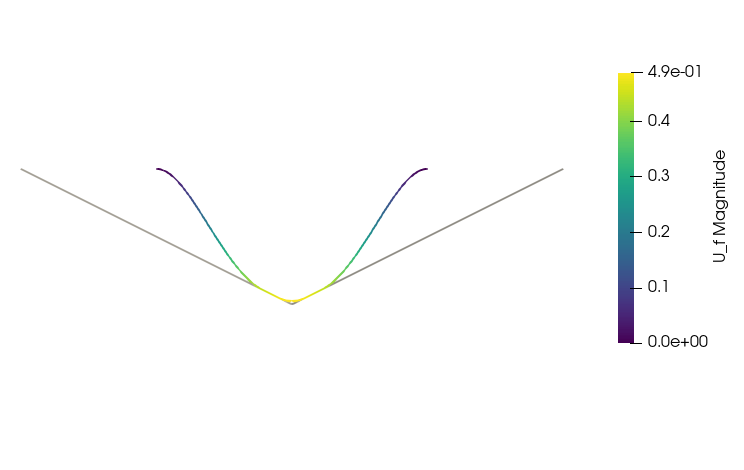}
		\subcaption{$\ell=750$}
	\end{subfigure}%
	\caption{Cross sections of a deformed membrane shell subjected not to cross a given convex obstacle.
		Given $0<h<<1$ and $0<q<1$ we observe that as the applied body force magnitude increases the contact area increases.}
	\label{fig:4bis}
\end{figure}

We refrain from presenting numerical experiments for the shallow shell model with flat surface (viz. Problem~\ref{problem11}) for the following reasons:
\begin{itemize}
	\item[$(1)$] If we consider the variational problem~\ref{problem11} in the case where the applied body force tangential components vanish and $\bm{q}=(0,0,1)$ then this model reduces to the biharmonic model (Problem~\ref{problem3}), for which experiments have been presented in Figures~\ref{fig:1bis}--\ref{fig:4bis};
	\item[$(2)$] The execution time for the second experiment for the biharmonic model becomes extensively long when the mesh becomes finer. Therefore, performing the second experiment for the shallow shell model with flat middle surface in the case of a general applied body force becomes challenging as it is not straightforward to exhibit a manufactured solution.
\end{itemize}

\addtocontents{toc}{\protect\setcounter{tocdepth}{1}}

\section*{Conclusions and Commentary}

In this paper, we proposed a new mixed Finite Element Method for approximating the solution of obstacle problems governed by fourth order differential operators. First, we presented the method for a prototypical obstacle problem for the biharmonic operator, and we performed numerical experiments to corroborate the theoretical results we established. The method we proposed generalises the results in the papers~\cite{CiaRav74} and~\cite{CiaGlo75}, as the finite-dimensional space where the approximation is carried out is - in our case - a subspace of the space where the mixed formulation is posed. We showed that, if $\kappa=h^q$ for $0<q<1$, then the solution of Problem~\ref{problem4} converges to the solution of Problem~\ref{problem1} as $h\to 0^+$ (Theorem~\ref{th:3} and Theorem~\ref{th:biharmonic-strong}).

Second, we moved to the study of the numerical approximation of an obstacle problem for linearly elastic shallow shells constrained to remain confined in a prescribed half-space.
We showed that, if $\kappa=h^q$ for $0<q<1$, then the solution of Problem~\ref{problem12} converges to the solution of Problem~\ref{problem6} as $h\to 0^+$ (Theorem~\ref{th:6} and Theorem~\ref{th:shallow-strong}) in the case where the middle surface is flat.
In order to establish the existence and uniqueness for the penalised mixed variational formulation related to the original problem, we had to establish a preliminary inequality of Korn's type in \emph{mixed coordinates}, whose validity hinges on the fact that the gradient matrix of the \emph{dual variable} is symmetric. In order to handle the latter constraint in the context of the numerical approximation of the solution by means of a Finite Element Method, we observed that there is a connection between the current analytical framework and the analytical framework of fluid mechanics; more precisely, the analytical framework of incompressible fluids. The numerical approximation by means of the Finite Element Method was successful, although the stream function had to be approximated by means of a conforming Finite Element for fourth order problems like, for instance, Hsieh-Clough-Tocher triangles. As a result of this, we have shown that there are mixed Finite Element Methods for fourth order problems that still necessitate $\mathcal{C}^1$ finite elements to perform the discretisation of the solution. Moreover, we observe that the latter fact appears to be related to the lack of rigidity of the middle surface of linearly elastic shallow shells, for which a two-dimensional rigidity theorem is not available, unlike the linearly elastic shells discussed in~\cite{Ciarlet2000}.

Third, and finally, we considered an example of linearly elastic shallow shell whose middle surface is flat. We observed that, in this case, the existence and uniqueness of solutions for the penalised mixed formulation descends from the standard Korn's inequality. This in turn implies that the symmetry of the gradient matrix of the \emph{dual variable} no longer has to be required for the competitors to the role of solution for the variational problem under consideration, and the discretisation of the penalised mixed formulation by Finite Elements can thus be performed by means of Courant triangles. Numerical tests are performed for corroborating the theoretical results we obtained beforehand.

Future directions of research might include the numerical analysis of contact problems by means of Discontinuous Galerkin Methods~\cite{Riv08}. In this case, the numerical simulations could potentially be carried out by resorting to the MATLAB library developed in~\cite{DPSY2023} for computing the elasticity coefficients entering the model.

\subsection*{Declarations}

\subsection*{Data Availability Statement}

Data sharing not applicable to this article as no datasets were generated or analysed during the current study.

\subsection*{Funding Statement}

This work was partly supported by the Research Fund of Indiana University.
P.P. was also partly supported by the University Development Fund of the Chinese University of Hong Kong, Shenzhen.

\subsection*{Conflict of Interest Disclosure}

All authors certify that they have no affiliations with or involvement in any organization or entity with any competing interests in the subject matter or materials discussed in this manuscript.

\subsection*{Ethics Approval Statement}

Not applicable.

\subsection*{Patient Consent Statement}

Not applicable.

\subsection*{Permission to Reproduce Material from other Sources}

None of the presented material was drawn from other sources.

\subsection*{Clinical Trial Registration}

Not applicable.

\subsection*{Authors' Contribution}

All the authors equally contributed to the realisation of each part of this manuscript.

\subsection*{Acknowledgements}

This paper was started during the time P.P. held a position as a Zorn Postdoctoral Fellow at Indiana University Bloomington. P.P. and T.S. are greatly thankful to Professor Roger M. Temam for the insightful discussions and advice.

\bibliographystyle{abbrvnat} 
\bibliography{references}

\addresseshere

\clearpage

\appendix

\newpage%
\renewcommand{\thesection}{\Alph{section}}

\section{Numerical experiments simulating the displacement of linearly elastic shallow shells subjected to an obstacle}
\label{sec4}

In this last section of the paper, we implement numerical simulations aiming to validate the theoretical results presented in section~\ref{sec3}.
Consider as a domain a circle of radius $r_A:=0.5$, and denote one such domain by $\omega$:
\begin{equation*}
	\omega:=\left\{y=(y_\alpha)\in \mathbb{R}^2;\sqrt{y_1^2+y_2^2}<r_A\right\}.
\end{equation*}

Throughout this section, the values of $\varepsilon$, $\lambda$ and $\mu$ are fixed once and for all as follows:
\begin{equation*}
	\begin{aligned}
		\varepsilon&=0.001,\\
		\lambda&=0.4,\\
		\mu&=0.012.
	\end{aligned}
\end{equation*}

The parametrisation for the middle surface of the linearly elastic shallow shell under consideration is given by:
\begin{equation}
	\label{middlesurf}
	\bm{\theta}^\varepsilon(y):=\left(y_1, y_2, 0.15\right),\quad\textup{ for all } y=(y_\alpha) \in \overline{\omega}.
\end{equation}

We define the unit-norm vector $\bm{q}$ orthogonal to the plane $\{y_3 = 0\}$ constituting the obstacle by $q:=(0, 0,1)$. The applied body force density $\bm{p}^\varepsilon=(p^{i,\varepsilon})$ entering the first two batches of experiments is given by $\bm{p}^\varepsilon=(0,0,g(y))$, where
$$
g(y):=
\begin{cases}
	-(-5.0 y_1^2-5.0 y_2^2+0.295), &\textup{ if } |y|^2< 0.060,\\
	0, &\textup{otherwise}.
\end{cases}
$$

The first batch of numerical experiments is meant to validate the convergences established in Theorem~\ref{th:shallow-strong}. After fixing the mesh size $0<h<<1$, we let $\kappa$ tend to zero in Problem~\ref{problem11}. Let $(\bm{\zeta}^{\varepsilon,h}_{\kappa_1},\vec{\xi}^{\varepsilon,h}_{\kappa_1})$ and $(\bm{\zeta}^{\varepsilon,h}_{\kappa_2},\vec{\xi}^{\varepsilon,h}_{\kappa_2})$ be two solutions of Problem~\ref{problem12}, where $\kappa_2:=2\kappa_1$ and $\kappa_1>0$.
The solution of Problem~\ref{problem11} is discretised component-wise by Courant triangles (cf., e.g., \cite{PGCFEM}) and homogeneous Dirichlet boundary conditions are imposed for all the components. 
At each iteration, Problem~\ref{problem12} is solved by Newton's method. The algorithm stops when the error residual with respect to the standard norm of $\bm{H}^1_0(\omega)$ is smaller than $1.0 \times 10^{-8}$. We verify that the error remains bounded below $C\sqrt{\kappa}$, where $C=4$.

\begin{figure}[H]
	\centering
	\subfloat[Error convergence.]{
		\begin{tabular}{lcr}
\toprule
$\kappa$ && Error\\
\midrule
4.77E-8&&4.24E-6\\
2.38E-8&&2.12E-6\\
1.19E-8&&1.06E-6\\
5.96E-9&&5.30E-7\\
2.98E-9&&2.65E-7\\
1.49E-9&&1.33E-7\\
7.45E-10&&6.63E-8\\
3.73E-10&&3.31E-8\\
1.86E-10&&1.66E-8\\
9.31E-11&&8.28E-9\\
\bottomrule
\end{tabular}

	}
	\hspace{1.5cm}
	\begin{subfigure}[t]{0.52\linewidth}
		\includegraphics[width=1.0\linewidth]{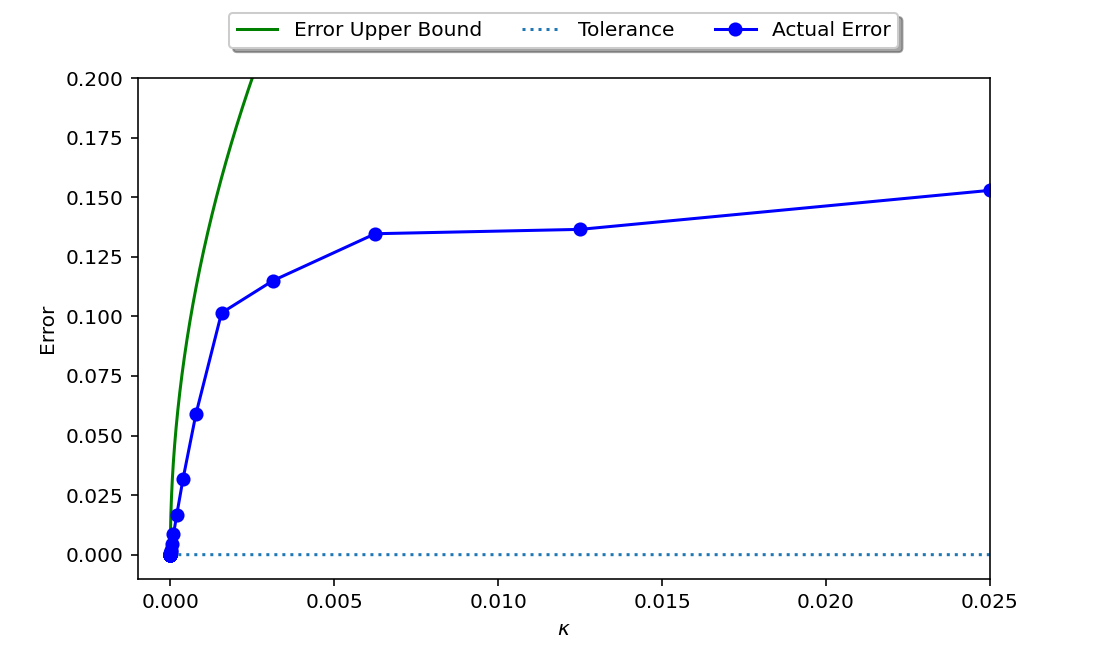}
		\subcaption{Error convergence as $\kappa \to 0^+$ for $h=0.0625$.}
	\end{subfigure}%
	
\end{figure}

\begin{figure}[H]
	\ContinuedFloat
	\centering
	\subfloat[Error convergence.]{
		\begin{tabular}{lcr}
\toprule
$\kappa$ && Error\\
\midrule
1.19E-8&&4.02E-6\\
5.96E-9&&2.01E-6\\
2.98E-9&&1.00E-6\\
1.49E-9&&5.02E-7\\
7.45E-10&&2.51E-7\\
3.73E-10&&1.25E-7\\
1.86E-10&&6.27E-8\\
9.31E-11&&3.14E-8\\
4.66E-11&&1.57E-8\\
2.33E-11&&7.84E-9\\
\bottomrule
\end{tabular}

	}
	\hspace{1.5cm}
	\begin{subfigure}[t]{0.52\linewidth}
		\includegraphics[width=1.0\linewidth]{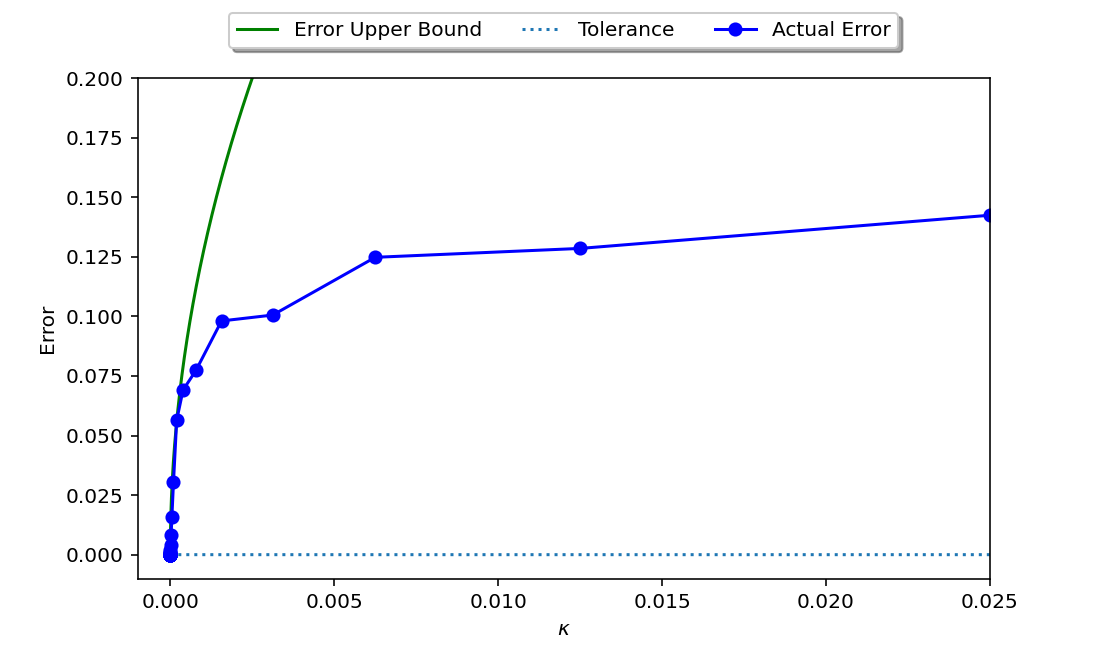}
		\subcaption{Error convergence as $\kappa \to 0^+$ for $h=0.03125$.}
	\end{subfigure}%
\end{figure}

\begin{figure}[H]
	\ContinuedFloat
	\centering
	\subfloat[Error convergence.]{
		\begin{tabular}{lcr}
\toprule
$\kappa$ && Error\\
\midrule
2.98E-9&&4.02E-6\\
1.49E-9&&2.01E-6\\
7.45E-10&&1.00E-6\\
3.73E-10&&5.02E-7\\
1.86E-10&&2.51E-7\\
9.31E-11&&1.25E-7\\
4.66E-11&&6.27E-8\\
2.33E-11&&3.14E-8\\
1.16E-11&&1.57E-8\\
5.82E-12&&7.84E-9\\
\bottomrule
\end{tabular}

	}
	\hspace{1.5cm}
	\begin{subfigure}[t]{0.52\linewidth}
		\includegraphics[width=1.0\linewidth]{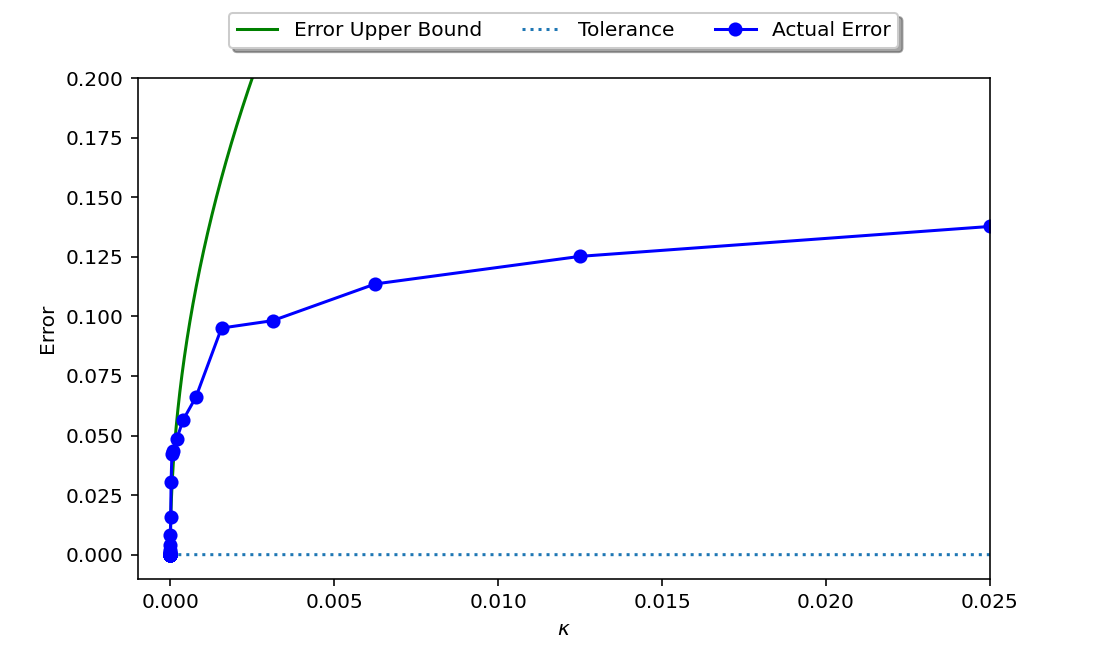}
		\subcaption{Error convergence as $\kappa \to 0^+$ for $h=0.015625$.}
	\end{subfigure}%
	
\end{figure}

\begin{figure}[H]
	\ContinuedFloat
	\centering
	\subfloat[Error convergence.]{
		\begin{tabular}{lcr}
\toprule
$\kappa$ && Error\\
\midrule
7.45E-10&&3.96E-6\\
3.73E-10&&1.98E-6\\
1.86E-10&&9.91E-7\\
9.31E-11&&4.96E-7\\
4.66E-11&&2.48E-7\\
2.33E-11&&1.24E-7\\
1.16E-11&&6.20E-8\\
5.82E-12&&3.10E-8\\
2.91E-12&&1.55E-8\\
1.46E-12&&7.74E-9\\
\bottomrule
\end{tabular}

	}
	\hspace{1.5cm}
	\begin{subfigure}[t]{0.52\linewidth}
		\includegraphics[width=1.0\linewidth]{./figures/Shallow-Shells/Experiment1/Experiment-1-Shallow-Shells-res-32-Enlarged}
		\subcaption{Error convergence as $\kappa \to 0^+$ for $h=0.0078125$.}
	\end{subfigure}%
	\caption{According to Theorem~\ref{th:shallow-strong}, given $0<h<<1$, the first component of the solution $(\bm{\zeta}^{\varepsilon,h}_\kappa,\vec{\xi}^{\varepsilon,h}_\kappa)$ of Problem~\ref{problem12} converges with respect to the standard norm of $\bm{H}^1_0(\omega)$ as $\kappa\to0^+$. In particular, we verify that $\{\bm{\zeta}^{\varepsilon,h}_\kappa\}_{\kappa>0}$ is a Cauchy sequence in $\bm{H}^1_0(\omega)$.}
	\label{fig:1}
\end{figure}

From the data patterns in Figure~\ref{fig:1} we observe that, for a given mesh size $h$, the solution $(\bm{\zeta}^{\varepsilon,h}_\kappa,\vec{\xi}^{\varepsilon,h}_\kappa)$ of Problem~\ref{problem12} converges as $\kappa \to 0^+$.

The second batch of numerical experiments validates the genuineness of the model from the qualitative point of view.
We observe that the presented data exhibits the pattern that, for a fixed $0<h<<1$ and a fixed $0<q<1$, the contact area increases as the applied body force intensity increases.

For this experiment, we constrain the shell to remain confined in the convex portion of Euclidean space $\mathbb{E}^3$ identified by the planes $z=\pm x/2$. The unit-normal vectors in the orthogonal complement of these two planes are given by
\begin{equation*}
	\bm{q}_1:=\left(-\dfrac{\sqrt{5}}{5},0,\dfrac{2\sqrt{5}}{5}\right) \quad \textup{ and }\quad \bm{q}_2:=\left(\dfrac{\sqrt{5}}{5},0,\dfrac{2\sqrt{5}}{5}\right),
\end{equation*}
respectively.

The applied body force density $\bm{p}^\varepsilon=(p^{i,\varepsilon})$ entering the model is given by $\bm{p}^\varepsilon=(0,0,\varepsilon^3 g_\ell(y))$, where $\ell$ is a non-negative integer and
$$
g_\ell(y):=
\begin{cases}
	(0.5y_1^2+ 0.5y_2^2-0.0059 \ell), &\textup{ if } |y|^2< 0.0059 \ell,\\
	0, &\textup{otherwise},
\end{cases}
$$
where $\ell$ is a non-negative integer.
The results of these experiments are reported in Figure~\ref{fig:4} below.

\begin{figure}[H]
	\centering
	\begin{subfigure}[t]{0.45\linewidth}
		\includegraphics[width=\linewidth]{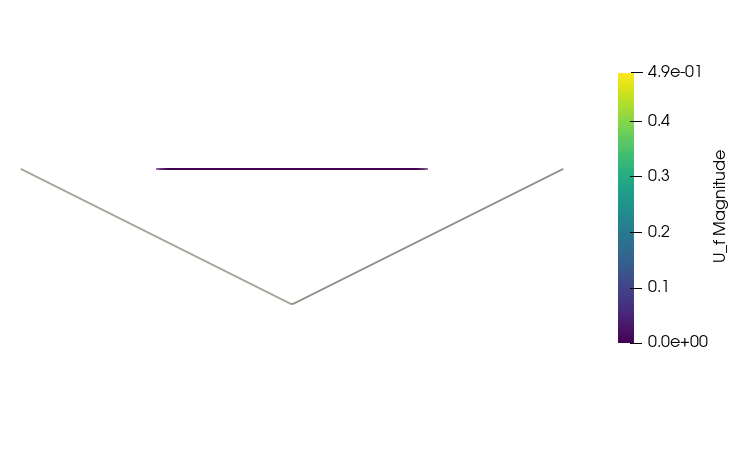}
		\subcaption{$\ell=0$}
	\end{subfigure}%
	\hspace{0.5cm}
	\begin{subfigure}[t]{0.45\linewidth}
		\includegraphics[width=1.0\linewidth]{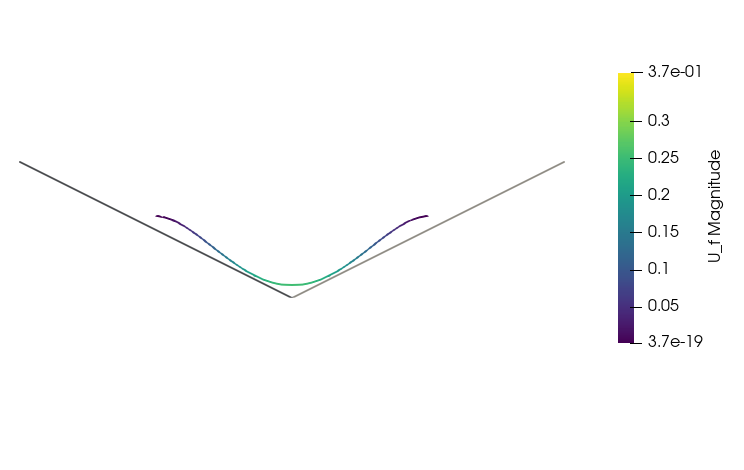}
		\subcaption{$\ell=2$}
	\end{subfigure}%
\end{figure}

\begin{figure}[H]
	\ContinuedFloat
	\centering
	\begin{subfigure}[t]{0.45\linewidth}
		\includegraphics[width=\linewidth]{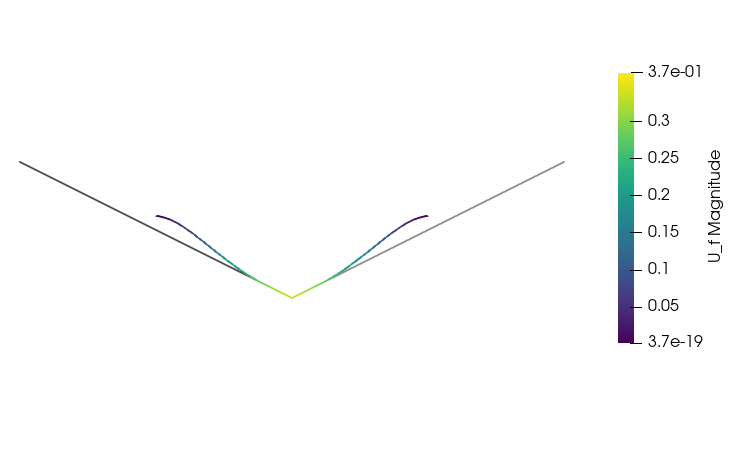}
		\subcaption{$\ell=20$}
	\end{subfigure}%
	\hspace{0.5cm}
	\begin{subfigure}[t]{0.45\linewidth}
		\includegraphics[width=1.0\linewidth]{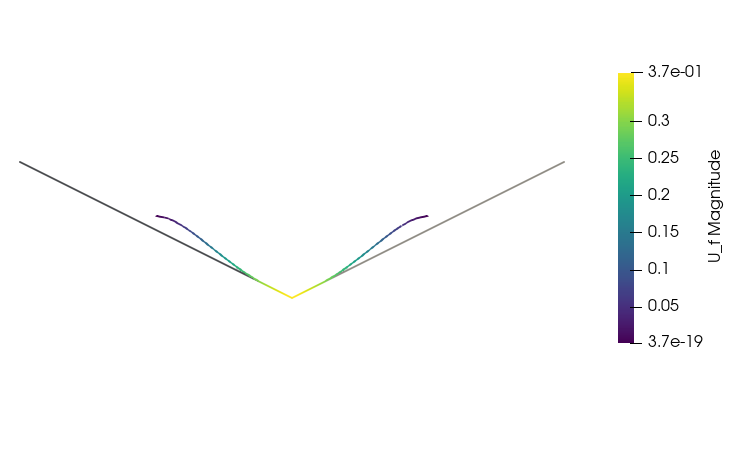}
		\subcaption{$\ell=39$}
	\end{subfigure}%
	\caption{Cross sections of a deformed membrane shell subjected not to cross a given convex obstacle.
		Given $0<h<<1$ and $0<q<1$ we observe that as the applied body force magnitude increases the contact area increases.}
	\label{fig:4}
\end{figure}

\end{document}